\newtheorem{theorem}{Theorem}[section]
\newtheorem{lemma}[theorem]{Lemma}
\newtheorem{corollary}[theorem]{Corollary}
\newtheorem{proposition}[theorem]{Proposition}
\theoremstyle{definition}
\newtheorem{remark}[theorem]{Remark}
\newtheorem{definition}[theorem]{Definition}
\numberwithin{equation}{section}
\begin{document}

\title{\bf\Large Brezis--Seeger--Van Schaftingen--Yung-Type Characterization of
Homogeneous Ball Banach Sobolev Spaces and Its Applications
\footnotetext{\hspace{-0.35cm} 2020
{\it Mathematics Subject Classification}.
Primary 46E35; Secondary 35A23, 42B25, 26D10.\endgraf
{\it Key words and phrases.}
ball Banach function space, ball Banach Sobolev space,
Bourgain--Brezis--Mironescu formula,
Brezis--Seeger--Van Schaftingen--Yung formula,
$(\varepsilon,\infty)$-domain.
\endgraf
This project is supported by the National
Natural Science Foundation of China
(Grant Nos. 12122102, 11971058 and 12071197)
and the National Key Research
and Development Program of China
(Grant No. 2020YFA0712900).}}
\date{}
\author{Chenfeng Zhu,
Dachun Yang\footnote{Corresponding author,
E-mail: \texttt{dcyang@bnu.edu.cn}/{\color{red}
August 1, 2023}/Final version.}
\ and Wen Yuan}

\maketitle

\vspace{-0.7cm}

\begin{center}
\begin{minipage}{13cm}
{\small {\bf Abstract}\quad
Let $\gamma\in\mathbb{R}\setminus\{0\}$
and $X(\mathbb{R}^n)$ be a ball Banach function space
satisfying some extra mild assumptions.
Assume that $\Omega=\mathbb{R}^n$ or
$\Omega\subset\mathbb{R}^n$ is an $(\varepsilon,\infty)$-domain
for some $\varepsilon\in(0,1]$.
In this article, the authors prove that
a function $f$ belongs to the
homogeneous ball Banach Sobolev
space $\dot{W}^{1,X}(\Omega)$
if and only if $f\in L_{\mathrm{loc}}^1(\Omega)$ and
$$
\sup_{\lambda\in(0,\infty)}\lambda
\left\|\left[\int_{\{y\in\Omega:\
|f(\cdot)-f(y)|>\lambda|\cdot-y|^{1+\frac{\gamma}{p}}\}}
\left|\cdot-y\right|^{\gamma-n}\,dy
\right]^\frac{1}{p}\right\|_{X(\Omega)}<\infty,
$$
where $p\in[1,\infty)$ is related to $X(\mathbb{R}^n)$.
This result is of wide generality and can be applied
to various specific Sobolev-type function spaces,
including Morrey [Bourgain--Morrey-type,
weighted (or mixed-norm or variable) Lebesgue,
local (or global) generalized Herz,
Lorentz, and Orlicz (or Orlicz-slice)] Sobolev spaces,
which is new even in all these special cases; in particular,
it coincides with the well-known result
of H. Brezis, A. Seeger, J. Van Schaftingen, and P.-L. Yung
when $X(\Omega):=L^q(\mathbb{R}^n)$
with $1<p=q<\infty$, while
it is still new even when
$X(\Omega):=L^q(\mathbb{R}^n)$ with $1\leq p<q<\infty$.
The novelty of this article exists in that,
to establish the characterization of $\dot{W}^{1,X}(\Omega)$,
the authors provide
a machinery via using
the generalized Brezis--Seeger--Van Schaftingen--Yung
formula on $X(\mathbb{R}^n)$,
the extension theorem on $\dot{W}^{1,X}(\Omega)$,
the Bourgain--Brezis--Mironescu-type characterization of
the inhomogeneous ball Banach Sobolev
space $W^{1,X}(\Omega)$,
and the method of extrapolation
to overcome those difficulties caused by that $X(\mathbb{R}^n)$
might be neither the rotation invariance
nor the translation invariance and that
the norm of $X(\mathbb{R}^n)$ has no explicit expression.
}
\end{minipage}
\end{center}

\vspace{0.2cm}

\tableofcontents

\vspace{0.2cm}

\section{Introduction}
Let $\Omega\subset\mathbb{R}^n$ be an open set.
The \emph{homogeneous fractional Sobolev space}
$\dot{W}^{s,p}(\Omega)$ with
$s\in(0,1)$ and $p\in[1,\infty)$,
introduced by Gagliardo \cite{g1958},
plays an important role in studying many problems arising in
both harmonic analysis and partial
differential equations (see, for instance,
\cite{bbm2002,crs2010,cv2011,npv2012,h2008,ht2008,
ma2011,m2011}),
which is defined to be the set of
all the measurable functions $f$
on $\Omega$ having the
following finite \emph{Gagliardo semi-norm}
\begin{align}\label{Gnorm}
\|f\|_{\dot{W}^{s,p}(\Omega)}&:=
\left\|\frac{f(x)-f(y)}{|x-y|^{s+\frac{n}{p}}}
\right\|_{L^p(\Omega\times\Omega)}\\
&:=\left[\iint_{\Omega\times\Omega}
\frac{|f(x)-f(y)|^p}{|x-y|^{sp+n}}\,dx
\,dy\right]^{\frac{1}{p}}.\nonumber
\end{align}
Recall that the \emph{homogeneous Sobolev space}
$\dot{W}^{1,p}(\Omega)$ with $p\in[1,\infty)$
is defined to be the
set of all the locally integrable functions $f$ on $\Omega$
having the finite \emph{homogeneous Sobolev semi-norm}
$\|f\|_{\dot{W}^{1,p}(\Omega)}
:=\|\,|\nabla f|\,\|_{L^p(\Omega)}$,
where $\nabla f:=(\partial_1f,\ldots,\partial_nf)$
and, for any $j\in\{1,\ldots,n\}$, $\partial_j f$
is defined by setting,
for any $\phi\in C_{\mathrm{c}}^\infty(\Omega)$
(the set of all the infinitely differentiable
functions on $\Omega$ with compact support
in $\Omega$),
$$
\int_{\Omega}f(x)\partial_j\phi(x)\,dx
=-\int_{\Omega}\partial_jf(x)\phi(x)\,dx.
$$
It is well known that the Gagliardo semi-norm
$\|f\|_{\dot{W}^{s,p}(\Omega)}$
can not recover
the homogeneous Sobolev semi-norm
$\|\,|\nabla f|\,\|_{L^p(\Omega)}$
if we replace $s$ directly by $1$ in \eqref{Gnorm}
(see also \cite{bbm2000,bbm2002}).
In 2001, Bourgain et al. \cite{bbm2001}
gave a suitable way to recover
$\|\cdot\|_{\dot{W}^{1,p}(\Omega)}$
via $\|\cdot\|_{\dot{W}^{s,p}(\Omega)}$
in the case that $\Omega$ is a bounded smooth domain,
which was further extended by Brezis \cite{b2002}
to $\mathbb{R}^n$. To be more precise, Brezis \cite{b2002}
proved that,
for any given $p\in[1,\infty)$ and for any
$f\in\dot{W}^{1,p}(\mathbb{R}^n)\cap L^p(\mathbb{R}^n)$,
\begin{align}\label{1055}
\lim_{s\to1^-}(1-s)
\|f\|_{\dot{W}^{s,p}(\mathbb{R}^n)}^p
=\frac{2\pi^\frac{n-1}{2}
\Gamma(\frac{p+1}{2})}{p\Gamma(\frac{p+n}{2})}
\|\,|\nabla f|\,\|_{L^p(\mathbb{R}^n)}^p,
\end{align}
where $\Gamma$ is the Gamma function.
Here and thereafter,
$s\to1^-$ means $s\in(0,1)$ and $s\to1$.
The identity \eqref{1055} is nowadays referred to
as the Bourgain--Brezis--Mironescu formula on $\mathbb{R}^n$.
Recently,
Brezis et al. \cite{bvy2021}
discovered a striking novel way to
recover the
homogeneous Sobolev semi-norm
$\|\cdot\|_{\dot{W}^{1,p}(\mathbb{R}^n)}$
from the Gagliardo semi-norm $\|\cdot\|_{\dot{W}^{s,p}(\Omega)}$
via replacing the product $L^p$ norm
$\|\cdot\|_{L^p(\mathbb{R}^n\times\mathbb{R}^n)}$
in \eqref{Gnorm} by the weak
product $L^p$ quasi-norm $\|\cdot\|_{
L^{p,\infty}(\mathbb{R}^n\times\mathbb{R}^n)}$,
which was further extended
by Brezis et al. \cite{bsvy.arxiv}
from any $f\in C_{\mathrm{c}}^\infty(\mathbb{R}^n)$
to any $f\in\dot{W}^{1,p}(\mathbb{R}^n)$
and also from $\gamma=n$ to any $\gamma\in\mathbb{R}\setminus\{0\}$.
Properly speaking,
let $p=1$ and $\gamma\in\mathbb{R}\setminus[-1,0]$ or
let $p\in(1,\infty)$ and
$\gamma\in\mathbb{R}\setminus\{0\}$.
Brezis et al. \cite{bsvy.arxiv} proved that,
for any $f\in\dot{W}^{1,p}(\mathbb{R}^n)$,
\begin{align}\label{2118}
&\left\|\,\left|\nabla f\right|\,\right\|_{L^p(\mathbb{R}^n)}\\
&\quad\ \sim\left\|\frac{f(x)-f(y)}{|x-y|^{1+\frac{\gamma}{p}}}
\right\|_{L^{p,\infty}(\mathbb{R}^n\times
\mathbb{R}^n,\nu_\gamma)}\nonumber\\
&\quad:=\sup_{\lambda\in(0,\infty)}\lambda\left[\nu_\gamma
\left(\left\{(x,y)\in\mathbb{R}^n\times\mathbb{R}^n:\
\frac{|f(x)-f(y)|}{|x-y|^{1+\frac{\gamma}{p}}}
>\lambda\right\}\right)\right]^\frac{1}{p},\nonumber
\end{align}
where the implicit positive equivalence constants
are independent of $f$ and,
for any measurable set
$E\subset\mathbb{R}^n\times\mathbb{R}^n$,
\begin{align*}
\nu_\gamma(E):=\iint_{\{(x,y)\in E:\ x\neq y\}}
\left|x-y\right|^{\gamma-n}\,dx\,dy.
\end{align*}
This formula \eqref{2118} is referred to
as the Brezis--Seeger--Van Schaftingen--Yung formula on $\mathbb{R}^n$.
Both \eqref{1055} and \eqref{2118} have found
some  interesting applications.
Indeed,
Brezis et al. \cite{bvy2021} used \eqref{2118} with $\gamma:=n$
to obtain several
surprising weak-type estimates
of fractional Sobolev and fractional Gagliardo--Nirenberg
inequalities in some critical cases
involving $\dot{W}^{1,1}(\mathbb{R}^n)$,
where the anticipated fractional Sobolev
and the anticipated fractional
Gagliardo--Nirenberg estimates may fail.
In addition, using both \eqref{1055} on smooth bounded
domains and
\eqref{2118}, Brezis et al. \cite{bsvy.arxiv}
further established a
characterization of $\dot{W}^{1,p}(\mathbb{R}^n)$.
We refer the reader
to \cite{bsy2023,bn2016,
dm.arXiv,gh2023,hp.arXiv,ls2011,l2014,m2005,n2006}
for more studies related
to the Bourgain--Brezis--Mironescu formula \eqref{1055},
to \cite{bsvy,dlyyz2022,dlyyz.arxiv,
dm.arXiv1,dssvy,dt.arXiv,frank,gy2021}
for more studies related
to the Brezis--Seeger--Van Schaftingen--Yung formula \eqref{2118},
and to \cite{bm2018,bm2019,bvyCVPDE}
for more studies on the
Gagliardo--Nirenberg inequality.

Recall that the concept of
ball quasi-Banach function spaces,
introduced by Sawano et al. \cite{shyy2017},
unifies several important function spaces
in both partial differential equations
and harmonic analysis,
which is less restrictive than the classical
concept of quasi-Banach function spaces
introduced by Bennett and Sharpley \cite[Chapter 1]{bs1988}.
Indeed, there exist many examples of ball quasi-Banach function spaces,
such as Morrey spaces, weighted (or mixed-norm or variable) Lebesgue spaces,
Bourgain--Morrey-type spaces,
local (or global) generalized Herz spaces,
Lorentz spaces, and Orlicz (or Orlicz-slice) spaces.
However, some of the aforementioned
function spaces might not be the classical
quasi-Banach function spaces;
see, for instance, \cite[Section~7]{shyy2017}.
Function spaces based on ball quasi-Banach
function spaces have attracted a lot of attention and
made considerable progress in recent years;
see, for instance, \cite{cwyz2020,is2017,s2018,shyy2017,yyy2020}.
Very recently, Dai et al. \cite{dlyyz.arxiv}
subtly used the Poincar\'e inequality,
the exquisite geometry of $\mathbb{R}^n$,
the method of extrapolation, and the exact operator
norm of the Hardy--Littlewood maximal operator
on the associate space of the convexification of
the ball Banach function space $X(\mathbb{R}^n)$
to successfully extend \eqref{2118} with $\gamma=n$
from $\dot{W}^{1,p}(\mathbb{R}^n)$ to
the homogeneous ball Banach Sobolev space
$\dot{W}^{1,X}(\mathbb{R}^n)$
[see Definition~\ref{2.7}(i) for the definition of
$\dot{W}^{1,X}(\mathbb{R}^n)$]
and then further established some new fractional Sobolev and
some new fractional Gagliardo--Nirenberg inequalities
in various specific function spaces.
Motivated by the work in \cite{bsvy.arxiv}
and borrowing some
ideas and techniques from \cite{dlyyz.arxiv},
the results in \cite{dlyyz.arxiv} were further
generalized by Zhu et al. \cite{zyy2023}
from $\gamma=n$ to any $\gamma\in\mathbb{R}\setminus\{0\}$.
Meanwhile, Dai et al. \cite{dgpyyz2022} established
an analogue of \eqref{1055} for a given ball Banach function space
satisfying some extra mild hypotheses.
Later, Zhu et al. \cite{zyy2023bbm} also extended \eqref{1055}
to the setting of ball Banach function spaces
on bounded $(\varepsilon,\infty)$-domains with $\varepsilon\in(0,1]$.
For more studies on ball quasi-Banach function spaces,
we refer the reader
to \cite{h2021,wyy2020,zwyy2021} for the
boundedness of operators on ball quasi-Banach function spaces,
to \cite{syy1,syy2,wyy.arxiv,
yhyy2022-1,yhyy2022-2,zhyy2022} for Hardy spaces
associated with ball quasi-Banach function spaces,
and to \cite{hcy2021,ins2019,tyyz2021,wyyz2021}
for further applications of ball quasi-Banach function spaces.

Motivated by the aforementioned works,
let $\gamma\in\mathbb{R}\setminus\{0\}$
and $X(\mathbb{R}^n)$ be a ball Banach function space
satisfying some extra mild assumptions.
Assume that $\Omega=\mathbb{R}^n$ or
$\Omega\subset\mathbb{R}^n$ is an $(\varepsilon,\infty)$-domain
for some $\varepsilon\in(0,1]$.
In this article, we prove that
a function $f$ belongs to the
homogeneous ball Banach Sobolev
space $\dot{W}^{1,X}(\Omega)$
if and only if $f\in L_{\mathrm{loc}}^1(\Omega)$ and
\begin{align}\label{1628}
\sup_{\lambda\in(0,\infty)}\lambda
\left\|\left[\int_{\Omega}
\mathbf{1}_{E_{\lambda,\frac{\gamma}{p}}[f]}(\cdot,y)
\left|\cdot-y\right|^{\gamma-n}\,dy
\right]^\frac{1}{p}\right\|_{X(\Omega)}<\infty,
\end{align}
where $p\in[1,\infty)$ is related to $X(\mathbb{R}^n)$
and
\begin{align}\label{Elambda}
E_{\lambda,\frac{\gamma}{p}}[f]
:=\left\{(x,y)\in\Omega\times\Omega:\
x\neq y,\ \frac{|f(x)-f(y)|}{
|x-y|^{1+\frac{\gamma}{p}}}>\lambda\right\}
\end{align}
for any $\lambda\in(0,\infty)$;
moreover, if this holds true for a function $f$,
then
\begin{align}\label{1531}
\sup_{\lambda\in(0,\infty)}\lambda
\left\|\left[\int_{\Omega}
\mathbf{1}_{E_{\lambda,\frac{\gamma}{p}}[f]}(\cdot,y)
\left|\cdot-y\right|^{\gamma-n}\,dy
\right]^\frac{1}{p}\right\|_{X(\Omega)}
\sim\left\|\,|\nabla f|\,\right\|_{X(\Omega)},
\end{align}
where the positive equivalence constants are independent of $f$
(see Theorems~\ref{1931} and~\ref{1039} below).
This result is of wide generality and can be applied
to various specific Sobolev-type function spaces,
including Morrey spaces, Bourgain--Morrey-type spaces,
weighted (or mixed-norm or variable) Lebesgue spaces,
local (or global) generalized Herz spaces,
Lorentz spaces, and Orlicz (or Orlicz-slice) Sobolev spaces,
which is new even in all these special cases; in particular,
it coincides with the well-known result
of Brezis et al. \cite{bsvy.arxiv}
when $X(\Omega):=L^q(\mathbb{R}^n)$
with $1<p=q<\infty$, while
it is still new even when
$X(\Omega):=L^q(\mathbb{R}^n)$ with $1\leq p<q<\infty$.
The novelty of this article exists in that,
to establish the characterization of $\dot{W}^{1,X}(\Omega)$,
we provide a machinery via using
the generalized Brezis--Seeger--Van Schaftingen--Yung
formula on $X(\mathbb{R}^n)$,
the extension theorem on $\dot{W}^{1,X}(\Omega)$,
the Bourgain--Brezis--Mironescu-type characterization of
the inhomogeneous ball Banach Sobolev
space $W^{1,X}(\Omega)$,
and the method of extrapolation
to overcome those difficulties caused by that $X(\mathbb{R}^n)$
might be neither the rotation invariance
nor the translation invariance and that
the norm of $X(\mathbb{R}^n)$ has no explicit expression.

As two main results of this article,
Theorems~\ref{1931} and~\ref{1039}
present a characterization of $\dot{W}^{1,X}(\Omega)$
in terms of \eqref{1628};
moreover, \eqref{1531} gives an equivalence
between the homogeneous Sobolev semi-norm and the functional
only involving the difference without any derivatives.
It is remarkable that such a characterization holds true
in the setting of ball Banach function spaces.
As was pointed out in \cite{dlyyz.arxiv} that finding an
appropriate way to characterize the smoothness of functions
via their finite differences is indeed notoriously
difficult in approximation theory,
even for some weighted Lebesgue spaces
in the 1-dimensional case;
see \cite{k2015,mt2001} and the references therein.
A main difficulty comes from that the
difference operator
$\Delta_\tau(f):=f(\cdot+\tau)-f(\cdot)$
for any $\tau\in\mathbb{R}^n\setminus\{\mathbf{0}\}$
is no longer bounded on general weighted Lebesgue spaces.
It turns out in \cite{dlyyz.arxiv} that the method of extrapolation
(see Lemma~\ref{4.6} below) plays a key role
in getting rid of the strong dependence on
the explicit expression of the norm of the ball Banach function space
under consideration,
which is a bridge connecting
the weighted Lebesgue space and
the ball Banach function space.
Using this extrapolation method
and the exact operator norm
of the Hardy--Littlewood
maximal operator
on the associate space
of the ball Banach function space, the estimate \eqref{1531} can be
deduced from a similar estimate to the one in weighted Lebesgue
spaces with Muckenhoupt $A_1(\mathbb{R}^n)$ weights.
On the other hand, we point out that the proofs
of Theorems~\ref{1931} and~\ref{1039} are far from trivial,
which particularly need to use two very recently developed new tools,
the generalized Brezis--Seeger--Van Schaftingen--Yung
formula on $X(\mathbb{R}^n)$
(see Lemma~\ref{1016} below)
and the Bourgain--Brezis--Mironescu-type characterization of
the inhomogeneous ball Banach Sobolev
space $W^{1,X}(\Omega)$ (see Lemma~\ref{2006} below).
Indeed, these two useful tools play an important role in the proof
of the main estimates of this article, which are used to
overcome the difficulties caused by the deficiency of
the rotation invariance and the translation invariance
of ball Banach function spaces.

The organization of the remainder of this article is as follows.

In Section~\ref{section2},
we first give some preliminaries on ball quasi-Banach function
spaces and then recall the concepts of both the
(in)homogeneous ball Banach Sobolev space
and the Muckenhoupt class.

Section~\ref{section5} is devoted to establishing
the Brezis--Seeger--Van Schaftingen--Yung-type
characterization of $\dot{W}^{1,X}(\Omega)$
in terms of \eqref{1628}.
In Subsection~\ref{sub3.1},
using the Brezis--Seeger--Van Schaftingen--Yung formula
on the ball Banach function space $X(\mathbb{R}^n)$
(see Lemma~\ref{1016} below),
the Bourgain--Brezis--Mironescu-type characterization
of $W^{1,X}(\Omega)$
(see Lemma~\ref{2006} below),
the method of extrapolation,
and the Alaoglu theorem, we obtain a characterization of
$\dot{W}^{1,X}(\mathbb{R}^n)$
under some extra mild assumptions on $X(\mathbb{R}^n)$
(see Theorem~\ref{1931} below).
In Subsection~\ref{sub3.2},
we establish a characterization of $\dot{W}^{1,X}(\Omega)$
(see Theorem~\ref{1039} below)
via using the extension theorem on $\dot{W}^{1,X}(\Omega)$
and the aforementioned tools used in the proof
of Theorem~\ref{1931},
where $\Omega\subset\mathbb{R}^n$
is an $(\varepsilon,\infty)$-domain
for some $\varepsilon\in(0,1]$.

In Section~\ref{S5},
we apply Theorems~\ref{1931} and~\ref{1039}
to various specific ball Banach function spaces, such as
weighted Lebesgue spaces
and Morrey spaces (see Subsection~\ref{5.4} below),
Besov--Bourgain--Morrey spaces
(see Subsection~\ref{BBMspace} below),
local generalized Herz spaces
and global generalized Herz spaces
(see Subsection~\ref{Herz} below),
mixed-norm Lebesgue spaces (see Subsection~\ref{5.2} below),
variable Lebesgue spaces (see Subsection~\ref{5.3} below),
Lorentz spaces (see Subsection~\ref{5.7} below),
Orlicz spaces (see Subsection~\ref{5.5} below),
and Orlicz-slice spaces (see Subsection~\ref{5.6} below).
All of these results are new.
Obviously, more applications of the results of this article
(for instance, to some newfound function spaces)
are predictable
due to their flexibility and their generality.

At the end of this section, we make some conventions on notation.
We always let $\mathbb{N}:=\{1,2,\ldots\}$
and $\mathbb{S}^{n-1}$
be the unit sphere of $\mathbb{R}^n$.
We denote by $C$ a \emph{positive constant} which is independent
of the main parameters involved, but may vary from line to line.
We use $C_{(\alpha,\dots)}$ to denote a positive constant depending
on the indicated parameters $\alpha,\, \dots$.
The symbol $f\lesssim g$ means $f\le Cg$
and, if $f\lesssim g\lesssim f$, we then write $f\sim g$.
If $f\le Cg$ and $g=h$ or $g\le h$,
we then write $f\lesssim g=h$ or $f\lesssim g\le h$.
For any index $q\in[1,\infty]$,
its \emph{conjugate index} is denoted by $q'$,
that is, $\frac{1}{q}+\frac{1}{q'}=1$.
For any measurable
set $E\subset\mathbb{R}^n$,
we denote by ${\mathbf{1}}_E$ its
\emph{characteristic function},
by $E^\complement$ the set $\mathbb{R}^n\setminus E$,
and by $|E|$ its $n$-dimensional Lebesgue measure.
Moreover, we denote by $\sigma_{n-1}$
the $(n-1)$-dimensional Lebesgue measure of the unit sphere
$\mathbb{S}^{n-1}$ of $\mathbb{R}^n$.
For any $x\in\mathbb{R}^n$ and $r\in(0,\infty)$,
let $B(x,r):=\{y\in\mathbb{R}^n:\ |x-y|<r\}$.
Usually, for any $r\in(0,\infty)$,
we let $B_r:=B({\bf 0},r)$,
where $\mathbf{0}$ denotes the \emph{origin} of $\mathbb{R}^n$.
Let $\Omega\subset\mathbb{R}^n$ be an open set.
For any $p\in(0,\infty]$,
the \emph{Lebesgue space}
$L^p(\Omega)$
is defined to be the set of
all the measurable functions $f$
on $\Omega$ such that
\begin{align*}
\|f\|_{L^p(\Omega)}:=
\left[\int_{\Omega}|f(x)|^p\,dx\right]^\frac{1}{p}<\infty.
\end{align*}
For any $p\in(0,\infty)$,
the set of all the locally $p$-integrable
functions on $\Omega$,
$L_{\mathrm{loc}}^p(\Omega)$,
is defined to be the set of all the
measurable functions $f$
on $\Omega$ such that,
for any open set $V\subset\Omega$
satisfying that $V$ is compactly contained in $\Omega$,
$f\in L^p(V)$.
For any $f\in L^1_{\mathrm{loc}}(\mathbb{R}^n)$,
its \emph{Hardy--Littlewood maximal function} $\mathcal{M}(f)$
is defined by setting, for any $x\in\mathbb{R}^n$,
\begin{align*}
\mathcal{M}(f)(x):=\sup_{B\ni x}
\frac{1}{|B|}\int_B\left|f(y)\right|\,dy,
\end{align*}
where the supremum is taken over all the
balls $B\subset\mathbb{R}^n$ containing $x$.
Finally, when we prove a theorem (or the like),
in its proof we always use the same symbols as in the
statement itself of that theorem (or the like).

\section{Ball Banach Function (Sobolev) Spaces}
\label{section2}

In this section, we recall the concepts of
the ball quasi-Banach function space
as well as its convexification and its associate space,
the (in)homogeneous ball Banach Sobolev space,
and the Muckenhoupt class $A_p(\mathbb{R}^n)$.
First, we give some preliminaries on ball quasi-Banach function
spaces introduced in \cite[Definition~2.2]{shyy2017}.
Throughout this article,
the \emph{symbol} $\mathscr{M}(\mathbb{R}^n)$ denotes
the set of all measurable functions
on $\mathbb{R}^n$.

\begin{definition}\label{1659}
A quasi-Banach space $X(\mathbb{R}^n)
\subset\mathscr{M}(\mathbb{R}^n)$,
equipped with a quasi-norm $\|\cdot\|_{X(\mathbb{R}^n)}$
which makes sense for all functions
in $\mathscr{M}(\mathbb{R}^n)$,
is called a \emph{ball quasi-Banach function space} if
it has the following properties:
\begin{enumerate}
\item[\textup{(i)}]
for any $f\in\mathscr{M}(\mathbb{R}^n)$,
if $\|f\|_{X(\mathbb{R}^n)}=0$, then $f=0$ almost everywhere
in $\mathbb{R}^n$;
\item[\textup{(ii)}]
if $f,g\in\mathscr{M}(\mathbb{R}^n)$
with $|g|\leq|f|$ almost everywhere
in $\mathbb{R}^n$,
then $\|g\|_{X(\mathbb{R}^n)}
\leq\|f\|_{X(\mathbb{R}^n)}$;
\item[\textup{(iii)}]
if a sequence $\{f_m\}_{m\in\mathbb{N}}
\subset\mathscr{M}(\mathbb{R}^n)$
and $f\in\mathscr{M}(\mathbb{R}^n)$ satisfy
that $0\leq f_m\uparrow f$ almost everywhere in $\mathbb{R}^n$
as $m\to\infty$, then $\|f_m\|_{X(\mathbb{R}^n)}
\uparrow\|f\|_{X(\mathbb{R}^n)}$ as $m\to\infty$;
\item[\textup{(iv)}]
for any ball $B\subset\mathbb{R}^n$,
$\mathbf{1}_B\in X(\mathbb{R}^n)$.
\end{enumerate}
Moreover, a ball quasi-Banach function
space $X(\mathbb{R}^n)$ is called a
\emph{ball Banach function space} if
\begin{enumerate}
\item[\textup{(v)}]
for any $f,g\in X(\mathbb{R}^n)$,
$$
\|f+g\|_{X(\mathbb{R}^n)}\leq
\|f\|_{X(\mathbb{R}^n)}+\|g\|_{X(\mathbb{R}^n)};
$$
\item[\textup{(vi)}]
for any ball $B\subset\mathbb{R}^n$,
there exists a positive constant $C_{(B)}$
such that, for any $f\in X(\mathbb{R}^n)$,
$$
\int_B\left|f(x)\right|\,dx\leq
C_{(B)}\|f\|_{X(\mathbb{R}^n)}.
$$
\end{enumerate}
\end{definition}

\begin{remark}\label{1052}
\begin{enumerate}
\item[\textup{(i)}]
Let $X(\mathbb{R}^n)$ be a ball quasi-Banach function space.
By \cite[Remark~2.5(i)]{yhyy2022-1},
we find that, for any $f\in\mathscr{M}(\mathbb{R}^n)$,
$\|f\|_{X(\mathbb{R}^n)}=0$ if and only if $f=0$ almost everywhere
in $\mathbb{R}^n$.
\item[\textup{(ii)}]
As was mentioned in \cite[Remark~2.5(ii)]{yhyy2022-1},
we obtain an equivalent formulation
of Definition~\ref{1659} via replacing any
ball $B$ therein by any bounded measurable set $E$.
\item[\textup{(iii)}]
In Definition~\ref{1659}, if we replace any ball $B$ by any
measurable set $E$ with $|E|<\infty$,
then we obtain the definition of
(quasi-)Banach function spaces, which was originally introduced
by Bennett and Sharpley in
\cite[Chapter 1,
Definitions~1.1 and~1.3]{bs1988}.
Thus, a (quasi-)Banach function space is always a ball
(quasi-)Banach function space, but the converse is
not necessary to be true.
\item[\textup{(iv)}]
By \cite[Proposition~1.2.36]{lyh2320},
we find that both (ii) and (iii) of
Definition~\ref{1659} imply that any ball quasi-Banach function
space is complete.
\item[\rm(v)]
Let $X(\mathbb{R}^n)$ be a ball quasi-Banach function space.
Then, from both (ii) and (iv) of Definition~\ref{1659},
we infer that $C_{\mathrm{c}}^\infty
(\mathbb{R}^n)\subset X(\mathbb{R}^n)$.
\end{enumerate}
\end{remark}

Now, we recall the
concept of the $p$-convexification of
quasi-normed vector spaces;
see, for instance, \cite[Definition~2.6]{shyy2017}.

\begin{definition}\label{tuhua}
Let $p\in(0,\infty)$.
The \emph{$p$-convexification} $X^p(\mathbb{R}^n)$ of
the quasi-normed vector space
$X(\mathbb{R}^n)$ is defined by setting
$X^p(\mathbb{R}^n):=\{f\in\mathscr{M}(\mathbb{R}^n):\
|f|^p\in X(\mathbb{R}^n)\}$
equipped with the quasi-norm
$\|f\|_{X^p(\mathbb{R}^n)}:=\|\,|f|^p\|_{X(\mathbb{R}^n)}^\frac{1}{p}$
for any $f\in X^p(\mathbb{R}^n)$.
\end{definition}

The following concept
can be found in \cite[Section~2]{lz2023};
see \cite{lz20231,z67} for more details.

\begin{definition}\label{016}
A vector space $X(\mathbb{R}^n)\subset\mathscr{M}(\mathbb{R}^n)$
equipped with a quasi-norm $\|\cdot\|_{X(\mathbb{R}^n)}$
is called a \emph{quasi-Banach function space} over $\mathbb{R}^n$
if it has the following properties:
\begin{enumerate}
\item[\rm(i)]
for any $f\in X(\mathbb{R}^n)$,
if $\|f\|_{X(\mathbb{R}^n)}=0$, then $f=0$ almost everywhere
in $\mathbb{R}^n$;
\item[\rm(ii)]
if $f\in X(\mathbb{R}^n)$ and $g\in\mathscr{M}(\mathbb{R}^n)$
with $|g|\leq|f|$ almost everywhere in $\mathbb{R}^n$,
then $g\in X(\mathbb{R}^n)$
and $\|g\|_{X(\mathbb{R}^n)}\leq\|f\|_{X(\mathbb{R}^n)}$;
\item[\rm(iii)]
if a sequence $\{f_m\}_{m\in\mathbb{N}}
\subset X(\mathbb{R}^n)$
and $f\in\mathscr{M}(\mathbb{R}^n)$ satisfy
that $0\leq f_m\uparrow f$ almost everywhere in $\mathbb{R}^n$
as $m\to\infty$
and $\sup_{m\in\mathbb{N}}\|f_m\|_{X(\mathbb{R}^n)}<\infty$,
then $f\in X(\mathbb{R}^n)$ and $\|f\|_{X(\mathbb{R}^n)}
=\sup_{m\in\mathbb{N}}\|f_m\|_{X(\mathbb{R}^n)}$;
\item[\rm(iv)]
for any measurable set $E\subset\mathbb{R}^n$
of positive measure, there exists a measurable set $F\subset E$
of positive measure satisfying that $\mathbf{1}_F\in X(\mathbb{R}^n)$.
\end{enumerate}
\end{definition}

Definition \ref{016}(iv) is called the \emph{saturation property}.
The following proposition shows that,
if the quasi-normed vector space $X(\mathbb{R}^n)$ satisfies that
the Hardy--Littlewood maximal operator is weakly bounded on
its convexification,
then Definition~\ref{016} coincides with Definition~\ref{1659};
this proposition can be deduced from \cite[Proposition~4.22]{narxiv} and
we omit the details.

\begin{proposition}\label{839}
Let $X(\mathbb{R}^n)\subset\mathscr{M}(\mathbb{R}^n)$
be a vector space equipped with a quasi-norm $\|\cdot\|_{X(\mathbb{R}^n)}$.
Assume that there exists $r\in(0,\infty)$ such that
the Hardy--Littlewood maximal operator $\mathcal{M}$
is weakly bounded on $X^r(\mathbb{R}^n)$, that is,
there exists a positive constant $C$ such that,
for any $f\in X^r(\mathbb{R}^n)$,
$$
\sup_{\lambda\in(0,\infty)}\lambda
\left\|\mathbf{1}_{\{x\in\mathbb{R}^n:\
\mathcal{M}(f)(x)>\lambda\}}\right\|_{X^r(\mathbb{R}^n)}
\leq C\|f\|_{X^r(\mathbb{R}^n)}.
$$
Then $X(\mathbb{R}^n)$ is a quasi-Banach function space
in the sense of Definition~\ref{016} if and only if
$X(\mathbb{R}^n)$ is a ball quasi-Banach function space
in the sense of Definition~\ref{1659}.
\end{proposition}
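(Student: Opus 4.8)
The plan is to show that the only axiom distinguishing Definitions~\ref{1659} and~\ref{016} is the replacement of the saturation property (iv) of Definition~\ref{016} by the stronger requirement that $\mathbf{1}_B\in X(\mathbb{R}^n)$ for every ball $B$ (property (iv) of Definition~\ref{1659}), since properties (i), (ii), and (iii) of the two definitions are essentially the same (the monotone convergence axiom (iii) of Definition~\ref{1659} is exactly the conjunction of Definition~\ref{016}(ii) with Definition~\ref{016}(iii), once one unpacks them). Hence everything reduces to the following equivalence under the standing hypothesis that $\mathcal M$ is weakly bounded on $X^r(\mathbb{R}^n)$ for some $r\in(0,\infty)$: the saturation property holds if and only if $\mathbf{1}_B\in X(\mathbb{R}^n)$ for all balls $B$. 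The nontrivial direction is that saturation, together with the weak boundedness of $\mathcal M$ on $X^r(\mathbb{R}^n)$, forces $\mathbf{1}_B\in X(\mathbb{R}^n)$ for every ball $B$; the converse is immediate, since if $\mathbf{1}_B\in X(\mathbb{R}^n)$ for every ball $B$ then, given any measurable $E$ with $|E|>0$, choosing a ball $B$ with $|E\cap B|>0$ and setting $F:=E\cap B$ gives $\mathbf{1}_F\le\mathbf{1}_B$, so $\mathbf{1}_F\in X(\mathbb{R}^n)$ by Definition~\ref{016}(ii).

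For the main direction, first I would use the saturation property to produce, for a fixed ball $B$, a measurable set $F\subset B$ of positive measure with $\mathbf{1}_F\in X(\mathbb{R}^n)$, hence $\mathbf{1}_F\in X^r(\mathbb{R}^n)$ (with $\|\mathbf{1}_F\|_{X^r}=\|\mathbf{1}_F\|_X^{1/r}$). The key geometric observation is that the Hardy--Littlewood maximal function of $\mathbf{1}_F$ is bounded below on a dilate of $B$: since $F\subset B$ and $|F|>0$, for every $x$ in, say, $2B$ the ball $2B$ contains both $x$ and $F$, so $\mathcal M(\mathbf{1}_F)(x)\ge |F|/|2B|=:c_0>0$. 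Therefore $\mathbf{1}_{2B}\le\mathbf{1}_{\{\mathcal M(\mathbf{1}_F)>c_0/2\}}$ pointwise. Applying the weak boundedness of $\mathcal M$ on $X^r(\mathbb{R}^n)$ with $f:=\mathbf{1}_F$ and $\lambda:=c_0/2$, together with the lattice property (ii) of $X^r(\mathbb{R}^n)$, yields
$$
\|\mathbf{1}_{2B}\|_{X^r(\mathbb{R}^n)}
\le\left\|\mathbf{1}_{\{\mathcal M(\mathbf{1}_F)>c_0/2\}}\right\|_{X^r(\mathbb{R}^n)}
\le\frac{2C}{c_0}\|\mathbf{1}_F\|_{X^r(\mathbb{R}^n)}<\infty,
$$
so $\mathbf{1}_{2B}\in X^r(\mathbb{R}^n)$, i.e.\ $\mathbf{1}_{2B}\in X(\mathbb{R}^n)$; a fortiori $\mathbf{1}_B\in X(\mathbb{R}^n)$ by the lattice property of $X(\mathbb{R}^n)$. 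Since $B$ was arbitrary, property (iv) of Definition~\ref{1659} follows, and all the remaining axioms match up, so the two notions coincide.

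The step I expect to require the most care is verifying that the weak-boundedness hypothesis is genuinely available at the stage where it is needed, and that $\|\cdot\|_{X^r(\mathbb{R}^n)}$ inherits the lattice property and the Fatou-type monotone convergence property from $\|\cdot\|_{X(\mathbb{R}^n)}$ so that the displayed chain of inequalities is legitimate — these are routine but must be stated, and indeed the cited source \cite[Proposition~4.22]{narxiv} handles exactly this bookkeeping, which is why the details can be omitted. One should also note that the monotone convergence axiom in the two definitions is phrased slightly differently: Definition~\ref{1659}(iii) asserts $\|f_m\|_X\uparrow\|f\|_X$ whenever $0\le f_m\uparrow f$ with $f_m\in\mathscr M(\mathbb{R}^n)$, whereas Definition~\ref{016}(iii) only draws a conclusion under the extra assumption $\sup_m\|f_m\|_X<\infty$; but these are equivalent once one knows that $\|\cdot\|_X$ is defined on all of $\mathscr M(\mathbb{R}^n)$ and is monotone, since in the case $\sup_m\|f_m\|_X=\infty$ the lattice property forces $\|f\|_X=\infty$ as well. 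With these identifications in place, the proposition is exactly the combination of the two observations above, and we omit the routine details.
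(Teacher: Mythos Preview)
Your proposal is correct and matches the paper's approach: the paper simply cites \cite[Proposition~4.22]{narxiv} and omits the details, and in Remark~\ref{920}(i) makes explicit exactly the reduction you carry out --- namely, apply Nieraeth's result to $X^r(\mathbb{R}^n)$ to get $\mathbf{1}_B\in X^r(\mathbb{R}^n)$, which is equivalent to $\mathbf{1}_B\in X(\mathbb{R}^n)$. Your argument (saturation gives $F\subset B$ with $\mathbf{1}_F\in X$, then $\mathcal{M}(\mathbf{1}_F)\gtrsim 1$ on $2B$, then weak boundedness yields $\mathbf{1}_{2B}\in X^r$) is precisely the content of that cited proposition, so you have supplied the details the paper chose to omit.
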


\begin{remark}\label{920}
\begin{enumerate}
\item[\rm(i)] Let $X(\mathbb{R}^n)$ be a quasi-Banach function space
in the sense of Definition~\ref{016}. Nieraeth \cite[Proposition~4.22]{narxiv}
showed that, if the Hardy--Littlewood maximal operator $\mathcal{M}$
is weakly bounded on $X(\mathbb{R}^n)$,
then $\mathbf{1}_B\in X(\mathbb{R}^n)$ for any ball $B\subset\mathbb{R}^n$.
Obviously, this assumption is slightly stronger than the corresponding assumption
in Proposition~\ref{839}. Indeed, the corresponding assumption
in Proposition~\ref{839} and \cite[Proposition~4.22]{narxiv} imply that,
for any ball $B\subset\mathbb{R}^n$,
$\mathbf{1}_B\in X^r(\mathbb{R}^n)$
which is equivalent to $\mathbf{1}_B\in X(\mathbb{R}^n)$.
\item[\rm(ii)]
By Proposition~\ref{839} and \cite[Remark~2.6]{narxiv},
we conclude that any ball (quasi-)Banach function space
is always a (quasi-)Banach function space in the sense
of Definition~\ref{016};
on the other hand, any (quasi-)Banach function space
satisfying the assumption of Proposition~\ref{839}
is also a ball (quasi-)Banach function space.
A similar fact has already been observed in the remarks after
\cite[Theorem 6.2]{lz2023}.
\item[\rm(iii)]
From (i) of this remark, we further infer that,
under the extra assumption that the Hardy--Littlewood
maximal operator is weakly bounded on some convexification of
$X(\mathbb{R}^n)$, working with ball (quasi-)Banach function
spaces in the sense of Definition~\ref{1659}
or working with (quasi-)Banach function spaces in
the sense of Definition~\ref{016} would yield
exactly the same results.
\end{enumerate}
\end{remark}

In the remainder of this section,
let $\Omega$ be an open subset of $\mathbb{R}^n$.
The following concept of the restrictive
space of a ball quasi-Banach function space
can be found in \cite[Definition~2.6]{zyy2023bbm}.
Let $\mathscr{M}(\Omega)$ denote the set of
all measurable functions on $\Omega$.
For any $g\in\mathscr{M}(\mathbb{R}^n)$,
denote by $g|_\Omega$ the \emph{restriction} of $g$ on $\Omega$.

\begin{definition}\label{1635}
Let $X(\mathbb{R}^n)$ be a ball quasi-Banach function space.
The \emph{restrictive space} $X(\Omega)$ of
$X(\mathbb{R}^n)$ on $\Omega$ is defined by setting
\begin{align*}
X(\Omega):=\left\{f\in\mathscr{M}(\Omega):\
f=g|_\Omega\text{ for some }g\in X(\mathbb{R}^n)\right\};
\end{align*}
moreover, for any $f\in X(\Omega)$, let
$\|f\|_{X(\Omega)}:=\inf\{\|g\|_{X(\mathbb{R}^n)}:\
f=g|_\Omega,\,g\in X(\mathbb{R}^n)\}$.
\end{definition}

\begin{remark}\label{norm}
Let $X(\mathbb{R}^n)$ be a ball quasi-Banach function space
and $X(\Omega)$ its restrictive space.
\begin{enumerate}
\item[\rm(i)]
By \cite[Proposition~2.7]{zyy2023bbm},
we find that, for any $f\in X(\Omega)$,
$\|f\|_{X(\Omega)}=
\|\widetilde{f}\|_{X(\mathbb{R}^n)}$,
where
\begin{align}\label{1448}
\widetilde{f}(x):=
\begin{cases}
f(x)&\textup{ if }x\in\Omega\\
0&\textup{ if }x\in\Omega^\complement.
\end{cases}
\end{align}
\item[\rm(ii)]
From \cite[Proposition~2.8]{zyy2023bbm},
we deduce that $X(\Omega)$ satisfies all
the conditions in Definition~\ref{1659}
with $\mathbb{R}^n$ replaced by $\Omega$.
\item[\rm(iii)]
By both (ii) and (iv) of
\cite[Proposition~2.8]{zyy2023bbm},
we find that $C_{\mathrm{c}}^\infty(\Omega)
\subset X(\Omega)$.
\end{enumerate}
\end{remark}

We recall the concept of a ball quasi-Banach function space
with absolutely continuous quasi-norm, which
can be found
in \cite[Chapter 1, Definition~3.1]{bs1988};
see also \cite[Definition~3.2]{wyy2020}.

\begin{definition}\label{2029}
Let $X(\mathbb{R}^n)$ be a ball quasi-Banach function space
and $X(\Omega)$ its restrictive space.
Then $X(\Omega)$ is said
to have an \emph{absolutely continuous quasi-norm on $\Omega$}
if, for any $f\in X(\Omega)$ and any sequence $\{E_j\}_{j\in\mathbb{N}}$
of measurable subsets of $\Omega$
satisfying that $\mathbf{1}_{E_j}\to0$
almost everywhere in $\Omega$ as $j\to\infty$, one has
$\|f\mathbf{1}_{E_j}\|_{X(\Omega)}\to0$ as $j\to\infty$.
\end{definition}

Next, we recall the
concept of the associate space of a ball
Banach function space, which can be found in
\cite[Definition~2.10]{zyy2023bbm};
see \cite[p.\,9]{shyy2017} and
\cite[Chapter 1, Section 2]{bs1988} for more details.

\begin{definition}\label{associte}
Let $X(\mathbb{R}^n)$ be a ball Banach function space
and $X(\Omega)$ its restrictive space.
The \emph{associate space}
(also called the \emph{K\"othe dual}) $X'(\Omega)$
of $X(\Omega)$
is defined by setting
\begin{align*}
X'(\Omega):=\left\{f\in\mathscr{M}(\Omega):\
\|f\|_{X'(\Omega)}:=\sup_{\{g\in X(\Omega):\
\|g\|_{X(\Omega)}=1\}}
\left\|fg\right\|_{L^1(\Omega)}<\infty\right\},
\end{align*}
where $\|\cdot\|_{X'(\Omega)}$ is called the
\emph{associate norm} of $\|\cdot\|_{X(\Omega)}$.
\end{definition}

\begin{remark}\label{dual}
Let $X(\mathbb{R}^n)$ be a ball Banach function space
and $X(\Omega)$ its restrictive space.
\begin{enumerate}
\item[(i)]
From \cite[Proposition~2.3]{shyy2017},
we infer that $X'(\mathbb{R}^n)$ is also a ball Banach function space.
\item[(ii)]
By \cite[Lemma~2.6]{zwyy2021},
we find that $X(\Omega)$ coincides with its second
associate space $X''(\Omega)$.
\item[(iii)]
If $X(\Omega)$ has an absolutely continuous norm on $\Omega$,
then, from an argument similar to that
used in the proof of \cite[Chapter 1, Corollary~4.3]{bs1988},
we deduce that $X'(\Omega)$ coincides with $X^*(\Omega)$.
Here and thereafter,
$X^*(\Omega)$ denotes the \emph{dual space} of $X(\Omega)$.
\item[(iv)]
The associate space of $X(\Omega)$ in Definition~\ref{associte}
coincides with the set
of restrictions of $X'(\mathbb{R}^n)$ functions on $\Omega$
with the same norms; see \cite[Proposition 2.12]{zyy2023bbm}.
\end{enumerate}
\end{remark}

Now, we recall the concept of ball Banach Sobolev spaces;
see, for instance, \cite[Definition~2.4]{dlyyz.arxiv},
\cite[Definition~2.6]{dgpyyz2022},
and \cite[Definition~2.14]{zyy2023bbm}.

\begin{definition}\label{2.7}
Let $X(\mathbb{R}^n)$ be a ball Banach function space
and $X(\Omega)$ its restrictive space.
\begin{enumerate}
\item[\rm(i)]
The \emph{homogeneous ball Banach
Sobolev space} $\dot{W}^{1,X}(\Omega)$
is defined to be the set of all the
$f\in L_{\mathrm{loc}}^1(\Omega)$
such that $|\nabla f|\in X(\Omega)$ equipped
with the semi-norm
$$
\|f\|_{\dot{W}^{1,X}(\Omega)}:=
\left\|\,|\nabla f|\,\right\|_{X(\Omega)}.
$$
\item[\rm(ii)]
The \emph{inhomogeneous ball Banach
Sobolev space} $W^{1,X}(\Omega)$
is defined to be the set of all the
$f\in X(\Omega)$ such that $|\nabla f|\in X(\Omega)$ equipped
with the norm
$$
\|f\|_{W^{1,X}(\Omega)}:=\|f\|_{X(\Omega)}+
\left\|\,|\nabla f|\,\right\|_{X(\Omega)}.
$$
\end{enumerate}
\end{definition}

Next, we recall the concept of the
Muckenhoupt $A_p(\mathbb{R}^n)$ class
(see, for instance, \cite[Definitions~7.1.1 and~7.1.3]{g2014}).

\begin{definition}\label{1557}
An \emph{$A_p(\mathbb{R}^n)$-weight} $\omega$, with $p\in[1,\infty)$,
is a nonnegative locally integrable function
on $\mathbb{R}^n$ satisfying that,
when $p=1$,
\begin{align}\label{A1}
[\omega]_{A_1(\mathbb{R}^n)}:=\sup_{Q\subset\mathbb{R}^n}
\frac{\|\omega^{-1}\|_{L^\infty(Q)}}{|Q|}\int_Q\omega(x)\,dx<\infty
\end{align}
and, when $p\in(1,\infty)$,
\begin{align}\label{Ap}
[\omega]_{A_p(\mathbb{R}^n)}:=\sup_{Q\subset\mathbb{R}^n}
\frac{1}{|Q|}\int_Q\omega(x)\,dx
\left\{\frac{1}{|Q|}\int_Q
[\omega(x)]^{1-p'}\,dx\right\}^{p-1}<\infty,
\end{align}
where $p'$ denotes the conjugate index of $p$ and
the suprema in both \eqref{A1} and \eqref{Ap}
are taken over all cubes $Q\subset\mathbb{R}^n$.
Moreover, let
$A_\infty(\mathbb{R}^n):=\bigcup_{p\in[1,\infty)}A_p(\mathbb{R}^n).$
\end{definition}

Recall that the \emph{weighted Lebesgue space}
$L^r_\omega(\Omega)$,
with both $r\in(0,\infty)$
and $\omega\in A_\infty(\mathbb{R}^n)$,
is defined to be the set of all the
$f\in\mathscr{M}(\Omega)$ such that
\begin{align}\label{2040}
\|f\|_{L^r_\omega(\Omega)}:=
\left[\int_{\Omega}\left|f(x)\right|^r
\omega(x)\,dx\right]^{\frac{1}{r}}<\infty
\end{align}
with the usual modification made when $r=\infty$.

The following
basic properties and conclusions of
Muckenhoupt $A_p(\mathbb{R}^n)$-weights
can be found in
\cite[Proposition~7.1.5 and Theorem~7.1.9]{g2014}
and \cite[Theorem~2.7.4]{dhhr2011}.

\begin{lemma}\label{ApProperty}
Let $p\in[1,\infty)$ and $\omega\in A_p(\mathbb{R}^n)$.
Then the following statements hold true:
\begin{enumerate}
\item[\textup{(i)}]
$\omega\in A_q(\mathbb{R}^n)$ for any $q\in[p,\infty)$;
moreover, $[\omega]_{A_q(\mathbb{R}^n)}
\leq[\omega]_{A_p(\mathbb{R}^n)}$;
\item[\textup{(ii)}]
if $p\in(1,\infty)$,
then $\omega^{1-p'}\in A_{p'}(\mathbb{R}^n)$ and
$[L_\omega^p(\mathbb{R}^n)]'=
L_{\omega^{1-p'}}^{p'}(\mathbb{R}^n)$;
\item[\textup{(iii)}]
if $p\in(1,\infty)$, then the Hardy--Littlewood
maximal operator $\mathcal{M}$ is bounded
on $L^p_\omega(\mathbb{R}^n)$;
moreover, there exists a constant $C\in(0,\infty)$,
independent of $\omega$, such that
$$
\left\|\mathcal{M}\right\|_{L^p_\omega(\mathbb{R}^n)
\to L^p_\omega(\mathbb{R}^n)}
\leq C[\omega]_{A_p(\mathbb{R}^n)}^{p'-1},
$$
where $\|\mathcal{M}\|_{L^p_\omega
(\mathbb{R}^n)\to L^p_\omega(\mathbb{R}^n)}$
denotes the operator norm of
$\mathcal{M}$ on $L^p_\omega(\mathbb{R}^n)$.
\end{enumerate}
\end{lemma}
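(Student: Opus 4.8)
\medskip
\noindent\emph{Proof proposal.}\quad Because Lemma~\ref{ApProperty} only collects classical facts about Muckenhoupt weights, the plan is to reduce each of its three parts to an elementary inequality, or else to quote the cited sources. For~(i), the case $q=p$ is trivial, so I would take $q\in(p,\infty)$, note that $1<q'<p'$, and set $\theta:=(q'-1)/(p'-1)\in(0,1]$. When $p\in(1,\infty)$, for each cube $Q\subset\mathbb{R}^n$ I would write $\omega^{1-q'}=(\omega^{1-p'})^{\theta}$ and apply Jensen's inequality for the concave power $t\mapsto t^{\theta}$ to get $\frac{1}{|Q|}\int_Q\omega^{1-q'}\le(\frac{1}{|Q|}\int_Q\omega^{1-p'})^{\theta}$; raising to the power $q-1$ and using the identities $(q-1)(q'-1)=1=(p-1)(p'-1)$, which force $(q-1)\theta=p-1$, then multiplying by $\frac{1}{|Q|}\int_Q\omega$ and taking the supremum over all cubes $Q$, should give $[\omega]_{A_q(\mathbb{R}^n)}\le[\omega]_{A_p(\mathbb{R}^n)}$. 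For $p=1$, I would instead combine \eqref{A1} in the form $\frac{1}{|Q|}\int_Q\omega\le[\omega]_{A_1(\mathbb{R}^n)}\|\omega^{-1}\|_{L^\infty(Q)}^{-1}$ with $\frac{1}{|Q|}\int_Q\omega^{1-q'}=\frac{1}{|Q|}\int_Q(\omega^{-1})^{q'-1}\le\|\omega^{-1}\|_{L^\infty(Q)}^{q'-1}$ and again invoke $(q-1)(q'-1)=1$.

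For~(ii), I would put $\sigma:=\omega^{1-p'}$ and use the symmetry $(p')'=p$ together with $(p-1)(p'-1)=1$; these give $\sigma^{1-(p')'}=\sigma^{1-p}=\omega^{(1-p')(1-p)}=\omega$, so inserting this into \eqref{Ap} with $p$ replaced by $p'$ shows, cube by cube, that the $A_{p'}$-quantity of $\sigma$ on $Q$ is exactly the $(p'-1)$-th power of the $A_p$-quantity of $\omega$ on $Q$; taking suprema then yields $[\sigma]_{A_{p'}(\mathbb{R}^n)}=[\omega]_{A_p(\mathbb{R}^n)}^{p'-1}<\infty$, in particular $\sigma\in A_{p'}(\mathbb{R}^n)$. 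For the associate-space identity, I would observe from \eqref{2040} that $f\mapsto f\omega^{1/p}$ is an isometric isomorphism of $L^p_\omega(\mathbb{R}^n)$ onto $L^p(\mathbb{R}^n)$, recall that $[L^p(\mathbb{R}^n)]'=L^{p'}(\mathbb{R}^n)$ with equal norms, and transport the weight via $\int_{\mathbb{R}^n}|\omega^{-1/p}g|^{p'}=\int_{\mathbb{R}^n}|g|^{p'}\omega^{-p'/p}=\int_{\mathbb{R}^n}|g|^{p'}\omega^{1-p'}$ (using $p'/p=p'-1$), which identifies $[L^p_\omega(\mathbb{R}^n)]'$ isometrically with $L^{p'}_{\omega^{1-p'}}(\mathbb{R}^n)$.

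Part~(iii) is the substantive one, and I expect the quantitative bookkeeping of the constant to be the main obstacle. It is Buckley's sharp weighted bound, and I would follow the now-standard route. First, using finitely many shifted dyadic lattices $\{\mathscr{D}_t\}_t$, reduce to the dyadic maximal operators $\mathcal{M}_{\mathscr{D}_t}$ via the pointwise bound $\mathcal{M}(f)\lesssim\sum_t\mathcal{M}_{\mathscr{D}_t}(f)$, and by monotone convergence restrict to nonnegative $f$ that is bounded with compact support. Next, with $\sigma:=\omega^{1-p'}$ (which lies in $A_{p'}(\mathbb{R}^n)\subset A_\infty(\mathbb{R}^n)$ by parts~(i) and~(ii)), run the Calder\'on--Zygmund stopping-cube algorithm to produce a sparse family $\mathcal{S}\subset\mathscr{D}$ with pairwise disjoint sets $E_Q\subset Q$ satisfying $|E_Q|\ge\frac12|Q|$ and $\mathcal{M}_{\mathscr{D}}(f)\lesssim\sum_{Q\in\mathcal{S}}\langle f\rangle_Q\mathbf{1}_{E_Q}$, where $\langle f\rangle_Q:=\frac{1}{|Q|}\int_Qf(x)\,dx$. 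Combining the elementary inequality $(\sigma(Q)/|Q|)^p\,\omega(Q)\le[\omega]_{A_p(\mathbb{R}^n)}\,\sigma(Q)$ (immediate from \eqref{Ap}, with $\omega(Q):=\int_Q\omega$ and $\sigma(Q):=\int_Q\sigma$), the $A_\infty$ property of $\sigma$ to pass from $\sigma(Q)$ to $\sigma(E_Q)$, the disjointness of $\{E_Q\}_{Q\in\mathcal{S}}$, the $L^p(\sigma)$-boundedness of the $\sigma$-dyadic maximal operator, and the identity $\sigma^{1-p}=\omega$, one reaches an estimate of the form $\|\mathcal{M}(f)\|_{L^p_\omega(\mathbb{R}^n)}^p\lesssim[\omega]_{A_p(\mathbb{R}^n)}\,[\sigma]_{A_\infty(\mathbb{R}^n)}\int_{\mathbb{R}^n}f^p\omega\,dx$. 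The hard part will be extracting the \emph{sharp} exponent $p'-1$: a crude use of the $A_\infty$ factor overshoots, so one must quantify the $A_\infty$ constant of $\sigma$ by the Fujii--Wilson constant and invoke the sharp reverse H\"older inequality so that $[\sigma]_{A_\infty(\mathbb{R}^n)}\lesssim[\sigma]_{A_{p'}(\mathbb{R}^n)}=[\omega]_{A_p(\mathbb{R}^n)}^{p'-1}$ enters \emph{linearly}; together with $1+(p'-1)=p'$ and $p'/p=p'-1$ this produces precisely $\|\mathcal{M}(f)\|_{L^p_\omega(\mathbb{R}^n)}\lesssim[\omega]_{A_p(\mathbb{R}^n)}^{p'-1}\|f\|_{L^p_\omega(\mathbb{R}^n)}$. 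More economically, part~(iii) can simply be quoted from \cite[Theorem~7.1.9]{g2014} or \cite[Theorem~2.7.4]{dhhr2011}.
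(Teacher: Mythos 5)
Your proposal is correct, but it takes a different route from the paper: the paper does not prove Lemma~\ref{ApProperty} at all, it simply records that these facts can be found in \cite[Proposition~7.1.5 and Theorem~7.1.9]{g2014} and \cite[Theorem~2.7.4]{dhhr2011}, which you also offer as the economical alternative at the end. Your arguments for (i) and (ii) are the standard ones and are correct: the concave-Jensen step with $\theta=(q'-1)/(p'-1)$ together with the identities $(q-1)(q'-1)=1=(p-1)(p'-1)$ in~(i), and in~(ii) the duality via the isometry $f\mapsto f\omega^{1/p}$ combined with the exact identity $[\omega^{1-p'}]_{A_{p'}}=[\omega]_{A_p}^{p'-1}$, which you then reuse in~(iii). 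Your sketch of~(iii) has the right architecture (the mixed bound $\|\mathcal{M}\|^p_{L^p_\omega\to L^p_\omega}\lesssim[\omega]_{A_p}[\sigma]_{A_\infty}$ with $\sigma:=\omega^{1-p'}$, then $[\sigma]_{A_\infty}\lesssim[\sigma]_{A_{p'}}=[\omega]_{A_p}^{p'-1}$, and finally $p'/p=p'-1$), but one step is phrased too loosely: merely ``passing from $\sigma(Q)$ to $\sigma(E_Q)$ by the $A_\infty$ property'' does not by itself produce a factor that is \emph{linear} in $[\sigma]_{A_\infty}$; the standard Hyt\"onen--P\'erez argument instead verifies the Carleson packing condition $\sum_{Q'\subset Q,\,Q'\in\mathcal{S}}\sigma(Q')\lesssim[\sigma]_{A_\infty}\sigma(Q)$ for the sparse family and then applies the Carleson embedding theorem, and the sharp reverse H\"older inequality is in fact not needed for this particular estimate. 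Since Lemma~\ref{ApProperty} is only invoked as a black box in the rest of the paper, the citation route is the shorter choice, but your elementary reductions in~(i) and~(ii) make the statement self-contained at essentially no cost.
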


\section{Characterization of Homogeneous Ball
Banach Sobolev Spaces}
\label{section5}

Let $X(\mathbb{R}^n)$ be a ball Banach function space,
$\Omega\subset\mathbb{R}^n$ an open set,
and $\dot{W}^{1,X}(\Omega)$ the homogeneous ball
Banach Sobolev space.
The main target of this section is to establish
a characterization
of $\dot{W}^{1,X}(\Omega)$ in spirit of \eqref{1628}.

\subsection{Characterization of $\dot{W}^{1,X}(\mathbb{R}^n)$}
\label{sub3.1}

In this subsection, we consider the case $\Omega=\mathbb{R}^n$
and establish
the following characterization
of $\dot{W}^{1,X}(\mathbb{R}^n)$.

\begin{theorem}\label{1931}
Let $X(\mathbb{R}^n)$ be a ball Banach function space,
$p\in[1,\infty)$,
and $\gamma\in\mathbb{R}\setminus\{0\}$.
Assume that
\begin{enumerate}
\item[\textup{(i)}]
$X^\frac{1}{p}(\mathbb{R}^n)$ is a ball Banach function space;
\item[\textup{(ii)}]
both $X(\mathbb{R}^n)$ and $X'(\mathbb{R}^n)$
have absolutely continuous norms;
\item[\textup{(iii)}]
the Hardy--Littlewood maximal operator $\mathcal{M}$
is bounded on $[X^\frac{1}{p}(\mathbb{R}^n)]'$;
\item[\textup{(iv)}]
if $p=1$, assume further that $\mathcal{M}$
is bounded on $X(\mathbb{R}^n)$.
\end{enumerate}
Then
$f\in\dot{W}^{1,X}(\mathbb{R}^n)$
if and only if $f\in L^1_{{\mathrm{loc}}}(\mathbb{R}^n)$ and
\begin{align}\label{2127}
\sup_{\lambda\in(0,\infty)}\lambda
\left\|\left[\int_{\mathbb{R}^n}
\mathbf{1}_{E_{\lambda,\frac{\gamma}{p}}[f]}(\cdot,y)
\left|\cdot-y\right|^{\gamma-n}\,dy
\right]^\frac{1}{p}\right\|_{X(\mathbb{R}^n)}<\infty,
\end{align}
where $E_{\lambda,\frac{\gamma}{p}}[f]$
for any $\lambda\in(0,\infty)$
is the same as in \eqref{Elambda};
moreover, if this holds true for a function $f$,
then
\begin{align*}
\sup_{\lambda\in(0,\infty)}\lambda
\left\|\left[\int_{\mathbb{R}^n}
\mathbf{1}_{E_{\lambda,\frac{\gamma}{p}}[f]}(\cdot,y)
\left|\cdot-y\right|^{\gamma-n}\,dy\right]^\frac{1}{p}
\right\|_{X(\mathbb{R}^n)}
\sim\left\|\,|\nabla f|\,\right\|_{X(\mathbb{R}^n)},
\end{align*}
where the positive equivalence constants are independent of $f$.
\end{theorem}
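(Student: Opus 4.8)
The plan is to obtain the theorem from two recently established tools together with a localization scheme. Throughout, let $\Phi(f)$ denote the supremum appearing in \eqref{2127}, that is, $\Phi(f):=\sup_{\lambda\in(0,\infty)}\lambda\|[\int_{\mathbb{R}^n}\mathbf{1}_{E_{\lambda,\frac{\gamma}{p}}[f]}(\cdot,y)|\cdot-y|^{\gamma-n}\,dy]^{1/p}\|_{X(\mathbb{R}^n)}$. The first tool is the generalized Brezis--Seeger--Van Schaftingen--Yung formula on $X(\mathbb{R}^n)$ (Lemma~\ref{1016}): for every $f\in\dot{W}^{1,X}(\mathbb{R}^n)$ one has $\Phi(f)\sim\|\,|\nabla f|\,\|_{X(\mathbb{R}^n)}$. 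This already delivers the ``only if'' part together with the full ``moreover'' equivalence, so the genuine task is the ``if'' direction. The second tool is the Bourgain--Brezis--Mironescu-type characterization of the \emph{inhomogeneous} space $W^{1,X}(\Omega)$ on bounded open sets (Lemma~\ref{2006}): a function $g$ lies in $W^{1,X}(B)$, for a ball $B$, as soon as $g\in X(B)$ and the associated fractional functional of $g$ on $B$ is finite. Given these, assume $f\in L^1_{\mathrm{loc}}(\mathbb{R}^n)$ and $M:=\Phi(f)<\infty$; one must produce a weak gradient of $f$ with $|\nabla f|\in X(\mathbb{R}^n)$.

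\textbf{Step 1 (local regularity).}
Fix a ball $B$ with $d:=\operatorname{diam}B$. The analytic heart is a ``weak-type implies fractional-seminorm'' estimate: for each $s\in(0,1)$, the $X(B)$-norm of $[\int_{B}|f(\cdot)-f(y)|^{p}|\cdot-y|^{-sp-n}\,dy]^{1/p}$ is controlled, up to a constant depending only on $d$ and blowing up like $(1-s)^{-1/p}$ as $s\to1^{-}$, by $M+\|f\|_{X(B)}$. One proves this by fixing $x$, splitting the $y$-integral over $\{y:\,|f(x)-f(y)|\le\lambda|x-y|^{1+\frac{\gamma}{p}}\}$ and its complement, bounding the first piece by $\lambda^{p}\int_{|z|<d}|z|^{(1-s)p+\gamma-n}\,dz$, and handling the second by a layer-cake decomposition in $\lambda$ together with the defining inequality of $M$. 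Since $X(\mathbb{R}^n)$ is neither translation- nor rotation-invariant and carries no explicit norm, the corresponding scalar inequality is first established in weighted Lebesgue spaces $L^{p}_{\omega}(B)$ with $\omega\in A_{1}(\mathbb{R}^n)$ and then lifted to $X(B)$ by the extrapolation lemma (Lemma~\ref{4.6}), using that $\mathcal{M}$ is bounded on $[X^{1/p}(\mathbb{R}^n)]'$ and, when $p=1$, on $X(\mathbb{R}^n)$. Feeding a (fractional) Poincar\'e inequality on $B$ into this bound yields $(f-f_{B})\mathbf{1}_{B}\in X(\mathbb{R}^n)$, hence $f\in X(B)$ because $\mathbf{1}_{B}\in X(\mathbb{R}^n)$, and it shows the fractional functional of $f$ on $B$ to be finite. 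Lemma~\ref{2006} then gives $f\in W^{1,X}(B)$, so $f$ has weak partial derivatives on $B$ with $|\nabla f|\in X(B)$. Letting $B$ range over all balls, these local gradients agree on overlaps and assemble into a weak gradient $\nabla f$ on $\mathbb{R}^n$ with $|\nabla f|\,\mathbf{1}_{B}\in X(\mathbb{R}^n)$ for every ball $B$.

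\textbf{Step 2 (globalization).}
To pass from $|\nabla f|\in X(B)$ for all $B$ to $|\nabla f|\in X(\mathbb{R}^n)$ with a bound by $M$, I re-enter Lemma~\ref{1016} via cutoffs. For $R\in(0,\infty)$ choose $\psi_{R}\in C_{\mathrm c}^{\infty}(B_{2R})$ with $\psi_{R}\equiv1$ on $B_{R}$ and $|\nabla\psi_{R}|\lesssim R^{-1}$, and set $g_{R}:=(f-f_{B_{2R}})\psi_{R}$. By Step 1, $g_{R}\in\dot{W}^{1,X}(\mathbb{R}^n)$ and has compact support, so Lemma~\ref{1016} gives $\|\,|\nabla g_{R}|\,\|_{X(\mathbb{R}^n)}\sim\Phi(g_{R})$; since $\nabla g_{R}=\nabla f$ on $B_{R}$, this bounds $\|\,|\nabla f|\,\mathbf{1}_{B_{R}}\|_{X(\mathbb{R}^n)}$ by $\Phi(g_{R})$ up to a universal constant. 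The pointwise inequality $|g_{R}(x)-g_{R}(y)|\le|f(x)-f(y)|+CR^{-1}|f(y)-f_{B_{2R}}|\,|x-y|$ splits $\Phi(g_{R})$ into a term dominated by $M$ and an error governed by $R^{-1}\|(f-f_{B_{2R}})\mathbf{1}_{B_{2R}}\|_{X(\mathbb{R}^n)}$ carrying a uniformly bounded geometric factor, which yields $\sup_{R\in(0,\infty)}\|\,|\nabla f|\,\mathbf{1}_{B_{R}}\|_{X(\mathbb{R}^n)}\lesssim M$. Since $|\nabla f|\,\mathbf{1}_{B_{R}}\uparrow|\nabla f|$, the Fatou property in Definition~\ref{1659}(iii) (and, where intermediate weak limits must be identified, the Alaoglu theorem applied to $X(\mathbb{R}^n)=[X'(\mathbb{R}^n)]^{*}$, valid because both $X(\mathbb{R}^n)$ and $X'(\mathbb{R}^n)$ have absolutely continuous norms) gives $\|\,|\nabla f|\,\|_{X(\mathbb{R}^n)}\lesssim M<\infty$. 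Hence $f\in\dot{W}^{1,X}(\mathbb{R}^n)$, and the equivalence $\Phi(f)\sim\|\,|\nabla f|\,\|_{X(\mathbb{R}^n)}$ then follows from Lemma~\ref{1016}.

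\textbf{Main obstacle.}
The delicate point is the fractional estimate of Step 1 and its Poincar\'e counterpart: the layer-cake splitting generates integrals such as $\int_{|z|<d}|z|^{(1-s)p+\gamma-n}\,dz$ and $\int_{|z|<d}|z|^{p+\gamma}\,dz$ whose convergence near $z=\mathbf{0}$ is precisely what dictates the admissible range of $(p,\gamma)$, and the $\lambda$-decomposition must be organized so that the Minkowski-type inequality in $X^{1/p}(\mathbb{R}^n)$ does not destroy the gain $(1-s)^{-1/p}$; moreover, because $X(\mathbb{R}^n)$ possesses neither translation nor rotation invariance and no usable formula for its norm, every averaging step and every appeal to the Hardy--Littlewood maximal operator must be routed through the extrapolation machinery of Lemma~\ref{4.6} rather than performed directly. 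Once this scalar-weighted-to-$X$ transfer is secured, the remainder is bookkeeping with the structural properties of ball Banach function spaces collected in Section~\ref{section2}.
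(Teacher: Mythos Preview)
Your overall architecture---necessity via Lemma~\ref{1016}, sufficiency via the local BBM characterization (Lemma~\ref{2006}) on balls followed by a globalization---matches the paper's. The gap is in how you localize. You propose to work directly with $f$ on a ball $B$ and to show $f\in X(B)$ by a fractional Poincar\'e inequality fed by your ``weak-type implies fractional seminorm'' estimate. But that estimate, as you state it, bounds the fractional seminorm by $M+\|f\|_{X(B)}$, so invoking Poincar\'e to deduce $(f-f_B)\mathbf 1_B\in X(\mathbb{R}^n)$ and hence $f\in X(B)$ is circular: you need $\|f\|_{X(B)}<\infty$ to run the estimate in the first place. Nor can the estimate be sharpened to depend on $M$ alone: on the ``bad'' set $\{|f(x)-f(y)|>\lambda|x-y|^{1+\gamma/p}\}$ the integrand still carries the factor $|f(x)-f(y)|^p$, and a layer-cake in $\lambda$ does not collapse it to the level-set functional defining $M$ without an a priori bound on $|f|$. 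Your Step~2 inherits the same problem, since $g_R=(f-f_{B_{2R}})\psi_R\in\dot W^{1,X}(\mathbb{R}^n)$ already requires $f\in X(B_{2R})$.

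The paper sidesteps this circularity by truncating in the \emph{range} rather than the domain: it sets $f_m:=\max(\min(f,m),-m)$ (see \eqref{fnR}), so that $|f_m|\le m$ and hence $f_{m}|_{B_R}\in X(B_R)$ trivially for every $R$. The analogue of your Step~1 estimate (Lemma~\ref{2031}, proved via the weighted H\"older-type inequality \eqref{2145} and extrapolation) then gives a bound on the BBM liminf for $f_m$ in which the extra factors $(2m)^{p\theta}\|\mathbf 1_{B_R}\|_{X}^{p\theta}$ disappear as $s\to 1^-$ (i.e., $\theta\to 0$), yielding $\sup_m\|\,|\nabla f_m|\,\|_{X(\mathbb{R}^n)}\lesssim M$ uniformly. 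The passage from $f_m$ to $f$ is then a single weak-$*$ limit via Alaoglu (using that both $X$ and $X'$ have absolutely continuous norms, so $X=[X']^*$), identifying $\partial_j f$ with the limit of $\partial_j f_{m_k}$; no cutoff error terms arise. If you want to repair your argument, replace the spatial cutoffs by these range truncations.
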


To show Theorem~\ref{1931},
we need some technical lemmas.
The following is a part of \cite[Theorem~3.29]{zyy2023}.

\begin{lemma}\label{1016}
Let $X(\mathbb{R}^n)$ be a ball Banach function space,
$p\in[1,\infty)$,
and $\gamma\in\mathbb{R}\setminus\{0\}$.
Assume that
\begin{enumerate}
\item[\textup{(i)}]
$X^\frac{1}{p}(\mathbb{R}^n)$ is a ball Banach function space;
\item[\textup{(ii)}]
$X(\mathbb{R}^n)$ has an absolutely continuous norm;
\item[\textup{(iii)}]
the Hardy--Littlewood maximal operator $\mathcal{M}$ is bounded on
both $X(\mathbb{R}^n)$ and $[X^\frac{1}{p}(\mathbb{R}^n)]'$.
\end{enumerate}
Then there exists a positive constant
$C$ such that,
for any $f\in\dot{W}^{1,X}(\mathbb{R}^n)$,
\begin{align*}
\sup_{\lambda\in(0,\infty)}\lambda
\left\|\left[\int_{\mathbb{R}^n}
\mathbf{1}_{E_{\lambda,\frac{\gamma}{p}}[f]}(\cdot,y)
\left|\cdot-y\right|^{\gamma-n}\,dy
\right]^\frac{1}{p}\right\|_{X(\mathbb{R}^n)}
\leq C\left\|\,\left|\nabla f\right|\,\right\|_{X(\mathbb{R}^n)},
\end{align*}
where $E_{\lambda,\frac{\gamma}{p}}[f]$
for any $\lambda\in(0,\infty)$
is the same as in \eqref{Elambda}.
\end{lemma}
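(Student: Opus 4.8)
We sketch how to prove Lemma~\ref{1016}. The plan is to run the maximal-function argument of Brezis--Seeger--Van Schaftingen--Yung, but to replace the symmetrization step (which uses only the scalar structure of $L^p$) by a duality argument in the convexification $X^{\frac1p}(\mathbb{R}^n)$ and its associate space. Fix $f\in\dot{W}^{1,X}(\mathbb{R}^n)$ and, for $\lambda\in(0,\infty)$, set
$$
G_\lambda(x):=\int_{\mathbb{R}^n}\mathbf{1}_{E_{\lambda,\frac{\gamma}{p}}[f]}(x,y)\,|x-y|^{\gamma-n}\,dy,\qquad x\in\mathbb{R}^n.
$$
By Definition~\ref{tuhua}, $\|G_\lambda^{1/p}\|_{X(\mathbb{R}^n)}=\|G_\lambda\|_{X^{\frac1p}(\mathbb{R}^n)}^{1/p}$ and $\|\,|\nabla f|\,\|_{X(\mathbb{R}^n)}^p=\|\,|\nabla f|^p\,\|_{X^{\frac1p}(\mathbb{R}^n)}$, so it suffices to produce a constant $C$, independent of $f$ and $\lambda$, with $\lambda^p\|G_\lambda\|_{X^{\frac1p}(\mathbb{R}^n)}\le C\|\,|\nabla f|^p\,\|_{X^{\frac1p}(\mathbb{R}^n)}$. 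Since $X(\mathbb{R}^n)$ is a ball Banach function space, Definition~\ref{1659}(vi) gives $|\nabla f|\in L^1_{\mathrm{loc}}(\mathbb{R}^n)$, hence $f\in W^{1,1}_{\mathrm{loc}}(\mathbb{R}^n)$; the classical Riesz-potential estimate for Sobolev functions then yields a dimensional constant $c_n$ with
$$
|f(x)-f(y)|\le c_n\,|x-y|\left[\mathcal{M}(|\nabla f|)(x)+\mathcal{M}(|\nabla f|)(y)\right]
$$
for every pair of Lebesgue points $x,y$ of $f$. In particular, $(x,y)\in E_{\lambda,\frac{\gamma}{p}}[f]$ forces $\lambda\,|x-y|^{\gamma/p}\le c_n[\mathcal{M}(|\nabla f|)(x)+\mathcal{M}(|\nabla f|)(y)]$.

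Next I would split $G_\lambda=G_\lambda^{(1)}+G_\lambda^{(2)}$, where $G_\lambda^{(1)}(x)$ integrates only over those $y$ with $\mathcal{M}(|\nabla f|)(y)\le\mathcal{M}(|\nabla f|)(x)$ and $G_\lambda^{(2)}(x)$ over the rest. On the first region the preceding inequality reads $\lambda|x-y|^{\gamma/p}\le 2c_n\mathcal{M}(|\nabla f|)(x)$; when $\gamma>0$ this confines $y$ to the ball $B(x,R)$ with $R:=(2c_n\lambda^{-1}\mathcal{M}(|\nabla f|)(x))^{p/\gamma}$, where $|x-y|^{\gamma-n}$ is integrable at the centre, and a polar-coordinate computation gives $G_\lambda^{(1)}(x)\le\frac{\sigma_{n-1}}{\gamma}(2c_n)^p\lambda^{-p}[\mathcal{M}(|\nabla f|)(x)]^p$; when $\gamma<0$ the same inequality instead confines $y$ to $\mathbb{R}^n\setminus B(x,R)$, where $|x-y|^{\gamma-n}$ is integrable at infinity, and the integral again equals $\frac{\sigma_{n-1}}{|\gamma|}(2c_n)^p\lambda^{-p}[\mathcal{M}(|\nabla f|)(x)]^p$. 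Taking $1/p$-th powers, using $\|\cdot\|_{X(\mathbb{R}^n)}$-monotonicity, and invoking the boundedness of $\mathcal{M}$ on $X(\mathbb{R}^n)$ (part of hypothesis (iii)) then gives $\lambda\|(G_\lambda^{(1)})^{1/p}\|_{X(\mathbb{R}^n)}\lesssim\|\mathcal{M}(|\nabla f|)\|_{X(\mathbb{R}^n)}\lesssim\|\,|\nabla f|\,\|_{X(\mathbb{R}^n)}$.

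For the off-diagonal term $G_\lambda^{(2)}$ I would argue by duality. On its region of integration one has instead $\lambda|x-y|^{\gamma/p}\le 2c_n\mathcal{M}(|\nabla f|)(y)$, so (regardless of the sign of $\gamma$, and dropping the superfluous constraint $\mathcal{M}(|\nabla f|)(y)>\mathcal{M}(|\nabla f|)(x)$) the point $x$ is forced either into $B(y,R(y))$ or into its complement, where $R(y):=(2c_n\lambda^{-1})^{p/\gamma}[\mathcal{M}(|\nabla f|)(y)]^{p/\gamma}$. By hypothesis (i), $X^{\frac1p}(\mathbb{R}^n)$ is a ball Banach function space, hence coincides with its second associate space (see Remark~\ref{dual}(ii)), so
$$
\left\|G_\lambda^{(2)}\right\|_{X^{\frac1p}(\mathbb{R}^n)}=\sup\left\{\int_{\mathbb{R}^n}G_\lambda^{(2)}(x)\,h(x)\,dx:\ 0\le h,\ \|h\|_{[X^{\frac1p}(\mathbb{R}^n)]'}\le1\right\}.
$$
Fixing such an $h$, Tonelli's theorem together with a dyadic-annulus decomposition (for each fixed $y$) of the region $\{x:|x-y|^\gamma\lesssim\lambda^{-p}[\mathcal{M}(|\nabla f|)(y)]^p\}$ bounds the average of $h$ over each annulus by $\mathcal{M}(h)(y)$; since the resulting geometric series in the radii sums to $\lesssim R(y)^\gamma\,\mathcal{M}(h)(y)=(2c_n)^p\lambda^{-p}[\mathcal{M}(|\nabla f|)(y)]^p\mathcal{M}(h)(y)$, we obtain
$$
\int_{\mathbb{R}^n}G_\lambda^{(2)}(x)h(x)\,dx\lesssim\lambda^{-p}\int_{\mathbb{R}^n}[\mathcal{M}(|\nabla f|)(y)]^p\,\mathcal{M}(h)(y)\,dy.
$$
Hölder's inequality for $X^{\frac1p}(\mathbb{R}^n)$ and its associate space, followed by the boundedness of $\mathcal{M}$ on $[X^{\frac1p}(\mathbb{R}^n)]'$ (the other part of hypothesis (iii)) and then on $X(\mathbb{R}^n)$, bounds the right-hand side by $\lambda^{-p}\|[\mathcal{M}(|\nabla f|)]^p\|_{X^{\frac1p}(\mathbb{R}^n)}\|\mathcal{M}(h)\|_{[X^{\frac1p}(\mathbb{R}^n)]'}\lesssim\lambda^{-p}\|\,|\nabla f|\,\|_{X(\mathbb{R}^n)}^p$; taking the supremum over $h$ gives $\lambda\|(G_\lambda^{(2)})^{1/p}\|_{X(\mathbb{R}^n)}\lesssim\|\,|\nabla f|\,\|_{X(\mathbb{R}^n)}$. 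Adding the two pieces (using the triangle inequality in the Banach function space $X^{\frac1p}(\mathbb{R}^n)$) and taking the supremum over $\lambda\in(0,\infty)$ then finishes the proof; hypothesis (ii) enters only at a routine density/approximation step and is not where the difficulty lies.

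The main obstacle is exactly the off-diagonal term $G_\lambda^{(2)}$. In the $L^p$ setting of Brezis et al.\ one simply swaps the roles of $x$ and $y$ in the integral $\iint_{E_\lambda}|x-y|^{\gamma-n}\,dx\,dy$ to move the larger of $\mathcal{M}(|\nabla f|)(x)$, $\mathcal{M}(|\nabla f|)(y)$ to the outside; here the $X$-(quasi-)norm "sees" only the variable $x$, so this symmetrization is unavailable and one must instead transfer the offending maximal function of $|\nabla f|(y)$ onto the dual function $h$, which is precisely why the hypothesis that $\mathcal{M}$ be bounded on $[X^{\frac1p}(\mathbb{R}^n)]'$ (and not merely on $X(\mathbb{R}^n)$) is essential. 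A secondary but genuine nuisance is the sign of $\gamma$: a positive $\gamma$ localizes the admissible points near the diagonal (so one exploits local integrability of $|\cdot|^{\gamma-n}$ at the origin), whereas a negative $\gamma$ pushes them away from it (so one exploits integrability at infinity); both cases yield the same final estimate, so they merge once the radial integrals have been computed.
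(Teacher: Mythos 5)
Your argument is correct, but it does not follow the paper's route, because the paper does not actually prove Lemma~\ref{1016}: it quotes it as part of \cite[Theorem~3.29]{zyy2023}, and the method behind that cited result (as the introduction here describes) is to first prove a weighted weak-type estimate on $L^p_\omega(\mathbb{R}^n)$ for $\omega\in A_1(\mathbb{R}^n)$ with explicit dependence on $[\omega]_{A_1(\mathbb{R}^n)}$, and then to transfer it to $X(\mathbb{R}^n)$ via the Rubio de Francia-type extrapolation of Lemma~\ref{4.6}, i.e. by testing against the $A_1$-weights $R_{[X^{1/p}(\mathbb{R}^n)]'}g$. Your proof replaces that machinery by a direct dualization: the pointwise bound $|f(x)-f(y)|\le c_n|x-y|[\mathcal{M}(|\nabla f|)(x)+\mathcal{M}(|\nabla f|)(y)]$ off a null set (legitimate, since Definition~\ref{1659}(vi) gives $|\nabla f|\in L^1_{\mathrm{loc}}(\mathbb{R}^n)$, and altering $f$ on a null set changes $E_{\lambda,\frac{\gamma}{p}}[f]$ only by a null set); the diagonal/off-diagonal splitting according to which of $\mathcal{M}(|\nabla f|)(x)$, $\mathcal{M}(|\nabla f|)(y)$ dominates, which is exactly the right substitute for the $x\leftrightarrow y$ symmetrization of the $L^p$ argument since $\|\cdot\|_{X(\mathbb{R}^n)}$ acts only in $x$; and, for the off-diagonal piece, the norm identity of the second associate space (Remark~\ref{dual}(ii), using hypothesis (i)) together with the annulus estimate $\int_{A_y}|x-y|^{\gamma-n}h(x)\,dx\lesssim [R(y)]^{\gamma}\,\mathcal{M}(h)(y)$, whose geometric series converges in both cases precisely because $\gamma\neq0$. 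The remaining steps (H\"older for $X^{\frac1p}(\mathbb{R}^n)$ and its associate space, boundedness of $\mathcal{M}$ on $[X^{\frac1p}(\mathbb{R}^n)]'$ and on $X(\mathbb{R}^n)$, the triangle inequality in $X^{\frac1p}(\mathbb{R}^n)$, and $(a+b)^{1/p}\le a^{1/p}+b^{1/p}$) all check out, with constants depending only on $n$, $p$, $\gamma$ and the two operator norms of $\mathcal{M}$. What your route buys is a self-contained proof of this particular inequality that never uses the absolute continuity hypothesis (ii) — it plays no role in your argument, not even in a density step — and that needs only a single application of $\mathcal{M}$ to the dual function $h$ rather than the iterated construction $R_{[X^{1/p}(\mathbb{R}^n)]'}$; what the extrapolation route buys is that the same weighted estimates also feed the other, harder parts of \cite[Theorem~3.29]{zyy2023} (which your sketch does not address, nor does it need to for the statement as posed).
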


In what follows,
for any given $f\in L^1_{{\mathrm{loc}}}(\mathbb{R}^n)$,
the \emph{truncations}
$\{f_{m}\}_{m\in\mathbb{N}}$ of $f$
are defined by setting, for any $m\in\mathbb{N}$
and $x\in\mathbb{R}^n$,
\begin{align}\label{fnR}
f_{m}(x):=
\begin{cases}
f(x)&\text{ if}\ |f(x)|\leq m,\\
\displaystyle
m\frac{f(x)}{|f(x)|}&\text{ if}\ |f(x)|>m.
\end{cases}
\end{align}

\begin{lemma}\label{2031}
Let $X(\mathbb{R}^n)$ be a ball Banach function space,
$p\in[1,\infty)$,
and $\gamma\in\mathbb{R}\setminus\{0\}$.
Assume that
\begin{enumerate}
\item[\textup{(i)}]
$X^\frac{1}{p}(\mathbb{R}^n)$ is a ball Banach function space;
\item[\textup{(ii)}]
both $X(\mathbb{R}^n)$ and $X'(\mathbb{R}^n)$
have absolutely continuous norms;
\item[\textup{(iii)}]
the Hardy--littlewood maximal operator $\mathcal{M}$
is bounded on $[X^\frac{1}{p}(\mathbb{R}^n)]'$.
\end{enumerate}
Then there exists a positive constant $C$ such that, for any
$f\in L^1_{{\mathrm{loc}}}(\mathbb{R}^n)$ satisfying \eqref{2127}
and $m\in\mathbb{N}$,
\begin{align*}
\left\|\,\left|\nabla f_{m}\right|\,\right\|_{X(\mathbb{R}^n)}
\leq C\sup_{\lambda\in(0,\infty)}\lambda
\left\|\left[\int_{\mathbb{R}^n}
\mathbf{1}_{E_{\lambda,\frac{\gamma}{p}}[f]}(\cdot,y)
\left|\cdot-y\right|^{\gamma-n}\,dy
\right]^\frac{1}{p}\right\|_{X(\mathbb{R}^n)},
\end{align*}
where $f_{m}$ and $E_{\lambda,\frac{\gamma}{p}}[f]$
for any $\lambda\in(0,\infty)$
are the same as, respectively, in \eqref{fnR} and \eqref{Elambda}.
\end{lemma}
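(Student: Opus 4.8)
The plan is to reduce the ball Banach function space estimate to a weighted Lebesgue space estimate with $A_1(\mathbb{R}^n)$ weights via the method of extrapolation (Lemma~\ref{4.6}), and then prove the weighted estimate directly using the truncation structure. First, fix $m\in\mathbb{N}$ and observe that, by the chain-rule behaviour of truncations, $|\nabla f_m|\le|\nabla f|$ wherever $\nabla f$ exists; but we cannot use this yet since we do not know a priori that $f\in\dot W^{1,X}(\mathbb{R}^n)$. Instead, the key point is that $f_m$ is bounded, hence $f_m\in L^1_{\mathrm{loc}}(\mathbb{R}^n)$, and for any $\lambda\in(0,\infty)$ one has the pointwise inclusion $E_{\lambda,\frac{\gamma}{p}}[f_m]\subset E_{\lambda,\frac{\gamma}{p}}[f]$ because $|f_m(x)-f_m(y)|\le|f(x)-f(y)|$ for all $x,y$. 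Consequently the functional in \eqref{2127} for $f_m$ is dominated by that for $f$. So it suffices to show that, whenever $g\in L^1_{\mathrm{loc}}(\mathbb{R}^n)$ is bounded and satisfies \eqref{2127}, then $g\in\dot W^{1,X}(\mathbb{R}^n)$ with $\|\,|\nabla g|\,\|_{X(\mathbb{R}^n)}\lesssim\sup_{\lambda}\lambda\|[\int_{\mathbb{R}^n}\mathbf 1_{E_{\lambda,\gamma/p}[g]}(\cdot,y)|\cdot-y|^{\gamma-n}\,dy]^{1/p}\|_{X(\mathbb{R}^n)}$, with constant independent of $g$; then apply this to $g=f_m$ and use the domination above.

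To prove this reduced statement, I would first establish the analogous inequality in every weighted Lebesgue space $L^p_\omega(\mathbb{R}^n)$ with $\omega\in A_1(\mathbb{R}^n)$: that is,
\begin{align}\label{wtdaim}
\left\|\,|\nabla g|\,\right\|_{L^p_\omega(\mathbb{R}^n)}
\le C_{([\omega]_{A_1(\mathbb{R}^n)})}
\sup_{\lambda\in(0,\infty)}\lambda
\left\|\left[\int_{\mathbb{R}^n}
\mathbf 1_{E_{\lambda,\frac{\gamma}{p}}[g]}(\cdot,y)
|\cdot-y|^{\gamma-n}\,dy\right]^{\frac1p}\right\|_{L^p_\omega(\mathbb{R}^n)}
\end{align}
for all bounded $g$ satisfying the right-hand side finiteness, with $C$ depending on $\omega$ only through $[\omega]_{A_1(\mathbb{R}^n)}$. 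This is essentially the scalar Brezis--Seeger--Van Schaftingen--Yung lower bound in the weighted setting; it should follow from the arguments in \cite{bsvy.arxiv} combined with the $A_1$-weighted modifications already carried out in \cite{dlyyz.arxiv} and \cite{zyy2023} — one covers $\mathbb{R}^n$ by a mesh of cubes, uses the Poincar\'e inequality on each cube together with a bound relating the local oscillation of $g$ to the measure of $E_{\lambda,\gamma/p}[g]$, sums with the $A_1$ weight in place of Lebesgue measure, and optimizes over $\lambda$. Once \eqref{wtdaim} holds with the correct dependence on $[\omega]_{A_1(\mathbb{R}^n)}$, the extrapolation lemma (Lemma~\ref{4.6}) upgrades it to the ball Banach function space $X(\mathbb{R}^n)$ — here assumptions (i) (so that $X^{1/p}$ is a ball Banach function space, which is what extrapolation at exponent $p$ requires), (ii) (absolute continuity of $X$ and $X'$, ensuring the duality and the Fatou-type arguments used in extrapolation go through), and (iii) (boundedness of $\mathcal M$ on $[X^{1/p}(\mathbb{R}^n)]'$, the core hypothesis that triggers the extrapolation machinery for the pair $(X^{1/p},(X^{1/p})')$) are exactly what is consumed.

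Finally, going back from the bounded function $g$ to membership: since $g$ is bounded and the right side of \eqref{wtdaim} is finite, one obtains $|\nabla g|\in L^p_\omega(\mathbb{R}^n)$ for the relevant $\omega$, in particular $g\in W^{1,p}_{\mathrm{loc}}(\mathbb{R}^n)$, so the distributional gradient is genuinely a function and the extrapolated ball-space bound applies to it; this then yields $g\in\dot W^{1,X}(\mathbb{R}^n)$. Applying all of this with $g=f_m$ (which is bounded by $m$) gives the claimed inequality for $f_m$, with constant independent of both $f$ and $m$. The main obstacle I anticipate is proving the weighted scalar estimate \eqref{wtdaim} with constant depending on $\omega$ \emph{only} through $[\omega]_{A_1(\mathbb{R}^n)}$ — a quantitative dependence is mandatory for Rubio de Francia extrapolation to apply, so one must track this dependence carefully through the covering/Poincar\'e argument rather than simply invoking the qualitative $\mathbb{R}^n$-result of \cite{bsvy.arxiv}; a secondary technical point is justifying that one may work with the bounded truncations $f_m$ at all, i.e. that the inclusion $E_{\lambda,\gamma/p}[f_m]\subset E_{\lambda,\gamma/p}[f]$ really does let the finiteness hypothesis \eqref{2127} for $f$ transfer to $f_m$ uniformly in $m$.
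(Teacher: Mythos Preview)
Your high-level reductions are sound: the inclusion $E_{\lambda,\gamma/p}[f_m]\subset E_{\lambda,\gamma/p}[f]$ is correct, and the idea of reducing to bounded $g=f_m$ and then extrapolating from weighted $L^p_\omega$ with $\omega\in A_1(\mathbb{R}^n)$ is natural. However, the heart of your proposal --- the weighted scalar lower bound \eqref{wtdaim} --- is precisely the step that is \emph{not} available from the references you cite. The papers \cite{dlyyz.arxiv} and \cite{zyy2023} establish the \emph{upper} bound (the necessity direction, Lemma~\ref{1016} here), not the sufficiency direction you need; and the unweighted sufficiency argument of \cite{bsvy.arxiv} does not come packaged with a quantitative $[\omega]_{A_1}$-dependence. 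You correctly flag this as your main obstacle, but you do not indicate how to overcome it, and a ``covering/Poincar\'e'' sketch does not make clear how the boundedness of $g$ (which is essential --- the statement is false without it) would enter such an argument.

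The paper's proof takes a genuinely different route that sidesteps \eqref{wtdaim} entirely. Rather than bounding $\|\nabla f_m\|$ directly by the BSVY functional, it bounds the \emph{Gagliardo seminorm} of $f_m$ on balls $B_R$ by the BSVY functional, and then invokes the Bourgain--Brezis--Mironescu characterization (Lemma~\ref{2006}) to pass from the Gagliardo seminorm to $\|\nabla f_{m,R}\|_{X(B_R)}$. The Gagliardo-to-BSVY bound is achieved by a weak-$L^p$ interpolation trick (see \eqref{2145} in the paper): one writes
\[
\frac{|f_m(x)-f_m(y)|^p}{|x-y|^{n+sp}}
=\left[\frac{|f_m(x)-f_m(y)|}{|x-y|^{1+\gamma/p}}\right]^{p(1-\theta)}
\cdot(\text{remainder})\cdot|x-y|^{\gamma-n}
\]
with $\theta\sim 1-s$, applies the pairing between weak-$L^{1/(1-\theta)}$ and $L^{1/\theta}$, and controls the remainder using $|f_m|\le m$ and the finite radius $R$ (this is where boundedness of the truncation and localization to $B_R$ are genuinely used; the argument also splits into cases $\gamma>0$ and $\gamma<0$). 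Extrapolation (Lemma~\ref{4.6}) is used \emph{inside} this step to move between $X(B_R)$ and weighted integrals, not to transport a finished weighted inequality. Finally one lets $R\to\infty$ and patches the weak derivatives. This route buys you a complete proof without ever needing \eqref{wtdaim}; your route, if \eqref{wtdaim} could be established with the right constant tracking, would be somewhat more direct but would essentially require redoing the sufficiency half of \cite{bsvy.arxiv} in the $A_1$-weighted setting from scratch.
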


To prove Lemma~\ref{2031}, we need
the following two technical lemmas.
The following extrapolation lemma is just \cite[Remark~3.9]{zyy2023bbm}.

\begin{lemma}\label{4.6}
Let $X(\mathbb{R}^n)$ be a ball Banach
function space and $p\in[1,\infty)$.
Assume that $X^\frac{1}{p}(\mathbb{R}^n)$ is a
ball Banach function space and that
the Hardy--Littlewood maximal operator
$\mathcal{M}$ is bounded on $[X^\frac{1}{p}(\mathbb{R}^n)]'$
with its operator norm denoted by
$\|\mathcal{M}\|_{[X^\frac{1}{p}(\mathbb{R}^n)]'
\to[X^\frac{1}{p}(\mathbb{R}^n)]'}$.
Assume that $\Omega\subset\mathbb{R}^n$
is an open set.
Then, for any $f\in X(\Omega)$,
\begin{align*}
\|f\|_{X(\Omega)}\leq\sup_{\|g\|_{[X^\frac{1}{p}(\mathbb{R}^n)]'}=1}
\left[\int_{\Omega}
\left|f(x)\right|^pR_{[X^\frac{1}{p}(\mathbb{R}^n)]'}
g(x)\,dx\right]^\frac{1}{p}
\leq2^\frac{1}{p}\|f\|_{X(\Omega)},
\end{align*}
where, for any $g\in[X^\frac{1}{p}(\mathbb{R}^n)]'$,
\begin{align*}
R_{[X^\frac{1}{p}(\mathbb{R}^n)]'}g:=\sum_{k=0}^\infty
\frac{\mathcal{M}^kg}{2^k\|\mathcal{M}\|^k_{
[X^\frac{1}{p}(\mathbb{R}^n)]'\to
[X^\frac{1}{p}(\mathbb{R}^n)]'}}\in A_1(\mathbb{R}^n)
\end{align*}
and
$$
\left[R_{[X^\frac{1}{p}(\mathbb{R}^n)]'}g\right]_{A_1(\mathbb{R}^n)}\leq2
\|\mathcal{M}\|_{[X^\frac{1}{p}(\mathbb{R}^n)]'\to
[X^\frac{1}{p}(\mathbb{R}^n)]'}
$$
and where, for any $k\in\mathbb{N}$,
$\mathcal{M}^k$ is the $k$ iterations of $\mathcal{M}$
and $\mathcal{M}^0g:=|g|$.
\end{lemma}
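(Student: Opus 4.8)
The plan is to recognize Lemma~\ref{4.6} as the Rubio de Francia iteration (the ``extrapolation algorithm'') for the ball Banach function space $Y:=[X^{1/p}(\mathbb{R}^n)]'$, fed into the duality representation of $\|\cdot\|_{X(\Omega)}$ induced by the $p$-convexification $Z:=X^{1/p}(\mathbb{R}^n)$. Throughout I write $\|\mathcal{M}\|_Y$ for $\|\mathcal{M}\|_{[X^{1/p}(\mathbb{R}^n)]'\to[X^{1/p}(\mathbb{R}^n)]'}$. By hypothesis $Z$ is a ball Banach function space, hence so is $Y=Z'$ (Remark~\ref{dual}(i)) and $Z''=Z$ with equal norms (Remark~\ref{dual}(ii)); the latter furnishes both H\"older's inequality $\int_{\mathbb{R}^n}|h\phi|\le\|h\|_Z\|\phi\|_{Z'}$ and the representation $\|h\|_Z=\sup\{\|h\phi\|_{L^1(\mathbb{R}^n)}:\ \phi\in Z',\ \|\phi\|_{Z'}\le1\}$, valid for every $h\in\mathscr{M}(\mathbb{R}^n)$, the supremum being over a nonempty set since $\mathbf{1}_B\in Z'$ for every ball $B$.

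First I would record the three standard properties of $R:=R_{[X^{1/p}(\mathbb{R}^n)]'}$, valid for $g\in Y$ almost everywhere on $\mathbb{R}^n$. \textbf{(a)} $Rg\ge\mathcal{M}^0 g=|g|$, the $k=0$ summand. \textbf{(b)} Using $\|\mathcal{M}^k g\|_Y\le\|\mathcal{M}\|_Y^k\|g\|_Y$, the triangle inequality in $Y$ on partial sums, and Definition~\ref{1659}(iii) (monotone convergence of the norm, since the nonnegative partial sums increase to $Rg$), one gets $\|Rg\|_Y\le\sum_{k=0}^\infty 2^{-k}\|g\|_Y=2\|g\|_Y$. \textbf{(c)} Since $\mathcal{M}$ is sublinear and satisfies $\mathcal{M}(\sum_j h_j)\le\sum_j\mathcal{M}(h_j)$ for nonnegative $h_j$ (monotone convergence inside the defining averages), the reindexing $j=k+1$ gives
\[
\mathcal{M}(Rg)\le\sum_{k=0}^\infty\frac{\mathcal{M}^{k+1}g}{2^k\|\mathcal{M}\|_Y^k}
=2\|\mathcal{M}\|_Y\sum_{j=1}^\infty\frac{\mathcal{M}^j g}{2^j\|\mathcal{M}\|_Y^j}\le2\|\mathcal{M}\|_Y\,Rg,
\]
so that $Rg\in A_1(\mathbb{R}^n)$ with $[Rg]_{A_1(\mathbb{R}^n)}\le2\|\mathcal{M}\|_Y$; that $Rg$ is a genuine weight (locally integrable, a.e.\ finite) follows from $\|Rg\|_Y<\infty$ together with Definition~\ref{1659}(vi) for $Y$.

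Next I would assemble the representation of $\|f\|_{X(\Omega)}$. For $h\in X(\mathbb{R}^n)$, Definition~\ref{tuhua} gives $|h|^p\in Z$ (since $(|h|^p)^{1/p}=|h|\in X(\mathbb{R}^n)$) and $\|\,|h|^p\,\|_Z=\|(|h|^p)^{1/p}\|_{X(\mathbb{R}^n)}^p=\|h\|_{X(\mathbb{R}^n)}^p$. Letting $\widetilde{f}$ be the extension of $f$ by zero to $\mathbb{R}^n$ as in \eqref{1448}, Remark~\ref{norm}(i) yields $\|f\|_{X(\Omega)}^p=\|\widetilde{f}\|_{X(\mathbb{R}^n)}^p=\|\,|\widetilde{f}|^p\,\|_Z$; since $|\widetilde{f}|^p$ is supported in $\Omega$ and $\|\phi\|_{Z'}=\|\,|\phi|\,\|_{Z'}$, the representation of $\|\cdot\|_Z$ and homogeneity give
\[
\|f\|_{X(\Omega)}^p
=\sup_{\{\phi\in Z':\ \|\phi\|_{Z'}\le1\}}\int_{\mathbb{R}^n}|\widetilde{f}(x)|^p|\phi(x)|\,dx
=\sup_{\{\phi\in Z':\ \phi\ge0,\ \|\phi\|_{Z'}=1\}}\int_{\Omega}|f(x)|^p\phi(x)\,dx.
\]

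Finally the two inequalities drop out. For the lower bound, fix $g$ with $\|g\|_{Z'}=1$; since $Rg$ and $\int_\Omega|f|^p Rg$ depend on $g$ only through $|g|$, we may take $g\ge0$, and then (a) gives $\int_\Omega|f|^p Rg\ge\int_\Omega|f|^p g$, so the supremum over such $g$ together with the last display yields $\sup_{\|g\|_{Z'}=1}[\int_\Omega|f|^p Rg]^{1/p}\ge\|f\|_{X(\Omega)}$. For the upper bound, for any $g$ with $\|g\|_{Z'}=1$ property (b) gives $\|Rg\|_{Z'}\le2$, whence by H\"older's inequality and $\|\,|\widetilde{f}|^p\,\|_Z=\|f\|_{X(\Omega)}^p$,
\[
\int_\Omega|f(x)|^p Rg(x)\,dx\le\int_{\mathbb{R}^n}|\widetilde{f}(x)|^p Rg(x)\,dx\le\|\,|\widetilde{f}|^p\,\|_Z\,\|Rg\|_{Z'}\le2\,\|f\|_{X(\Omega)}^p;
\]
taking $p$-th roots and the supremum over $g$ gives $\sup_{\|g\|_{Z'}=1}[\int_\Omega|f|^p Rg]^{1/p}\le2^{1/p}\|f\|_{X(\Omega)}$, as required. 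The step that needs the most care --- rather than any genuine difficulty --- is the bookkeeping of the $p$-convexification (the identity $\|h\|_{X(\mathbb{R}^n)}^p=\|\,|h|^p\,\|_{X^{1/p}(\mathbb{R}^n)}$ and the reduction of $\|\cdot\|_{X(\Omega)}$ to extension by zero) together with the two countable-sum manipulations in the first step, which must be justified through Definition~\ref{1659}(iii) precisely because $X(\mathbb{R}^n)$ carries no explicit norm; everything else is the classical Rubio de Francia scheme.
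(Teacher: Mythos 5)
Your proposal is correct and is, in substance, the same argument as the paper's source: the paper states Lemma~\ref{4.6} without proof, citing \cite[Remark~3.9]{zyy2023bbm}, which is exactly the Rubio de Francia iteration for $[X^{\frac1p}(\mathbb{R}^n)]'$ (your properties (a)--(c)) combined with the duality $\|h\|_{X(\mathbb{R}^n)}^p=\|\,|h|^p\,\|_{X^{1/p}(\mathbb{R}^n)}=\sup_{\|\phi\|_{[X^{1/p}]'}\le1}\int|h|^p|\phi|$ via $Z''=Z$ and the zero-extension identity of Remark~\ref{norm}(i). The bookkeeping you flag (convexification identities, monotone convergence through Definition~\ref{1659}(iii) for the series bounds) is handled correctly, so no gap.
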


For any $R\in(0,\infty)$,
let $B_R:=B(\mathbf{0},R)$.
The following lemma can be deduced from
\cite[Corollary~5.9]{zyy2023bbm}
with $\Omega:=B_R$.

\begin{lemma}\label{2006}
Let $p\in[1,\infty)$ and $R\in(0,\infty)$.
Assume that $X(\mathbb{R}^n)$
is a ball Banach function space
satisfying the same assumptions as in Lemma~\ref{2031}.
Then $f\in W^{1,X}(B_R)$
if and only if $f\in X(B_R)$
and
\begin{align*}
\liminf_{s\to1^-}(1-s)^\frac{1}{p}\left\|\left[\int_{B_R}
\frac{|f(\cdot)-f(y)|^p}{|\cdot-y|^{n+sp}}
\,dy\right]^{\frac{1}{p}}\right\|_{X(B_R)}<\infty;
\end{align*}
moreover, for such $f$,
\begin{align*}
&\lim_{s\to1^-}(1-s)^\frac{1}{p}\left\|\left[\int_{B_R}
\frac{|f(\cdot)-f(y)|^p}{|\cdot-y|^{n+sp}}
\,dy\right]^{\frac{1}{p}}\right\|_{X(B_R)}\\
&\quad=\left[\frac{2\pi^\frac{n-1}{2}\Gamma(\frac{p+1}{2})}{p
\Gamma(\frac{p+n}{2})}\right]^\frac{1}{p}
\left\|\,\left|\nabla f\right|\,\right\|_{X(B_R)}.
\end{align*}
\end{lemma}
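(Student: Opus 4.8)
The plan is to obtain Lemma~\ref{2006} as the special case $\Omega:=B_R$ of the Bourgain--Brezis--Mironescu-type characterization of the inhomogeneous ball Banach Sobolev space on bounded $(\varepsilon,\infty)$-domains established in \cite[Corollary~5.9]{zyy2023bbm}. Thus I would not reprove the Bourgain--Brezis--Mironescu formula from scratch; instead I would verify that the ball $B_R$ and the space $X(\mathbb{R}^n)$ under consideration satisfy the hypotheses of that corollary, and then simply read off its conclusion.

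The first point to check is that $B_R=B(\mathbf{0},R)$ is a bounded $(\varepsilon,\infty)$-domain for some $\varepsilon\in(0,1]$. This is classical: $B_R$ is a bounded convex open set, hence in particular a bounded Lipschitz domain, and such domains are $(\varepsilon,\delta)$-domains in the sense of Jones; since $B_R$ is bounded one may take $\delta=\infty$ (indeed one may take $\varepsilon=1$, joining two points of $B_R$ by the straight segment between them and using convexity to control the distance to $\partial B_R$ along it). Hence $B_R$ lies within the scope of \cite[Corollary~5.9]{zyy2023bbm}. The second point is to match hypotheses: Lemma~\ref{2006} assumes exactly the assumptions of Lemma~\ref{2031}, that is, $X^{1/p}(\mathbb{R}^n)$ is a ball Banach function space, both $X(\mathbb{R}^n)$ and $X'(\mathbb{R}^n)$ have absolutely continuous norms, and $\mathcal{M}$ is bounded on $[X^{1/p}(\mathbb{R}^n)]'$; these are precisely the structural hypotheses required there. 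Here one uses Remark~\ref{norm} and Remark~\ref{dual} to pass these properties to the restrictive space $X(B_R)$ and its associate space, so that the statement of \cite[Corollary~5.9]{zyy2023bbm} makes sense with $X(\Omega)$ replaced by $X(B_R)$.

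Granting these two verifications, \cite[Corollary~5.9]{zyy2023bbm} applied with $\Omega:=B_R$ yields at once both the claimed equivalence --- namely $f\in W^{1,X}(B_R)$ if and only if $f\in X(B_R)$ and the $\liminf$ in the statement is finite --- and the limit identity, with the constant $\frac{2\pi^{(n-1)/2}\Gamma((p+1)/2)}{p\Gamma((p+n)/2)}$; this constant is dimension-dependent but domain-independent, so it is the very same one appearing in \eqref{1055} and in Lemma~\ref{2006}. The main, and essentially the only, obstacle is the bookkeeping in the hypothesis-matching step: one must ensure that no additional mild hypothesis demanded by \cite[Corollary~5.9]{zyy2023bbm} is missing here --- for instance, in the borderline case $p=1$ one should confirm that whatever is required of $\mathcal{M}$ on $X(\mathbb{R}^n)$ is already available, either because $[X^{1/p}(\mathbb{R}^n)]'=X'(\mathbb{R}^n)$ when $p=1$, or because it is part of the standing hypotheses inherited from Theorem~\ref{1931}. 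Apart from this routine check, no further work is needed.
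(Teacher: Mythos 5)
Your proposal is correct and coincides with the paper's own treatment: the paper obtains Lemma~\ref{2006} precisely by invoking \cite[Corollary~5.9]{zyy2023bbm} with $\Omega:=B_R$, the only implicit work being the same routine checks you describe (that $B_R$ is a bounded $(\varepsilon,\infty)$-domain and that the hypotheses of Lemma~\ref{2031} match those of the cited corollary). Nothing further is needed.
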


For any $F\in\mathscr{M}(\mathbb{R}^n\times\mathbb{R}^n)$
and $\lambda\in(0,\infty)$, let
$$
d_F(\lambda):=\left\{(x,y)\in\mathbb{R}^n\times\mathbb{R}^n:\
\left|F(x,y)\right|>\lambda\right\}.
$$
Let $\gamma\in\mathbb{R}$ and $\omega\in A_1(\mathbb{R}^n)$.
By \cite[Theorem~1.4.16(v)]{g2014},
we find that,
for any $F,G\in\mathscr{M}(\mathbb{R}^n\times\mathbb{R}^n)$
and $p\in(1,\infty)$,
\begin{align}\label{2145}
&\iint_{\mathbb{R}^n\times\mathbb{R}^n}
\left|F(x,y)G(x,y)\right||x-y|^{\gamma-n}\omega(x)\,dx\,dy\\
&\quad\leq p'\sup_{\lambda\in(0,\infty)}\lambda
\left[\mu\left(d_F(\lambda)\right)\right]^{\frac{1}{p}}
\int_0^\infty\left[\mu\left(d_G(\xi)
\right)\right]^{\frac{1}{p'}}\,d\xi\nonumber,
\end{align}
where, for any measurable set $E\subset\mathbb{R}^n\times\mathbb{R}^n$,
$$
\mu(E):=\int_{\{(x,y)\in E:\ x\neq y\}}
|x-y|^{\gamma-n}\omega(x)\,dx\,dy.
$$

Now, we are ready to show Lemma~\ref{2031}.

\begin{proof}[Proof of Lemma~\ref{2031}]
Let $f\in L^1_{{\mathrm{loc}}}(\mathbb{R}^n)$ satisfy \eqref{2127}.
Let $R\in(0,\infty)$ and let $f_{m,R}$ be
a function on $B_R$ defined by setting, for any $x\in B_R$,
$f_{m,R}(x):=f_m(x)$.
By this and both (ii) and (iv) of
\cite[Proposition~2.8]{zyy2023bbm}, we find that,
for any $m\in\mathbb{N}$ and $R\in(0,\infty)$,
$$
\left\|f_{m,R}\right\|_{X(B_R)}
\leq\|m\|_{X(B_R)}<\infty
$$
and hence $f_{m,R}\in X(B_R)$.

Next, we claim that, for any $m\in\mathbb{N}$
and $R\in(0,\infty)$,
\begin{align}\label{2011}
&\liminf_{s\to1^-}(1-s)^\frac{1}{p}\left\|\left[\int_{B_R}
\frac{|f_{m,R}(\cdot)-f_{m,R}(y)|^p}{|\cdot-y|^{n+sp}}
\,dy\right]^{\frac{1}{p}}\right\|_{X(B_R)}\\
&\quad\lesssim\sup_{\lambda\in(0,\infty)}\lambda
\left\|\left[\int_{\mathbb{R}^n}
\mathbf{1}_{E_{\lambda,\frac{\gamma}{p}}[f]}(\cdot,y)
\left|\cdot-y\right|^{\gamma-n}
\,dy\right]^\frac{1}{p}\right\|_{X(\mathbb{R}^n)}.\nonumber
\end{align}
To prove this claim,
we consider the following two cases on $\gamma$.

\emph{Case 1)} $\gamma\in(0,\infty)$. In this case,
let $s\in(0,1)$ and
$\theta:=\frac{p(1-s)}{p+\gamma}\in(0,1)$.
Then, from \eqref{2145} with $p:=\frac{1}{1-\theta}$,
it follows that,
for any $\omega\in A_1(\mathbb{R}^n)$,
\begin{align}\label{2207}
&\iint_{B_R\times B_R}
\frac{|f_{m,R}(x)-f_{m,R}(y)|^p}{|x-y|^{n+sp}}
\omega(x)\,dy\,dx\\
&\quad=\iint_{B_R\times B_R}
\left[\frac{|f_{m,R}(x)-f_{m,R}(y)|}{
|x-y|^{1+\frac{\gamma}{p}}}\right]^{p(1-\theta)}\nonumber\\
&\qquad\times\left|f_{m,R}(x)-f_{m,R}(y)\right|^{p\theta}
|x-y|^{\gamma-n}\omega(x)\,dy\,dx\nonumber\\
&\quad\leq\frac{1}{\theta}
\sup_{\lambda\in(0,\infty)}\lambda
\left[\iint_{B_R\times B_R}
\mathbf{1}_{E_{\lambda^\frac{1}{p(1-\theta)},
\frac{\gamma}{p}}[f_{m,R}]}(x,y)|x-y|^{\gamma-n}
\omega(x)\,dy\,dx\right]^{1-\theta}\nonumber\\
&\qquad\times\int_0^\infty
\Bigg[\iint_{B_R\times B_R}
\mathbf{1}_{\{(x,y):\
|f_{m,R}(x)-f_{m,R}(y)|^{p\theta}>\xi\}}(x,y)\nonumber\\
&\qquad\times|x-y|^{\gamma-n}
\omega(x)\,dy\,dx\Bigg]^{\theta}\,d\xi.\nonumber
\end{align}
Notice that, for any $x\in B_R$,
$|f_{m,R}(x)|\leq m$,
which further implies that, if
$$
\left\{(x,y)\in B_R\times B_R:\
\left|f_{m,R}(x)-f_{m,R}(y)\right|^{p\theta}
>\xi\right\}\neq\emptyset,
$$
then $\xi\in(0,(2m)^{p\theta})$.
On the other hand,
by the polar coordinate, we find that,
for any $\xi\in(0,\infty)$,
\begin{align*}
&\iint_{B_R\times B_R}
\mathbf{1}_{\{(x,y):\
|f_{m,R}(x)-f_{m,R}(y)|^{p\theta}>\xi\}}(x,y)|x-y|^{\gamma-n}
\omega(x)\,dy\,dx\\
&\quad\leq\int_{B_R}\left(\int_{B_R}
|x-y|^{\gamma-n}\,dy\right)\omega(x)\,dx\\
&\quad\leq\int_{B_R}\left(\sigma_{n-1}
\int_0^{2R}r^{\gamma-1}\,dr\right)\omega(x)\,dx\\
&\quad=\frac{\sigma_{n-1}}{\gamma}(2R)^{\gamma}
\int_{B_R}\omega(x)\,dx.
\end{align*}
From this with $\omega:=R_{[X^\frac{1}{p}(\mathbb{R}^n)]'}g$,
Lemma~\ref{4.6}, \eqref{2207},
Remark~\ref{norm},
and Definition~\ref{1659}(ii),
we infer that
\begin{align*}
&\left\|\left[\int_{B_R}
\frac{|f_{m,R}(\cdot)-f_{m,R}(y)|^p}{|\cdot-y|^{n+sp}}\,dy
\right]^{\frac{1}{p}}\right\|_{X(B_R)}^p\nonumber\\
&\quad\leq\sup_{\|g\|_{[X^\frac{1}{p}(\mathbb{R}^n)]'}=1}
\iint_{B_R\times B_R}
\frac{|f_{m,R}(x)-f_{m,R}(y)|^p}{|x-y|^{n+sp}}
R_{[X^\frac{1}{p}(\mathbb{R}^n)]'}g(x)\,dy\,dx\nonumber\\
&\quad\leq\frac{1}{\theta}\sup_{\|g\|_{[X^\frac{1}{p}(\mathbb{R}^n)]'}=1}
\sup_{\lambda\in(0,\infty)}
\lambda^{p(1-\theta)}\Bigg[\iint_{B_R\times B_R}
\mathbf{1}_{E_{\lambda,\frac{\gamma}{p}}
[f_{m,R}]}(x,y)\nonumber\\
&\qquad\times|x-y|^{\gamma-n}
R_{[X^\frac{1}{p}(\mathbb{R}^n)]'}g(x)\,dy\,dx\Bigg]^{1-\theta}\nonumber\\
&\qquad\times\int_0^{(2m)^{p\theta}}
\left[\frac{\sigma_{n-1}(2R)^{\gamma}}{\gamma}
\sup_{\|g\|_{[X^\frac{1}{p}(\mathbb{R}^n)]'}=1}
\int_{B_R}R_{[X^\frac{1}{p}(\mathbb{R}^n)]'}g(x)
\,dx\right]^{\theta}\,d\xi\nonumber\\
&\quad\leq\frac{2^{1-\theta}}{\theta}\left\{
\sup_{\lambda\in(0,\infty)}\lambda
\left\|\left[\int_{B_R}
\mathbf{1}_{E_{\lambda,\frac{\gamma}{p}}[f_{m,R}]}(\cdot,y)
\left|\cdot-y\right|^{\gamma-n}\,dy\right]^\frac{1}{p}\right\|_{X(B_R)}
\right\}^{p(1-\theta)}\nonumber\\
&\qquad\times(2m)^{p\theta}
\left[\frac{2\sigma_{n-1}(2R)^{\gamma}}{\gamma}\right]^{\theta}
\left\|1\right\|_{X(B_R)}^{p\theta}\nonumber\\
&\quad\leq\frac{2}{\theta}\left\{
\sup_{\lambda\in(0,\infty)}\lambda
\left\|\left[\int_{\mathbb{R}^n}
\mathbf{1}_{E_{\lambda,\frac{\gamma}{p}}[f]}(\cdot,y)
\left|\cdot-y\right|^{\gamma-n}\,dy
\right]^\frac{1}{p}\right\|_{X(\mathbb{R}^n)}
\right\}^{p(1-\theta)}\nonumber\\
&\qquad\times(2m)^{p\theta}
\left[\frac{\sigma_{n-1}(2R)^{\gamma}}{\gamma}\right]^{\theta}
\left\|\mathbf{1}_{B_R}\right\|_{X(\mathbb{R}^n)}^{p\theta},
\end{align*}
which, combined with both Definition~\ref{1659}(iv) and
the facts that $\theta\in(0,1)$,
$\frac{1-s}{\theta}=\frac{p+\gamma}{p}$, and
$\theta\to0$ as $s\to1^-$,
further implies that
\eqref{2011} in the
case $\gamma\in(0,\infty)$ holds true.

\emph{Case 2)} $\gamma\in(-\infty,0)$. In this case,
let $s\in(0,1)$ and
$\theta:=\frac{1-s}{2}\in(0,1)$.
Then, by \eqref{2145} with
$p:=\frac{1}{1-\theta}$, we conclude that,
for any $\omega\in A_1(\mathbb{R}^n)$,
\begin{align}\label{2013}
&\iint_{B_R\times B_R}
\frac{|f_{m,R}(x)-f_{m,R}(y)|^p}{|x-y|^{n+sp}}
\omega(x)\,dy\,dx\\
&\quad=\iint_{B_R\times B_R}
\left[\frac{|f_{m,R}(x)-f_{m,R}(y)|}{
|x-y|^{1+\frac{\gamma}{p}}}\right]^{p(1-\theta)}\nonumber\\
&\qquad\times\left[\frac{|f_{m,R}(x)-
f_{m,R}(y)|}{|x-y|^{-1+\frac{\gamma}{p}}}\right]^{p\theta}
|x-y|^{\gamma-n}\omega(x)\,dy\,dx\nonumber\\
&\quad\leq\frac{1}{\theta}
\sup_{\lambda\in(0,\infty)}\lambda
\left\{\iint_{B_R\times B_R}
\mathbf{1}_{E_{\lambda^{\frac{1}{
p(1-\theta)}},\frac{\gamma}{p}}[f_{m,R}]}(x,y)|x-y|^{\gamma-n}
\omega(x)\,dy\,dx\right\}^{1-\theta}\nonumber\\
&\qquad\times\int_0^\infty\Bigg[\iint_{B_R\times B_R}
\mathbf{1}_{\{(x,y):\ |f_{m,R}(x)-f_{m,R}(y)|^{p\theta}
|x-y|^{\theta(p-\gamma)}>\xi\}}(x,y)\nonumber\\
&\qquad\times|x-y|^{\gamma-n}
\omega(x)\,dy\,dx\Bigg]^{\theta}\,d\xi.\nonumber
\end{align}
Notice that $|f_{m,R}(x)|\leq m$ for any $x\in B_R$. Therefore, if
$$
\left\{(x,y)\in B_R\times B_R:\
\left|f_{m,R}(x)-f_{m,R}(y)\right|^{p\theta}
|x-y|^{\theta(p-\gamma)}>\xi\right\}\neq\emptyset,
$$
we then have $\xi\in(0,(2m)^{p\theta}(2R)^{\theta(p-\gamma)})$.
On the other hand,
from the polar coordinate, we deduce that,
for any $\xi\in(0,\infty)$,
\begin{align*}
&\iint_{B_R\times B_R}
\mathbf{1}_{\{(x,y):\ |f_{m,R}(x)-f_{m,R}(y)|^{p\theta}
|x-y|^{\theta(p-\gamma)}>\xi\}}(x,y)|x-y|^{\gamma-n}
\omega(x)\,dy\,dx\nonumber\\
&\quad\leq\iint_{B_R\times B_R}
\mathbf{1}_{\{(x,y):\ |x-y|^{\theta(p-\gamma)}
>\frac{\xi}{(2m)^{p\theta}}\}}(x,y)|x-y|^{\gamma-n}
\omega(x)\,dy\,dx\nonumber\\
&\quad\leq\int_{B_R}\left\{\sigma_{n-1}
\int_{[\frac{\xi}{(2m)^{p\theta}}]^{
\frac{1}{\theta(p-\gamma)}}}^\infty
r^{\gamma-1}\,dr\right\}
\omega(x)\,dx\nonumber\\
&\quad=\frac{\sigma_{n-1}}{|\gamma|}
\left[\frac{\xi}{(2m)^{p\theta}}
\right]^{\frac{\gamma}{\theta(p-\gamma)}}
\int_{B_R}\omega(x)\,dx.
\end{align*}
By this with $\omega:=R_{[X^\frac{1}{p}(\mathbb{R}^n)]'}g$,
Lemma~\ref{4.6}, \eqref{2013},
Remark~\ref{norm},
and Definition~\ref{1659}(ii),
we conclude that
\begin{align*}
&\left\|\left[\int_{B_R}
\frac{|f_{m,R}(\cdot)-f_{m,R}(y)|^p}{|\cdot-y|^{n+sp}}\,dy
\right]^{\frac{1}{p}}\right\|_{X(B_R)}^p\nonumber\\
&\quad\leq\sup_{\|g\|_{[X^\frac{1}{p}(\mathbb{R}^n)]'}=1}
\iint_{B_R\times B_R}
\frac{|f_{m,R}(x)-f_{m,R}(y)|^p}{|x-y|^{n+sp}}
R_{[X^\frac{1}{p}(\mathbb{R}^n)]'}g(x)\,dy\,dx\nonumber\\
&\quad\leq\frac{2^{1-\theta}}{\theta}
\left\{\sup_{\lambda\in(0,\infty)}\lambda
\left\|\left[\int_{B_R}
\mathbf{1}_{E_{\lambda,\frac{\gamma}{p}}[f_{m,R}]}(\cdot,y)
\left|\cdot-y\right|^{\gamma-n}\,dy\right]^\frac{1}{p}
\right\|_{X(B_R)}
\right\}^{p(1-\theta)}\nonumber\\
&\qquad\times\sup_{\|g\|_{[X^\frac{1}{p}(\mathbb{R}^n)]'}=1}
\left\{\int_{B_R}R_{[X^\frac{1}{p}(\mathbb{R}^n)]'}g(x)
\,dx\right\}^\theta\nonumber\\
&\qquad\times\left(\frac{\sigma_{n-1}}{|\gamma|}\right)^\theta
\left[\frac{1}{(2m)^{p\theta}}
\right]^{\frac{\gamma}{p-\gamma}}
\int_0^{(2m)^{p\theta}
(2R)^{\theta(p-\gamma)}}
\xi^{\frac{\gamma}{p-\gamma}}\,d\xi
\nonumber\\
&\quad\leq\frac{2}{\theta}
\left\{\sup_{\lambda\in(0,\infty)}\lambda
\left\|\left[\int_{\mathbb{R}^n}
\mathbf{1}_{E_{\lambda,\frac{\gamma}{p}}[f]}(\cdot,y)
\left|\cdot-y\right|^{\gamma-n}\,dy
\right]^\frac{1}{p}\right\|_{X(\mathbb{R}^n)}
\right\}^{p(1-\theta)}\nonumber\\
&\qquad\times\frac{p-\gamma}{p}\left(4mR\right)^{p\theta}
\left(\frac{\sigma_{n-1}}{|\gamma|}\right)^{\theta}
\|\mathbf{1}_{B_R}\|_{X(\mathbb{R}^n)}^{p\theta},
\end{align*}
which, together with Definition~\ref{1659}(iv) and
the facts that $\theta\in(0,1)$,
$\frac{1-s}{\theta}=2$, and
$\theta\to0$ as $s\to1^-$,
further implies that \eqref{2011}
in the case $\gamma\in(-\infty,0)$ holds true
and hence completes the proof of the above claim.

From the above claim, the above proven
conclusion that $f_{m,R}\in X(B_R)$,
and Lemma~\ref{2006},
we infer that, for any $m\in\mathbb{N}$ and $R\in(0,\infty)$,
$|\nabla f_{m,R}|\in X(B_R)$ and
\begin{align}\label{1032}
\left\|\,\left|\nabla f_{m,R}\right|\,\right\|_{X(B_R)}
&\sim\lim_{s\to1^-}(1-s)^\frac{1}{p}\left\|\left[\int_{B_R}
\frac{|f_{m}(\cdot)-f_{m}(y)|^p}{|\cdot-y|^{n+sp}}
\,dy\right]^{\frac{1}{p}}\right\|_{X(B_R)}\\
&\lesssim\sup_{\lambda\in(0,\infty)}\lambda
\left\|\left[\int_{\mathbb{R}^n}
\mathbf{1}_{E_{\lambda,\frac{\gamma}{p}}[f]}(\cdot,y)
\left|\cdot-y\right|^{\gamma-n}\,dy
\right]^\frac{1}{p}\right\|_{X(\mathbb{R}^n)},\nonumber
\end{align}
where the implicit positive constants are
independent of both $m$ and $R$.
On the other hand, by the definitions of
both $f_{m,R}$ and the weak derivatives,
we are easy to show that, for any given $0<R_1<R_2<\infty$
and for almost every $x\in B_{R_1}$,
\begin{align}\label{2242}
\nabla f_{m,R_1}(x)
=\nabla f_{m,R_2}(x).
\end{align}
Thus, we find that, for any $j\in\{1,\ldots,n\}$
and for almost every $x\in\mathbb{R}^n$,
\begin{align*}
f^{(j)}_m(x):=\lim_{R\in\mathbb{N},\,
R\to\infty}\partial_jf_{m,R}(x)
\end{align*}
is well defined.
Moreover, from \eqref{2242},
it is easy to prove that, for almost every $x\in B_R$
with $R\in\mathbb{N}$,
\begin{align}\label{1036}
f^{(j)}_m(x)=\partial_jf_{m,R}(x).
\end{align}

Now, we show that,
for any $j\in\{1,\ldots,n\}$,
$\partial_jf_m$ exists and is equal to $f^{(j)}_m$
almost everywhere in $\mathbb{R}^n$.
Indeed, by the definitions of
both $f_{m,R}$ and the weak derivatives
and \eqref{1036}, we conclude that,
for any $j\in\{1,\ldots,n\}$ and
$\phi\in C_{\mathrm{c}}^\infty(\mathbb{R}^n)$
with $\mathrm{supp\,}(\phi)\subset B_R$ for some $R\in\mathbb{N}$,
\begin{align*}
\int_{\mathbb{R}^n}f_m(x)\partial_j\phi(x)\,dx
&=\int_{B_R}f_{m,R}(x)\partial_j\phi(x)\,dx
=-\int_{B_R}\partial_jf_{m,R}(x)\phi(x)\,dx\\
&=-\int_{B_R}f^{(j)}_m(x)\phi(x)\,dx
=-\int_{\mathbb{R}^n}f^{(j)}_m(x)\phi(x)\,dx.
\end{align*}
From this, Definition~\ref{1659}(iii), and \eqref{1032},
we deduce that
\begin{align*}
\left\|\,\left|\nabla f_m\right|\,\right\|_{X(\mathbb{R}^n)}
&\sim\sum_{j=1}^n\left\|f_m^{(j)}\right\|_{X(\mathbb{R}^n)}
=\sum_{j=1}^n\lim_{R\in\mathbb{N},\,
R\to\infty}\left\|\widetilde{\partial_jf_{m,R}}
\right\|_{X(\mathbb{R}^n)}\\
&=\sum_{j=1}^n\lim_{R\in\mathbb{N},\,
R\to\infty}\left\|\partial_jf_{m,R}\right\|_{X(B_R)}\\
&\sim\lim_{R\in\mathbb{N},\,R\to\infty}
\left\|\,\left|\nabla f_{m,R}\right|\,\right\|_{X(B_R)}\\
&\lesssim\sup_{\lambda\in(0,\infty)}\lambda
\left\|\left[\int_{\mathbb{R}^n}
\mathbf{1}_{E_{\lambda,\frac{\gamma}{p}}[f]}(\cdot,y)
\left|\cdot-y\right|^{\gamma-n}\,dy
\right]^\frac{1}{p}\right\|_{X(\mathbb{R}^n)},
\end{align*}
where $\widetilde{\partial_jf_{m,R}}$
is defined the same as in \eqref{1448}
with $f$ replaced by $\partial_jf_{m,R}$,
which completes the proof of Lemma~\ref{2031}.
\end{proof}

The following lemma is a generalization
of \cite[Chapter 1, Corollary~4.4]{bs1988},
whose proof remains true for ball Banach function spaces;
we omit the details.

\begin{lemma}\label{reflexive}
Let $X(\mathbb{R}^n)$ be a ball Banach function space.
Let $\Omega\subset\mathbb{R}^n$ be an open set
and $X(\Omega)$ the restrictive space of $X(\mathbb{R}^n)$ on $\Omega$.
Then $X(\Omega)$ is reflexive if and only if
both $X(\Omega)$ and $X'(\Omega)$
have absolutely continuous norms on $\Omega$.
\end{lemma}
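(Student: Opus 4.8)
The plan is to follow the classical scheme of \cite[Chapter~1, Corollary~4.4]{bs1988}, checking at each step that every input used there has an available counterpart for the restrictive space $X(\Omega)$. The inputs needed are: the second associate space identity $X''(\Omega)=X(\Omega)$ of Remark~\ref{dual}(ii); the identification $X^*(\Omega)=X'(\Omega)$ valid whenever the norm of $X(\Omega)$ is absolutely continuous (Remark~\ref{dual}(iii)); the fact that $X'(\Omega)$ is again the restrictive space of a ball Banach function space (Remark~\ref{dual}(i) and~(iv)) and hence satisfies all the conditions in Definition~\ref{1659} with $\mathbb{R}^n$ replaced by $\Omega$ (Remark~\ref{norm}(ii)), in particular is complete (Remark~\ref{1052}(iv)); and the defining pairing of the associate space, namely $fg\in L^1(\Omega)$ for all $f\in X(\Omega)$ and $g\in X'(\Omega)$ (Definition~\ref{associte}). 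The distinction between balls and sets of finite measure never intervenes, since every argument below concerns only the norm and duality structure of the spaces.

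First I would prove sufficiency. Assume that both $X(\Omega)$ and $X'(\Omega)$ have absolutely continuous norms on $\Omega$. By Remark~\ref{dual}(iii), $X^*(\Omega)=X'(\Omega)$. Applying the same remark to the ball Banach function space $X'(\Omega)$, whose norm is absolutely continuous by hypothesis, together with the identity $[X'(\Omega)]'=X''(\Omega)=X(\Omega)$ from Remark~\ref{dual}(ii), we obtain $[X'(\Omega)]^*=X(\Omega)$. Composing, $X^{**}(\Omega)=[X^*(\Omega)]^*=[X'(\Omega)]^*=X(\Omega)$, and a standard verification shows that this chain of identifications is precisely the canonical embedding $f\mapsto\bigl[g\mapsto\int_\Omega fg\bigr]$ of $X(\Omega)$ into $X^{**}(\Omega)$; hence $X(\Omega)$ is reflexive.

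Next I would prove necessity. Assume that $X(\Omega)$ is reflexive. The map $g\mapsto\bigl[f\mapsto\int_\Omega fg\bigr]$ embeds $X'(\Omega)$ isometrically into $X^*(\Omega)$, with closed range because $X'(\Omega)$ is complete; since a closed subspace of a reflexive space is reflexive, $X'(\Omega)$ is reflexive too. It thus suffices to prove the claim that every reflexive ball Banach function space $Y$ on $\Omega$ has an absolutely continuous norm, and then apply it to $Y:=X(\Omega)$ and to $Y:=X'(\Omega)$. To prove the claim, suppose to the contrary that the norm of $Y$ is not absolutely continuous; then there exist a nonnegative $f\in Y$ and measurable sets $\{E_j\}_{j\in\mathbb{N}}$ of $\Omega$ with $\mathbf{1}_{E_j}\to0$ almost everywhere and $\inf_{j\in\mathbb{N}}\|f\mathbf{1}_{E_j}\|_Y=:\delta>0$. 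Passing to a subsequence $\{E_{j_k}\}_{k\in\mathbb{N}}$ and putting $F_k:=\bigcup_{l\geq k}E_{j_l}$ produces a decreasing sequence $\{F_k\}_{k\in\mathbb{N}}$ with $\mathbf{1}_{F_k}\to0$ almost everywhere (its intersection is $\limsup_l E_{j_l}$, a null set) and, by the lattice property (Definition~\ref{1659}(ii) via Remark~\ref{norm}(ii)), with $\|f\mathbf{1}_{F_k}\|_Y$ decreasing and $\|f\mathbf{1}_{F_k}\|_Y\geq\|f\mathbf{1}_{E_{j_k}}\|_Y\geq\delta$; hence $\|f\mathbf{1}_{F_k}\|_Y\downarrow c$ for some $c\geq\delta>0$. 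The sequence $h_k:=f\mathbf{1}_{F_k}$ is bounded in $Y$, so by reflexivity and the Eberlein--\v{S}mulian theorem a subsequence converges weakly, $h_{k_i}\rightharpoonup h$ in $Y$. For each $g\in Y'(\Omega)$ one has $gf\in L^1(\Omega)$ and $gf\mathbf{1}_{F_{k_i}}\to0$ almost everywhere, so dominated convergence gives $\int_\Omega gh=\lim_{i\to\infty}\int_\Omega gh_{k_i}=0$; as the functionals induced by $Y'(\Omega)$ separate the points of $Y(\Omega)$ (again by $Y''(\Omega)=Y(\Omega)$ and a sign change), this forces $h=0$, so $h_{k_i}\rightharpoonup0$. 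By Mazur's theorem there is a finite convex combination $u=\sum_i\lambda_i h_{k_i}$, with $\lambda_i\geq0$ and $\sum_i\lambda_i=1$, such that $\|u\|_Y<c$. Writing $K$ for the largest index $k_i$ that occurs and using that $\{F_k\}_k$ decreases, so that $h_{k_i}=f\mathbf{1}_{F_{k_i}}\geq f\mathbf{1}_{F_K}$ for every such $i$, the lattice property yields $u\geq f\mathbf{1}_{F_K}$ and therefore $\|u\|_Y\geq\|f\mathbf{1}_{F_K}\|_Y\geq c$, a contradiction. This proves the claim, hence necessity.

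I expect the necessity direction to be the main obstacle, and within it the reduction to a nested decreasing sequence of sets followed by the weak-compactness/Mazur argument; the delicate point is that the application of dominated convergence is legitimate, which is precisely what the integrability pairing in Definition~\ref{associte} guarantees, while $Y''(\Omega)=Y(\Omega)$ is what ensures that $Y'(\Omega)$ separates points. The sufficiency direction is, by contrast, a formal manipulation of the duality identities already recorded in Remark~\ref{dual}, the openness of $\Omega$ entering only through Remark~\ref{norm}(ii), which makes $X(\Omega)$ a genuine ball Banach function space.
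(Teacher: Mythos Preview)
Your proposal is correct and follows precisely the route indicated by the paper, which simply states that the proof of \cite[Chapter~1, Corollary~4.4]{bs1988} carries over verbatim to ball Banach function spaces and omits all details. You have supplied those details: sufficiency via the duality chain $X^{**}(\Omega)=[X'(\Omega)]^*=X''(\Omega)=X(\Omega)$, and necessity by showing directly (via weak sequential compactness, dominated convergence against $Y'$, and Mazur's lemma) that a reflexive ball Banach function space must have absolutely continuous norm, applied to both $X(\Omega)$ and the closed subspace $X'(\Omega)\hookrightarrow X^*(\Omega)$.
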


The following lemma is very useful, which shows that,
for any ball Banach function space $X(\mathbb{R}^n)$,
the boundedness of the Hardy--Littlewood
maximal operator on the associate space
of its convexification implies the boundedness
of $\mathcal{M}$ on $X(\mathbb{R}^n)$ itself
(or its convexification);
the converse may not be true.

\begin{lemma}\label{2005}
Let $X(\mathbb{R}^n)$ be a ball Banach function space
and $p\in[1,\infty)$.
Assume that $X^\frac{1}{p}(\mathbb{R}^n)$ is a
ball Banach function space and that
the Hardy--Littlewood maximal operator
$\mathcal{M}$ is bounded on $[X^\frac{1}{p}(\mathbb{R}^n)]'$.
\begin{enumerate}
\item[\rm(i)]
If $p\in(1,\infty)$, then
$\mathcal{M}$ is bounded on $X(\mathbb{R}^n)$.
\item[\rm(ii)]
If $p=1$, then, for any $\theta\in(1,\infty)$,
$\mathcal{M}$ is bounded on $X^\theta(\mathbb{R}^n)$.
\end{enumerate}
\end{lemma}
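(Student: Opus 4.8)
The plan is to reduce both parts to two ingredients already available: the Rubio de Francia–type extrapolation of Lemma~\ref{4.6} and the quantitative weighted boundedness of $\mathcal{M}$ recorded in Lemma~\ref{ApProperty}. Part (i) is proved by a direct extrapolation argument; part (ii) is then obtained by applying part (i) to the convexification $X^\theta(\mathbb{R}^n)$.

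First I would prove (i). Fix $p\in(1,\infty)$ and $f\in X(\mathbb{R}^n)$; since $\mathcal{M}f=\mathcal{M}(|f|)$ and $\||f|\|_{X(\mathbb{R}^n)}=\|f\|_{X(\mathbb{R}^n)}$, I may assume $f\geq0$. Because $\mathcal{M}f$ is not yet known to belong to $X(\mathbb{R}^n)$, I would not apply Lemma~\ref{4.6} to it directly; instead I work with the truncations $g_N:=\min\{\mathcal{M}f,N\}\mathbf{1}_{B(\mathbf{0},N)}$, which satisfy $0\leq g_N\leq N\mathbf{1}_{B(\mathbf{0},N)}\in X(\mathbb{R}^n)$, hence $g_N\in X(\mathbb{R}^n)$, and $g_N\uparrow\mathcal{M}f$ almost everywhere as $N\to\infty$. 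Applying the left-hand inequality of Lemma~\ref{4.6} to $g_N$ and using $g_N\leq\mathcal{M}f$ gives
\[
\|g_N\|_{X(\mathbb{R}^n)}\leq\sup_{\|g\|_{[X^{1/p}(\mathbb{R}^n)]'}=1}
\left[\int_{\mathbb{R}^n}\left|\mathcal{M}f(x)\right|^p
R_{[X^{1/p}(\mathbb{R}^n)]'}g(x)\,dx\right]^{1/p}.
\]
For each admissible $g$, Lemma~\ref{4.6} provides $w:=R_{[X^{1/p}(\mathbb{R}^n)]'}g\in A_1(\mathbb{R}^n)$ with $[w]_{A_1(\mathbb{R}^n)}\leq2\|\mathcal{M}\|_{[X^{1/p}(\mathbb{R}^n)]'\to[X^{1/p}(\mathbb{R}^n)]'}$; since $p>1$, Lemma~\ref{ApProperty}(i) gives $w\in A_p(\mathbb{R}^n)$ with the same, uniformly bounded, constant, and then the strong-type weighted bound of Lemma~\ref{ApProperty}(iii) yields $\int_{\mathbb{R}^n}|\mathcal{M}f|^pw\,dx\lesssim\int_{\mathbb{R}^n}|f|^pw\,dx$, with implicit constant depending only on $n$, $p$, and $\|\mathcal{M}\|_{[X^{1/p}(\mathbb{R}^n)]'\to[X^{1/p}(\mathbb{R}^n)]'}$. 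Inserting this into the previous display and invoking the right-hand inequality of Lemma~\ref{4.6} for $f$ gives $\|g_N\|_{X(\mathbb{R}^n)}\lesssim\|f\|_{X(\mathbb{R}^n)}$ uniformly in $N$, whereupon the Fatou property in Definition~\ref{1659}(iii) forces $\mathcal{M}f\in X(\mathbb{R}^n)$ with the claimed norm estimate.

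For (ii) the idea is simply to feed a convexification into (i). Given $\theta\in(1,\infty)$, I would set $Y(\mathbb{R}^n):=X^\theta(\mathbb{R}^n)$; since $\theta\geq1$, $Y(\mathbb{R}^n)$ is again a ball Banach function space (its triangle inequality follows from the convexity of $t\mapsto t^\theta$ on $[0,\infty)$, and the other axioms are routine), and $Y^{1/\theta}(\mathbb{R}^n)=X(\mathbb{R}^n)$ is one by hypothesis. Because $p=1$ here, one has $[Y^{1/\theta}(\mathbb{R}^n)]'=X'(\mathbb{R}^n)=[X^{1/p}(\mathbb{R}^n)]'$, on which $\mathcal{M}$ is assumed bounded. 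Thus (i), applied with the pair $(Y(\mathbb{R}^n),\theta)$ in place of $(X(\mathbb{R}^n),p)$, yields the boundedness of $\mathcal{M}$ on $Y(\mathbb{R}^n)=X^\theta(\mathbb{R}^n)$.

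I expect the only genuine obstacle to be the circularity flagged above: Lemma~\ref{4.6} is stated only for members of $X(\mathbb{R}^n)$, so it cannot be invoked for $\mathcal{M}f$ until one already knows $\mathcal{M}f\in X(\mathbb{R}^n)$; the truncation-plus-Fatou device is exactly what removes this. Everything else is bookkeeping. It is also worth recording where $p>1$ is essential in (i): Lemma~\ref{ApProperty}(iii) supplies only a \emph{strong}-type weighted inequality, valid for exponents strictly larger than $1$, so the argument cannot be run on $X(\mathbb{R}^n)$ when $p=1$; passing to $X^\theta(\mathbb{R}^n)$ with $\theta>1$ in (ii) is precisely what creates the room needed, which is why in that case one obtains boundedness on $X^\theta(\mathbb{R}^n)$ rather than on $X(\mathbb{R}^n)$ itself.
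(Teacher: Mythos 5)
Your proof is correct, and it takes a genuinely different route from the paper. For part (i), the paper does not run an extrapolation argument at all: it passes through Remark~\ref{920}(i) to view $X(\mathbb{R}^n)$ as a Banach function space in the sense of Definition~\ref{016} and then simply cites the equivalence of conditions (i) and (iii) in \cite[Theorem~3.1]{lz2023} to conclude that $\mathcal{M}$ is bounded on $X(\mathbb{R}^n)$. You instead prove (i) directly via Lemma~\ref{4.6} together with the quantitative weighted bound in Lemma~\ref{ApProperty}(iii), guarding against the circularity of applying Lemma~\ref{4.6} to $\mathcal{M}f$ before it is known to lie in $X(\mathbb{R}^n)$ by truncating and invoking the Fatou property of Definition~\ref{1659}(iii); this is sound, and the uniformity in $g$ of the $A_p$ constant $[R_{[X^{1/p}]'}g]_{A_p}\leq[R_{[X^{1/p}]'}g]_{A_1}\leq2\|\mathcal{M}\|_{[X^{1/p}]'\to[X^{1/p}]'}$ is exactly what makes the argument close. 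For part (ii), the paper runs precisely the extrapolation argument you give for (i) (with $p=1$ in Lemma~\ref{4.6}, $A_\theta$-weights, and Lemma~\ref{ApProperty}), applied at the level of $X^\theta$; you instead obtain (ii) as a formal corollary of (i) by feeding $Y:=X^\theta$ with exponent $\theta$ into part (i) and observing $[Y^{1/\theta}]'=X'=[X^{1/p}]'$ when $p=1$. The net effect is that your proof is more self-contained and unifies both parts under a single extrapolation computation, whereas the paper outsources (i) to an external result and keeps the extrapolation only for (ii). The one point worth making explicit in your write-up is the step where you assert that $X^\theta(\mathbb{R}^n)$ is a ball Banach function space for $\theta>1$: this is a standard fact for Banach lattices (the $\theta$-convexification of a Banach lattice is a Banach lattice when $\theta\geq1$), and the remaining axioms of Definition~\ref{1659} are indeed routine, but since the paper takes the $\frac{1}{p}$-convexification hypothesis seriously as a nontrivial assumption, a reader might want one line justifying why no analogous hypothesis is needed for the upward convexification.
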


\begin{proof}
From Remark~\ref{920}(i),
we infer that $X(\mathbb{R}^n)$ is also
a Banach function space in Definition~\ref{016}.
By this and the equivalence of both (i) and (iii) of
\cite[Theorem~3.1]{lz2023},
we conclude that (i) of the present lemma holds true.

Next, we show (ii).
From Definition~\ref{tuhua},
Lemma~\ref{4.6} with $p=1$, and
both (iii) and (i) of Lemma~\ref{ApProperty},
we deduce that, for any given $\theta\in(1,\infty)$
and for any $f\in X^\theta(\mathbb{R}^n)$,
\begin{align*}
\left\|\mathcal{M}(f)\right\|_{X^\theta(\mathbb{R}^n)}
&=\left\|\left[\mathcal{M}(f)\right]^\theta
\right\|_{X(\mathbb{R}^n)}^\frac{1}{\theta}\\
&\leq\sup_{\|g\|_{[X(\mathbb{R}^n)]'}=1}
\left\{\int_{\mathbb{R}^n}
\left[\mathcal{M}(f)(x)\right]^\theta
R_{[X(\mathbb{R}^n)]'}
g(x)\,dx\right\}^\frac{1}{\theta}\\
&\lesssim\sup_{\|g\|_{[X(\mathbb{R}^n)]'}=1}
\left[R_{[X(\mathbb{R}^n)]'}g\right]_{A_\theta(\mathbb{R}^n)}^{\theta'-1}
\left\{\int_{\mathbb{R}^n}
\left|f(x)\right|^\theta
R_{[X(\mathbb{R}^n)]'}
g(x)\,dx\right\}^\frac{1}{\theta}\\
&\leq\sup_{\|g\|_{[X(\mathbb{R}^n)]'}=1}
\left[R_{[X(\mathbb{R}^n)]'}g\right]_{A_1(\mathbb{R}^n)}^{\theta'-1}
\left\{\int_{\mathbb{R}^n}
\left|f(x)\right|^\theta
R_{[X(\mathbb{R}^n)]'}
g(x)\,dx\right\}^\frac{1}{\theta}\\
&\lesssim\|\mathcal{M}\|_{[X(\mathbb{R}^n)]'
\to[X(\mathbb{R}^n)]'}^{\theta'-1}
\sup_{\|g\|_{[X(\mathbb{R}^n)]'}=1}
\left\{\int_{\mathbb{R}^n}
\left|f(x)\right|^\theta
R_{[X(\mathbb{R}^n)]'}
g(x)\,dx\right\}^\frac{1}{\theta}\\
&\lesssim\|\mathcal{M}\|_{[X(\mathbb{R}^n)]'
\to[X(\mathbb{R}^n)]'}^{\theta'-1}
\left\|f\right\|_{X^\theta(\mathbb{R}^n)}.
\end{align*}
This finishes the proof of (ii) and hence Lemma~\ref{2005}.
\end{proof}

Now, we are ready to prove Theorem~\ref{1931}.

\begin{proof}[Proof of Theorem~\ref{1931}]
We first show the necessity.
Let $f\in\dot{W}^{1,X}(\mathbb{R}^n)$.
Then $f\in L^1_{{\mathrm{loc}}}(\mathbb{R}^n)$.
Using Lemma~\ref{2005}(i),
we conclude that, when $p\in(1,\infty)$,
$\mathcal{M}$ is bounded on $X(\mathbb{R}^n)$,
which, combined with the assumption (iv) of the present
theorem, further implies that
$\mathcal{M}$ is bounded on $X(\mathbb{R}^n)$
whenever $p\in[1,\infty)$.
From this, Lemma~\ref{1016},
and Definition~\ref{2.7}(i), we infer that
\begin{align*}
\sup_{\lambda\in(0,\infty)}\lambda
\left\|\left[\int_{\mathbb{R}^n}
\mathbf{1}_{E_{\lambda,\frac{\gamma}{p}}[f]}(\cdot,y)
\left|\cdot-y\right|^{\gamma-n}\,dy
\right]^\frac{1}{p}\right\|_{X(\mathbb{R}^n)}
\lesssim\left\|\,|\nabla f|\,\right\|_{X(\mathbb{R}^n)}<\infty.
\end{align*}
This finishes the proof of the necessity.

Next, we prove the sufficiency.
To this end, let $f\in L^1_{{\mathrm{loc}}}
(\mathbb{R}^n)$ satisfy \eqref{2127}.
By Lemma~\ref{2031}, we find that, for any $j\in\{1,\ldots,n\}$
and $m\in\mathbb{N}$,
\begin{align}\label{1028}
\left\|\partial_jf_{m}\right\|_{X(\mathbb{R}^n)}
&\leq\left\|\,\left|\nabla f_{m}\right|\,\right\|_{X(\mathbb{R}^n)}\\
&\lesssim\sup_{\lambda\in(0,\infty)}\lambda
\left\|\left[\int_{\mathbb{R}^n}
\mathbf{1}_{E_{\lambda,\frac{\gamma}{p}}[f]}(\cdot,y)
\left|\cdot-y\right|^{\gamma-n}\,dy
\right]^\frac{1}{p}\right\|_{X(\mathbb{R}^n)},\nonumber
\end{align}
where the implicit positive constant is
independent of both $f$ and $m$.
Using the assumption
(ii) of the present theorem,
both (ii) and (iii) of Remark~\ref{dual},
and Lemma~\ref{reflexive},
we conclude that
\begin{align*}
X'(\mathbb{R}^n)=X^*(\mathbb{R}^n)
\quad\text{and}\quad
X(\mathbb{R}^n)=X''(\mathbb{R}^n)
=X^{**}(\mathbb{R}^n),
\end{align*}
which, combined with both \eqref{1028} and the Alaoglu theorem
(see, for instance, \cite[Theorem 3.17]{r91}),
further implies that there exist 
$g_j\in X(\mathbb{R}^n)$ and
$\{m_k\}_{k\in\mathbb{N}}\subset\mathbb{N}$
satisfying that
$m_k\to\infty$ as $k\to\infty$
such that, for any $\phi\in X'(\mathbb{R}^n)$,
\begin{align}\label{2154}
\int_{\mathbb{R}^n}\partial_jf_{m_k}(x)\phi(x)\,dx\to
\int_{\mathbb{R}^n}g_j(x)\phi(x)\,dx
\end{align}
as $k\to\infty$.
From this and \eqref{1028},
it follows that, for any $j\in\{1,\ldots,n\}$,
\begin{align}\label{2050}
\left\|g_j\right\|_{X(\mathbb{R}^n)}
&=\sup_{\|\phi\|_{X'(\mathbb{R}^n)}=1}
\left|\int_{\mathbb{R}^n}g_j(x)\phi(x)\,dx\right|\\
&=\sup_{\|\phi\|_{X'(\mathbb{R}^n)}=1}
\lim_{k\to\infty}
\left|\int_{\mathbb{R}^n}\partial_j
f_{m_k}(x)\phi(x)\,dx\right|\nonumber\\
&\leq\sup_{\|\phi\|_{X'(\mathbb{R}^n)}=1}
\sup_{m\in\mathbb{N}}
\left|\int_{\mathbb{R}^n}\partial_j
f_{m}(x)\phi(x)\,dx\right|
=\sup_{m\in\mathbb{N}}
\left\|\partial_jf_{m}\right\|_{X(\mathbb{R}^n)}\nonumber\\
&\lesssim\sup_{\lambda\in(0,\infty)}\lambda
\left\|\left[\int_{\mathbb{R}^n}
\mathbf{1}_{E_{\lambda,\frac{\gamma}{p}}[f]}(\cdot,y)
\left|\cdot-y\right|^{\gamma-n}\,dy
\right]^\frac{1}{p}\right\|_{X(\mathbb{R}^n)}.\nonumber
\end{align}
On the other hand, by Remarks~\ref{dual}(i)
and~\ref{1052}(v), we find that
$C_{\mathrm{c}}^\infty(\mathbb{R}^n)\subset X'(\mathbb{R}^n)$.
From this,
the Lebesgue dominated convergence theorem,
$f\in L^1_{{\mathrm{loc}}}(\mathbb{R}^n)$, and \eqref{2154},
we deduce that, for any $j\in\{1,\ldots,n\}$ and
$\phi\in C_{\mathrm{c}}^\infty(\mathbb{R}^n)$,
\begin{align*}
\int_{\mathbb{R}^n}f(x)\partial_j\phi(x)\,dx
&=\lim_{k\to\infty}
\int_{\mathbb{R}^n}f_{m_k}(x)\partial_j\phi(x)\,dx\\
&=-\lim_{k\to\infty}
\int_{\mathbb{R}^n}\partial_jf_{m_k}(x)\phi(x)\,dx\\
&=-\int_{\mathbb{R}^n}g_j(x)\phi(x)\,dx,
\end{align*}
which, combined with both the definition
and the uniqueness of weak derivatives
(see, for instance, \cite[pp.\,143--144]{eg2015}),
further implies that $\partial_jf$ exists and
$\partial_jf=g_j$ almost everywhere in $\mathbb{R}^n$.
Combining this and \eqref{2050}, we conclude that
\begin{align*}
\left\|\,|\nabla f|\,\right\|_{X(\mathbb{R}^n)}
&\sim\sum_{j=1}^n\left\|g_j\right\|_{X(\mathbb{R}^n)}\\
&\lesssim\sup_{\lambda\in(0,\infty)}\lambda
\left\|\left[\int_{\mathbb{R}^n}
\mathbf{1}_{E_{\lambda,\frac{\gamma}{p}}[f]}(\cdot,y)
\left|\cdot-y\right|^{\gamma-n}\,dy
\right]^\frac{1}{p}\right\|_{X(\mathbb{R}^n)}<\infty
\end{align*}
and hence $f\in\dot{W}^{1,X}(\mathbb{R}^n)$.
This finishes the proof of the sufficiency
and hence Theorem~\ref{1931}.
\end{proof}

\begin{remark}
On the assumption (iii) of Theorem~\ref{1931},
if $p\in(1,\infty)$, then Lorist and Nieraeth \cite[Theorem~3.1]{lz2023}
gave two equivalent characterizations on the boundedness of
the Hardy--Littlewood maximal operator $\mathcal M$ on
$[X^{\frac 1p}({\mathbb R}^n)]'$, which further implies
that $\mathcal M$ is bounded on both $X(\mathbb{R}^n)$ and $X'(\mathbb{R}^n)$.
\end{remark}

Following the proof of Theorem~\ref{1931} with Lemma~\ref{1016}
replaced by \cite[Theorem~3.29]{zyy2023},
we obtain the following conclusion
which does not need the boundedness
assumption of the Hardy--Littlewood
maximal operator on $X(\mathbb{R}^n)$;
we omit the details here.

\begin{proposition}\label{3.8}
Let $X(\mathbb{R}^n)$ be a ball Banach function space.
Assume that
\begin{enumerate}
\item[\textup{(i)}]
both $X(\mathbb{R}^n)$ and $X'(\mathbb{R}^n)$
have absolutely continuous norms;
\item[\textup{(ii)}]
the Hardy--Littlewood maximal operator $\mathcal{M}$
is bounded on $X'(\mathbb{R}^n)$;
\item[\textup{(iii)}]
$\gamma\in(0,\infty)$ or both $\gamma\in(-\infty,-1)$ and $n=1$.
\end{enumerate}
Then
$f\in\dot{W}^{1,X}(\mathbb{R}^n)$
if and only if $f\in L^1_{{\mathrm{loc}}}(\mathbb{R}^n)$ and
\begin{align*}
\sup_{\lambda\in(0,\infty)}\lambda
\left\|\int_{\mathbb{R}^n}
\mathbf{1}_{E_{\lambda,\gamma}[f]}(\cdot,y)
\left|\cdot-y\right|^{\gamma-n}\,dy\right\|_{X(\mathbb{R}^n)}<\infty,
\end{align*}
where $E_{\lambda,\gamma}[f]$
for any $\lambda\in(0,\infty)$
is the same as in \eqref{Elambda}
with $\gamma/p$ replaced by $\gamma$;
moreover, if this holds true for a function $f$,
then
\begin{align*}
\sup_{\lambda\in(0,\infty)}\lambda
\left\|\int_{\mathbb{R}^n}
\mathbf{1}_{E_{\lambda,\gamma}[f]}(\cdot,y)
\left|\cdot-y\right|^{\gamma-n}\,dy\right\|_{X(\mathbb{R}^n)}
\sim\left\|\,|\nabla f|\,\right\|_{X(\mathbb{R}^n)},
\end{align*}
where the positive equivalence constants are independent of $f$.
\end{proposition}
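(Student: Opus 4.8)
The plan is to mimic the proof of Theorem~\ref{1931}, replacing the role of Lemma~\ref{1016} by the more refined statement in \cite[Theorem~3.29]{zyy2023}, which under the hypotheses $\gamma\in(0,\infty)$ or ($\gamma\in(-\infty,-1)$ and $n=1$) yields the upper bound
\begin{align*}
\sup_{\lambda\in(0,\infty)}\lambda
\left\|\int_{\mathbb{R}^n}
\mathbf{1}_{E_{\lambda,\gamma}[f]}(\cdot,y)
\left|\cdot-y\right|^{\gamma-n}\,dy\right\|_{X(\mathbb{R}^n)}
\lesssim\left\|\,|\nabla f|\,\right\|_{X(\mathbb{R}^n)}
\end{align*}
for $f\in\dot{W}^{1,X}(\mathbb{R}^n)$ \emph{without} assuming boundedness of $\mathcal{M}$ on $X(\mathbb{R}^n)$ itself---only on $X'(\mathbb{R}^n)$. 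This immediately gives the necessity: if $f\in\dot{W}^{1,X}(\mathbb{R}^n)$, then $f\in L^1_{\mathrm{loc}}(\mathbb{R}^n)$ and the displayed supremum is finite, with the $\lesssim$ half of the claimed equivalence.

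For the sufficiency, I would first establish an analogue of Lemma~\ref{2031}: for $f\in L^1_{\mathrm{loc}}(\mathbb{R}^n)$ satisfying the finiteness of the supremum and for the truncations $f_m$ as in \eqref{fnR}, one has
\begin{align*}
\left\|\,\left|\nabla f_m\right|\,\right\|_{X(\mathbb{R}^n)}
\lesssim\sup_{\lambda\in(0,\infty)}\lambda
\left\|\int_{\mathbb{R}^n}
\mathbf{1}_{E_{\lambda,\gamma}[f]}(\cdot,y)
\left|\cdot-y\right|^{\gamma-n}\,dy\right\|_{X(\mathbb{R}^n)}
\end{align*}
uniformly in $m$. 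The proof runs through the local Bourgain--Brezis--Mironescu characterization Lemma~\ref{2006} on balls $B_R$, combined with the extrapolation Lemma~\ref{4.6}, the pointwise inequality \eqref{2145} with a Hölder-type splitting of the difference quotient (taking $\theta=\frac{p(1-s)}{p+\gamma}$ when $\gamma>0$ and $\theta=\frac{1-s}{2}$ when $\gamma<0$, as in the two cases of Lemma~\ref{2031}), and the crude bound $|x-y|^{\gamma-n}$ integrated over $B_R$ via polar coordinates. The key structural point is that here the exponent $p$ in the functional equals the parameter in the characterization (i.e.\ we are in the $p=1$-type situation of the supremum over a single integral rather than over a $p$-th root), so the passage from the weighted estimate back to $X(\mathbb{R}^n)$ uses the extrapolation lemma at the level of $X(\mathbb{R}^n)$ and its associate $X'(\mathbb{R}^n)$ directly, which is exactly why assumption (ii) (boundedness of $\mathcal{M}$ on $X'(\mathbb{R}^n)$) suffices and no assumption on $\mathcal{M}$ acting on $X(\mathbb{R}^n)$ is needed. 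Having this uniform bound on $\|\,|\nabla f_m|\,\|_{X(\mathbb{R}^n)}$, I would use reflexivity of $X(\mathbb{R}^n)$ (Lemma~\ref{reflexive}, from assumption (i)) together with $X(\mathbb{R}^n)=X^{**}(\mathbb{R}^n)$ and the Alaoglu theorem to extract weak-$*$ limits $g_j$ of the sequence $\{\partial_j f_{m_k}\}_k$ along some $m_k\to\infty$; testing against $C_{\mathrm{c}}^\infty(\mathbb{R}^n)\subset X'(\mathbb{R}^n)$ and using the Lebesgue dominated convergence theorem (since $f\in L^1_{\mathrm{loc}}$ and $f_{m_k}\to f$) identifies $g_j=\partial_j f$ as the weak derivative, and lower semicontinuity of the norm under weak-$*$ limits gives $\|\,|\nabla f|\,\|_{X(\mathbb{R}^n)}\lesssim\sup_\lambda(\cdots)<\infty$, so $f\in\dot{W}^{1,X}(\mathbb{R}^n)$ and the reverse equivalence constant is obtained.

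The main obstacle I anticipate is verifying that \cite[Theorem~3.29]{zyy2023} indeed produces the upper bound under the weaker hypothesis set here---that is, that the restriction $\gamma\in(0,\infty)$ or ($\gamma<-1$, $n=1$) is precisely what allows one to drop the boundedness of $\mathcal{M}$ on $X(\mathbb{R}^n)$ from Lemma~\ref{1016}(iii), leaving only boundedness on $[X^{1/p}(\mathbb{R}^n)]'=[X^1(\mathbb{R}^n)]'=X'(\mathbb{R}^n)$. Everything else is a faithful transcription of the arguments already carried out for Lemma~\ref{2031} and Theorem~\ref{1931} with $p=1$, so the bookkeeping is routine; the delicate point is confirming the hypotheses of the cited Brezis--Seeger--Van Schaftingen--Yung estimate on $X(\mathbb{R}^n)$ match those of Proposition~\ref{3.8}, whence the paper simply says ``we omit the details.''
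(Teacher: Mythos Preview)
Your proposal is correct and follows essentially the same route as the paper: the paper's own proof is precisely ``follow the proof of Theorem~\ref{1931} with Lemma~\ref{1016} replaced by \cite[Theorem~3.29]{zyy2023},'' and you have correctly identified that this amounts to the $p=1$ case (so that $[X^{1/p}(\mathbb{R}^n)]'=X'(\mathbb{R}^n)$, making assumption (ii) here exactly assumption (iii) of Theorem~\ref{1931}), with the restriction on $\gamma$ being what allows \cite[Theorem~3.29]{zyy2023} to dispense with the extra hypothesis that $\mathcal{M}$ be bounded on $X(\mathbb{R}^n)$.
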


\subsection{Characterization of $\dot{W}^{1,X}(\Omega)$}
\label{sub3.2}

This subsection is devoted to establishing
a characterization
of $\dot{W}^{1,X}(\Omega)$,
where $\Omega\subset\mathbb{R}^n$
is an $(\varepsilon,\infty)$-domain for some $\varepsilon\in(0,1]$.
To this end, we begin with recalling the concept
of $(\varepsilon,\infty)$-domains
which was originally
introduced by Martio and Sarvas \cite{ms1979}
in connection with approximation and
injectivity properties of mappings.

\begin{definition}\label{2121}
Let $\varepsilon\in(0,1]$.
An open set $\Omega\subset\mathbb{R}^n$
is called an \emph{$(\varepsilon,\infty)$-domain}
if, for any $x,y\in\Omega$ with $x\neq y$,
there exists a rectifiable curve $\Gamma$ such that
\begin{enumerate}
\item[\textup{(i)}]
$\Gamma$ lies in $\Omega$;
\item[\textup{(ii)}]
$\Gamma$ connects $x$ and $y$;
\item[\textup{(iii)}]
$\ell(\Gamma)\leq\varepsilon^{-1}|x-y|$,
where $\ell(\Gamma)$ denotes
the Euclidean arc length of $\Gamma$;
\item[\textup{(iv)}]
for any point $z$ on $\Gamma$,
\begin{align*}
\mathrm{dist\,}(z,\partial\Omega)\ge\frac{\varepsilon|x-z||y-z|}{|x-y|}.
\end{align*}
\end{enumerate}
\end{definition}

\begin{remark}\label{1638}
\begin{enumerate}
\item[\textup{(i)}]
As was pointed out in \cite[p.\,276]{dhhr2011},
$(\varepsilon,\infty)$-domains are also known
as \emph{uniform domains} \cite{ms1979}
or \emph{Jones domains} \cite{j1981}.
We refer the reader to \cite{ge,gm1985,go1979,
hk1991,hk1992,m80,v3} for more
studies of uniform domains.
\item[\textup{(ii)}]
As was pointed out in \cite[p.\,73]{j1981},
the classical snowflake domain
(see, for instance, \cite[pp.\,104--105]{lv1973})
is an $(\varepsilon,\infty)$-domain
for some $\varepsilon\in(0,1]$.
Also, as was pointed out in \cite[p.\,276]{dhhr2011},
any bounded Lipschitz domain
and the half-space are $(\varepsilon,\infty)$-domains
for some $\varepsilon\in(0,1]$.
\end{enumerate}
\end{remark}

Now, we present the main theorem of this subsection as follows.

\begin{theorem}\label{1039}
Let $\Omega\subset\mathbb{R}^n$ be an $(\varepsilon,\infty)$-domain
with $\varepsilon\in(0,1]$,
$p\in[1,\infty)$,
and $\gamma\in\mathbb{R}\setminus\{0\}$.
Assume that
$X(\mathbb{R}^n)$ is a ball Banach function space
satisfying the same assumptions as in Theorem~\ref{1931}.
Then
$f\in\dot{W}^{1,X}(\Omega)$
if and only if $f\in L^1_{{\mathrm{loc}}}(\Omega)$ and
\begin{align}\label{1040}
\sup_{\lambda\in(0,\infty)}\lambda
\left\|\left[\int_{\Omega}
\mathbf{1}_{E_{\lambda,\frac{\gamma}{p}}[f]}(\cdot,y)
\left|\cdot-y\right|^{\gamma-n}\,dy
\right]^\frac{1}{p}\right\|_{X(\Omega)}<\infty,
\end{align}
where $E_{\lambda,\frac{\gamma}{p}}[f]$
for any $\lambda\in(0,\infty)$
is the same as in \eqref{Elambda};
moreover, if this holds true for a function $f$,
then
\begin{align*}
\sup_{\lambda\in(0,\infty)}\lambda
\left\|\left[\int_{\Omega}
\mathbf{1}_{E_{\lambda,\frac{\gamma}{p}}[f]}(\cdot,y)
\left|\cdot-y\right|^{\gamma-n}\,dy\right]^\frac{1}{p}
\right\|_{X(\Omega)}
\sim\left\|\,|\nabla f|\,\right\|_{X(\Omega)},
\end{align*}
where the positive equivalence constants are independent of $f$.
\end{theorem}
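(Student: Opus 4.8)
\emph{Overall plan.} The idea is to reduce Theorem~\ref{1039} to the $\Omega=\mathbb{R}^n$ theory already in hand: for the necessity, to use the extension theorem for $\dot{W}^{1,X}$ on $(\varepsilon,\infty)$-domains together with the Brezis--Seeger--Van Schaftingen--Yung bound on $X(\mathbb{R}^n)$ (Lemma~\ref{1016}); for the sufficiency, to transplant the proof of Theorem~\ref{1931} --- the chain consisting of the extrapolation Lemma~\ref{4.6}, the Bourgain--Brezis--Mironescu-type characterization, the approximation Lemma~\ref{2031}, and the Alaoglu theorem --- from $\mathbb{R}^n$ to $\Omega$, working on bounded $(\varepsilon,\infty)$-subdomains since the Bourgain--Brezis--Mironescu-type characterization is available only there. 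Throughout I use that $X(\Omega)$ and $X'(\Omega)$ inherit absolutely continuous norms from $X(\mathbb{R}^n)$ and $X'(\mathbb{R}^n)$ (via Remark~\ref{norm}(i), Remark~\ref{dual}(iv) and zero extension), so $X(\Omega)$ is reflexive and equals $X''(\Omega)$ by Remark~\ref{dual}(ii) and Lemma~\ref{reflexive}, and $C_{\mathrm{c}}^\infty(\Omega)\subset X'(\Omega)$ by Remarks~\ref{dual}(i) and~\ref{norm}(iii).

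\emph{Necessity.} Let $f\in\dot{W}^{1,X}(\Omega)$, so $f\in L^1_{\mathrm{loc}}(\Omega)$. The extension theorem on $\dot{W}^{1,X}(\Omega)$ gives $Ef\in\dot{W}^{1,X}(\mathbb{R}^n)$ with $(Ef)|_\Omega=f$ and $\|\,|\nabla(Ef)|\,\|_{X(\mathbb{R}^n)}\lesssim\|\,|\nabla f|\,\|_{X(\Omega)}$. As $Ef=f$ on $\Omega$, for every $\lambda\in(0,\infty)$ one has $E_{\lambda,\frac{\gamma}{p}}[f]=E_{\lambda,\frac{\gamma}{p}}[Ef]\cap(\Omega\times\Omega)$, hence for a.e.\ $x\in\Omega$, $\int_{\Omega}\mathbf{1}_{E_{\lambda,\frac{\gamma}{p}}[f]}(x,y)|x-y|^{\gamma-n}\,dy\le\int_{\mathbb{R}^n}\mathbf{1}_{E_{\lambda,\frac{\gamma}{p}}[Ef]}(x,y)|x-y|^{\gamma-n}\,dy$; taking $p$-th roots and then the $X(\Omega)$-norm, and using Remark~\ref{norm}(i) with Definition~\ref{1659}(ii) for the zero extension, the left-hand side of \eqref{1040} is $\lesssim$ the corresponding functional for $Ef$ on $\mathbb{R}^n$. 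Since $\mathcal{M}$ is bounded on $X(\mathbb{R}^n)$ --- by Lemma~\ref{2005}(i) when $p\in(1,\infty)$ and by assumption (iv) of Theorem~\ref{1931} when $p=1$ --- Lemma~\ref{1016} applied to $Ef$ bounds that functional by $C\|\,|\nabla(Ef)|\,\|_{X(\mathbb{R}^n)}\lesssim\|\,|\nabla f|\,\|_{X(\Omega)}<\infty$. This gives \eqref{1040} and the ``$\lesssim$'' half of the equivalence.

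\emph{Sufficiency.} Let $f\in L^1_{\mathrm{loc}}(\Omega)$ make the supremum in \eqref{1040}, denoted $\mathsf{A}$, finite, and let $\{f_m\}_{m\in\mathbb{N}}$ be the truncations \eqref{fnR} of $f$ on $\Omega$. First suppose $\Omega$ is bounded. Then $\mathbf{1}_\Omega\in X(\Omega)$ and each $f_m\in X(\Omega)$; the polar-coordinate estimates on $\int_\Omega|x-y|^{\gamma-n}\,dy$ used in the proof of Lemma~\ref{2031} in the cases $\gamma>0$ and $\gamma<0$ carry over verbatim with $2R$ replaced by $\mathrm{diam}\,\Omega$; and running the argument of that proof with $B_R$ replaced by $\Omega$ --- using the Bourgain--Brezis--Mironescu-type characterization on the bounded $(\varepsilon,\infty)$-domain $\Omega$ in place of Lemma~\ref{2006}, and the extrapolation Lemma~\ref{4.6} --- yields $|\nabla f_m|\in X(\Omega)$ with $\|\,|\nabla f_m|\,\|_{X(\Omega)}\le C\mathsf{A}$, where $C$ is independent of $m$ because $(\mathrm{diam}\,\Omega)^{\gamma}$, $\|\mathbf{1}_\Omega\|_X$ and the truncation level enter only through powers of $\theta=\theta(s)\to0$ as $s\to1^-$. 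If $\Omega$ is unbounded, I would exhaust it by an increasing sequence $\Omega_k\uparrow\Omega$ of bounded $(\varepsilon,\infty)$-subdomains, run the same argument on each $\Omega_k$ to get $\|\,|\nabla(f_m|_{\Omega_k})|\,\|_{X(\Omega_k)}\le C\mathsf{A}$ with $C$ independent of $m$ and $k$, patch the resulting weak gradients (the weak derivative being a local object), and then use the Fatou property Definition~\ref{1659}(iii): since $|\nabla f_m|\mathbf{1}_{\Omega_k}\uparrow|\nabla f_m|$ a.e.\ on $\Omega$ and $\|\,|\nabla f_m|\mathbf{1}_{\Omega_k}\|_{X(\Omega)}=\|\,|\nabla(f_m|_{\Omega_k})|\,\|_{X(\Omega_k)}\le C\mathsf{A}$, we obtain $\|\,|\nabla f_m|\,\|_{X(\Omega)}\le C\mathsf{A}$ uniformly in $m$. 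The endgame is then identical to the sufficiency part of the proof of Theorem~\ref{1931}: since $X(\Omega)$ is reflexive, the Alaoglu theorem produces $g_j\in X(\Omega)$ ($j\in\{1,\dots,n\}$) and a subsequence $m_k\to\infty$ with $\int_\Omega\partial_jf_{m_k}(x)\phi(x)\,dx\to\int_\Omega g_j(x)\phi(x)\,dx$ for every $\phi\in X'(\Omega)$; testing against $\phi\in C_{\mathrm{c}}^\infty(\Omega)\subset X'(\Omega)$, together with the Lebesgue dominated convergence theorem and $f_{m_k}\to f$ in $L^1_{\mathrm{loc}}(\Omega)$, identifies $g_j=\partial_jf$, and $\|g_j\|_{X(\Omega)}\le\sup_m\|\partial_jf_m\|_{X(\Omega)}\lesssim\mathsf{A}$; hence $|\nabla f|\in X(\Omega)$ with $\|\,|\nabla f|\,\|_{X(\Omega)}\lesssim\mathsf{A}$, i.e.\ $f\in\dot{W}^{1,X}(\Omega)$, and the ``$\gtrsim$'' half of the equivalence holds.

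\emph{The main obstacle.} The delicate point is the \emph{uniformity in $k$} of the constant $C$ in the local estimate $\|\,|\nabla(f_m|_{\Omega_k})|\,\|_{X(\Omega_k)}\le C\mathsf{A}$ over the whole exhaustion: one has to check that every quantity depending on $\mathrm{diam}\,\Omega_k$, on $\|\mathbf{1}_{\Omega_k}\|_X$, on $m$, or on the $(\varepsilon,\infty)$-parameter of $\Omega_k$ occurring in the proof of Lemma~\ref{2031} appears only to a power proportional to $\theta(s)$ and so disappears as $s\to1^-$, and that the constant in the Bourgain--Brezis--Mironescu-type identity on $\Omega_k$ is the dimensional constant $[2\pi^{(n-1)/2}\Gamma(\tfrac{p+1}{2})/(p\Gamma(\tfrac{p+n}{2}))]^{1/p}$, independent of $k$ --- which forces one to arrange the exhausting subdomains $\Omega_k$ with a uniformly controlled $(\varepsilon,\infty)$-parameter, the genuine technical task for unbounded $\Omega$. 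I note also that the more naive route, decomposing $\Omega$ into Whitney cubes, estimating on dilated cubes and summing, does \emph{not} work, because the supremum over $\lambda$ intrinsic to the real-interpolation inequality \eqref{2145} ends up outside the cube sum and the inequality $\sum_i\sup_\lambda(\cdot)\le C\sup_\lambda\sum_i(\cdot)$ fails in general; this is exactly why the argument must be organized around an exhaustion by bounded $(\varepsilon,\infty)$-domains rather than by balls or cubes.
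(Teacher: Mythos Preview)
Your necessity argument and your overall sufficiency scheme --- truncate, apply the Bourgain--Brezis--Mironescu characterization on bounded pieces to bound $\sup_m\|\,|\nabla f_m|\,\|_{X(\Omega)}$ by the functional $\mathsf{A}$, then use reflexivity of $X(\Omega)$ and the Alaoglu theorem to pass to the limit --- are exactly the paper's; when $\Omega$ is bounded your argument coincides with it.

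For unbounded $\Omega$, however, the paper does not attempt to exhaust by bounded $(\varepsilon,\infty)$-subdomains. It simply reruns the proof of Lemma~\ref{2031} with $\mathbb{R}^n$ replaced by $\Omega$, $B_R$ replaced by $B_R\cap\Omega$, and Lemma~\ref{2006} replaced by \cite[Corollary~5.9]{zyy2023bbm} on the set $B_R\cap\Omega$. Since $B_R\cap\Omega\uparrow\Omega$ as $R\to\infty$ and, as you correctly note, every domain-dependent quantity in the Lemma~\ref{2031} calculation (the diameter, $\|\mathbf{1}_{B_R\cap\Omega}\|_X$, the truncation level $m$) enters only raised to a power of $\theta\to0$ while the BBM limit constant is purely dimensional, uniformity in $R$ is automatic and the patching-plus-Fatou step goes through verbatim. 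The obstacle you single out --- constructing $\Omega_k$ with a uniformly controlled uniform-domain parameter --- is therefore self-imposed: the paper's exhaustion is the trivial one by ball intersections. The only point to watch is that $B_R\cap\Omega$ is in general \emph{not} itself an $(\varepsilon',\infty)$-domain (two internally tangent balls intersect in a cusped lens), so the paper's shortcut rests on \cite[Corollary~5.9]{zyy2023bbm} being applicable to such intersections; this is imported from the companion paper rather than argued here.
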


To show Theorem~\ref{1039},
we need the following extension lemma
on $\dot{W}^{1,X}(\Omega)$,
which can be found in \cite[Theorem~3.13]{zyy2023bbm}.

\begin{lemma}\label{extension2}
Let $\Omega\subset\mathbb{R}^n$ be an $(\varepsilon,\infty)$-domain
with $\varepsilon\in(0,1]$
and $X(\mathbb{R}^n)$ a ball Banach function space
having an absolutely continuous norm.
Assume that
there exists $p\in[1,\infty)$ such that
$X^\frac{1}{p}(\mathbb{R}^n)$
is a ball Banach function space
and that the Hardy--Littlewood maximal operator
is bounded on $[X^\frac{1}{p}(\mathbb{R}^n)]'$.
Then there exists a linear extension operator
$$
\Lambda:\ \dot{W}^{1,X}(\Omega)\to\dot{W}^{1,X}(\mathbb{R}^n)
$$
such that,
for any $f\in\dot{W}^{1,X}(\Omega)$,
\begin{enumerate}
\item[\textup{(i)}]
$\Lambda(f)=f$ almost everywhere in $\Omega$;
\item[\textup{(ii)}]
$\|\Lambda(f)\|_{\dot{W}^{1,X}(\mathbb{R}^n)}
\leq C\|f\|_{\dot{W}^{1,X}(\Omega)}$,
where the positive constant $C$ is independent of $f$.
\end{enumerate}
\end{lemma}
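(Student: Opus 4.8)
The plan is to establish the two implications separately: the necessity by reducing to the Euclidean case (Theorem~\ref{1931}) through the extension operator (Lemma~\ref{extension2}), and the sufficiency by transplanting the internal machinery of the proof of Theorem~\ref{1931} from $\mathbb{R}^n$ to bounded pieces of $\Omega$ and then globalizing.

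\emph{Necessity.} Let $f\in\dot{W}^{1,X}(\Omega)$. By Lemma~\ref{extension2} there is $\Lambda(f)\in\dot{W}^{1,X}(\mathbb{R}^n)$ with $\Lambda(f)=f$ almost everywhere in $\Omega$ and $\|\,|\nabla\Lambda(f)|\,\|_{X(\mathbb{R}^n)}\le C\|\,|\nabla f|\,\|_{X(\Omega)}$. Applying the necessity part of Theorem~\ref{1931} to $\Lambda(f)$ bounds the $X(\mathbb{R}^n)$-functional in \eqref{2127} for $\Lambda(f)$ by $\|\,|\nabla\Lambda(f)|\,\|_{X(\mathbb{R}^n)}\lesssim\|\,|\nabla f|\,\|_{X(\Omega)}$. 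Since $\Lambda(f)=f$ almost everywhere in $\Omega$, one has $E_{\lambda,\frac{\gamma}{p}}[f]=E_{\lambda,\frac{\gamma}{p}}[\Lambda(f)]\cap(\Omega\times\Omega)$ for each $\lambda\in(0,\infty)$, so $\int_{\Omega}\mathbf{1}_{E_{\lambda,\frac{\gamma}{p}}[f]}(x,y)|x-y|^{\gamma-n}\,dy\le\int_{\mathbb{R}^n}\mathbf{1}_{E_{\lambda,\frac{\gamma}{p}}[\Lambda(f)]}(x,y)|x-y|^{\gamma-n}\,dy$ for every $x\in\Omega$. Combining this with the zero-extension identity of Remark~\ref{norm}(i) and the lattice property of Definition~\ref{1659}(ii), the $X(\Omega)$-functional in \eqref{1040} is dominated by its $X(\mathbb{R}^n)$-counterpart, hence $\lesssim\|\,|\nabla f|\,\|_{X(\Omega)}$. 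This yields \eqref{1040} and the ``$\lesssim$'' half of the equivalence.

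\emph{Sufficiency.} Suppose $f\in L^1_{\mathrm{loc}}(\Omega)$ satisfies \eqref{1040}, and let $\{f_m\}_{m\in\mathbb{N}}$ be its truncations from \eqref{fnR}. The crux is the $\Omega$-analogue of Lemma~\ref{2031}:
\[
\|\,|\nabla f_m|\,\|_{X(\Omega)}\le C\sup_{\lambda\in(0,\infty)}\lambda\left\|\left[\int_{\Omega}\mathbf{1}_{E_{\lambda,\frac{\gamma}{p}}[f]}(\cdot,y)|\cdot-y|^{\gamma-n}\,dy\right]^{\frac{1}{p}}\right\|_{X(\Omega)}
\]
with $C$ independent of $f$ and $m$. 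To prove it, fix a bounded $(\varepsilon,\infty)$-subdomain $D$ of $\Omega$ (for instance, any ball with closure in $\Omega$); restricting \eqref{1040} to $D$ via Definition~\ref{1659}(ii) and then following the proof of Lemma~\ref{2031} on $D$---whose ingredients are the extrapolation Lemma~\ref{4.6} (already stated for an arbitrary open set), the H\"older-type splitting \eqref{2145}, the polar-coordinate bookkeeping in the two cases $\gamma>0$ and $\gamma<0$, and the Bourgain--Brezis--Mironescu characterization of $W^{1,X}$ on the bounded domain $D$ from \cite[Corollary~5.9]{zyy2023bbm} (Lemma~\ref{2006} being its case $D=B_R$), which simultaneously produces the weak gradient of $f_m|_D$---gives $\|\,|\nabla f_m|\,\|_{X(D)}\le C\cdot(\text{right-hand side above})$ with $C$ independent of $D$ and $m$. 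Exhausting $\Omega$ by such $D$ and invoking Definition~\ref{1659}(iii) (through the zero-extension identity of Remark~\ref{norm}(i)) globalizes the bound to $\Omega$ and identifies $\nabla f$, hence $\nabla f_m$, as genuine measurable functions on $\Omega$. The proof then closes exactly as in Theorem~\ref{1931}: by assumption (ii) and Lemma~\ref{reflexive} applied to $\Omega$, $X(\Omega)$ is reflexive and $X(\Omega)=X^{**}(\Omega)$, so the displayed estimate and the Alaoglu theorem yield $g_j\in X(\Omega)$ and $m_k\to\infty$ with $\int_{\Omega}\partial_jf_{m_k}\phi\to\int_{\Omega}g_j\phi$ for all $\phi\in X'(\Omega)$ and $\|g_j\|_{X(\Omega)}\lesssim(\text{right-hand side above})$; since $C_{\mathrm{c}}^\infty(\Omega)\subset X'(\Omega)$ by Remark~\ref{norm}(iii), testing against $\phi\in C_{\mathrm{c}}^\infty(\Omega)$ and using $f\in L^1_{\mathrm{loc}}(\Omega)$ together with the Lebesgue dominated convergence theorem identifies $g_j=\partial_jf$ almost everywhere in $\Omega$. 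Hence $f\in\dot{W}^{1,X}(\Omega)$ and $\|\,|\nabla f|\,\|_{X(\Omega)}\lesssim(\text{right-hand side above})$; combined with the necessity estimate, this gives the claimed equivalence with constants independent of $f$.

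\emph{Main obstacle.} I expect the delicate point to be the globalization step inside the displayed estimate: on a general, possibly unbounded, $(\varepsilon,\infty)$-domain $\Omega$ there is no rotation or translation invariance of $X$ to lean on, and $\Omega$ need not be exhausted by balls, so one must either possess a Bourgain--Brezis--Mironescu characterization valid directly on $\Omega$ or exhaust $\Omega$ by bounded $(\varepsilon,\infty)$-subdomains with comparable parameters and control the $X(\Omega)$-norm along the way via Definition~\ref{1659}(iii); a secondary technicality is the case $p=1$, which forces one to invoke assumption (iv) of Theorem~\ref{1931} exactly where that theorem does.
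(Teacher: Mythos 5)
Your proposal does not address the statement it was supposed to prove. The statement here is Lemma~\ref{extension2} itself: the existence of a bounded linear extension operator $\Lambda:\dot{W}^{1,X}(\Omega)\to\dot{W}^{1,X}(\mathbb{R}^n)$ on an $(\varepsilon,\infty)$-domain. What you wrote is instead a proof of Theorem~\ref{1039} (the BSVY-type characterization of $\dot{W}^{1,X}(\Omega)$), and in your ``necessity'' step you explicitly invoke Lemma~\ref{extension2} to produce $\Lambda(f)$. As an argument for the lemma this is circular; as written, no part of the proposal constructs an extension operator, verifies its linearity, or establishes the norm bound $\|\Lambda(f)\|_{\dot{W}^{1,X}(\mathbb{R}^n)}\le C\|f\|_{\dot{W}^{1,X}(\Omega)}$, so the entire content of the statement is missing. (In the paper this lemma is not reproved either: it is quoted from \cite[Theorem~3.13]{zyy2023bbm}, whereas the argument you gave essentially reproduces the paper's proof of Theorem~\ref{1039}, which is a different result.)

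To actually prove the lemma one needs a Jones-type construction: take Whitney decompositions of $\Omega$ and of $(\overline{\Omega})^\complement$, use the $(\varepsilon,\infty)$-condition to associate to each small exterior Whitney cube a reflected interior cube of comparable size and bounded distance, define $\Lambda(f)$ on the exterior by gluing, via a smooth partition of unity subordinate to the exterior cubes, the averages (degree-zero polynomial fittings) of $f$ on the reflected cubes, and then estimate $|\nabla\Lambda(f)|$ pointwise by maximal-type averages of $|\nabla f|$ over chains of cubes. The passage from the resulting weighted $L^p_\omega$ (or pointwise/maximal) estimates to the $X(\mathbb{R}^n)$-norm is where the hypotheses of the lemma enter: the extrapolation tool of Lemma~\ref{4.6} together with the boundedness of $\mathcal{M}$ on $[X^{\frac1p}(\mathbb{R}^n)]'$, and the absolute continuity of the norm for the requisite limiting arguments. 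None of these steps appears in your proposal, so the gap is not a technical detail but the whole construction.
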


\begin{proof}[Proof of Theorem~\ref{1039}]
We first prove the necessity. Let $f\in\dot{W}^{1,X}(\Omega)$.
Then, by Lemma~\ref{extension2}, we find that
there exists $g\in\dot{W}^{1,X}(\mathbb{R}^n)$
such that $g=f$ almost everywhere in $\Omega$ and
$$
\left\|\,|\nabla g|\,\right\|_{X(\mathbb{R}^n)}
\lesssim\left\|\,|\nabla f|\,\right\|_{X(\Omega)}.
$$
From this and Theorem~\ref{1931} with $f:=g$,
we infer that
\begin{align*}
&\sup_{\lambda\in(0,\infty)}\lambda
\left\|\left[\int_{\Omega}
\mathbf{1}_{E_{\lambda,\frac{\gamma}{p}}[f]}(\cdot,y)
\left|\cdot-y\right|^{\gamma-n}\,dy\right]^\frac{1}{p}
\right\|_{X(\Omega)}\\
&\quad=\sup_{\lambda\in(0,\infty)}\lambda
\left\|\left[\int_{\Omega}
\mathbf{1}_{E_{\lambda,\frac{\gamma}{p}}[g]}(\cdot,y)
\left|\cdot-y\right|^{\gamma-n}\,dy\right]^\frac{1}{p}
\right\|_{X(\Omega)}\\
&\quad\leq\sup_{\lambda\in(0,\infty)}\lambda
\left\|\left[\int_{\mathbb{R}^n}
\mathbf{1}_{E_{\lambda,\frac{\gamma}{p}}[g]}(\cdot,y)
\left|\cdot-y\right|^{\gamma-n}\,dy\right]^\frac{1}{p}
\right\|_{X(\mathbb{R}^n)}\\
&\quad\sim\left\|\,|\nabla g|\,\right\|_{X(\mathbb{R}^n)}
\lesssim\left\|\,|\nabla f|\,\right\|_{X(\Omega)},
\end{align*}
which completes the proof of the necessity.

Next, we show the sufficiency.
Let $f\in L_{\mathrm{loc}}^1(\Omega)$ satisfy \eqref{1040}.
For any $m\in\mathbb{N}$, let
$f_m$ be defined the same as in \eqref{fnR}.
Following the proof of Lemma~\ref{2031}
with $\mathbb{R}^n$, $B_R$, and Lemma~\ref{2006}
replaced, respectively, by $\Omega$, $B_R\cap\Omega$, and
\cite[Corollary~5.9]{zyy2023bbm} with $\Omega:=B_R\cap\Omega$,
we conclude that, for any $m\in\mathbb{N}$,
\begin{align*}
\left\|\,\left|\nabla f_{m}\right|\,\right\|_{X(\Omega)}
\lesssim\sup_{\lambda\in(0,\infty)}\lambda
\left\|\left[\int_{\Omega}
\mathbf{1}_{E_{\lambda,\frac{\gamma}{p}}[f]}(\cdot,y)
\left|\cdot-y\right|^{\gamma-n}\,dy
\right]^\frac{1}{p}\right\|_{X(\Omega)},
\end{align*}
where the implicit positive constant is independent of $m$.
From this and an argument similar to that used in the proof of
the sufficiency of Theorem~\ref{1931}
with Remarks~\ref{dual}(i)
and~\ref{1052}(v) replaced by both
(i) and (iv) of Remark~\ref{dual} and Remark~\ref{norm}(iii),
we deduce that, for any $j\in\{1,\ldots,n\}$,
there exists $g_j\in X(\Omega)$ such that, for any
$\phi\in C_{\mathrm{c}}^\infty(\Omega)$,
\begin{align*}
\int_{\Omega}f(x)\partial_j\phi(x)\,dx
=-\int_{\Omega}g_j(x)\phi(x)\,dx
\end{align*}
and
\begin{align}\label{1713}
\left\|g_j\right\|_{X(\Omega)}
\lesssim\sup_{\lambda\in(0,\infty)}\lambda
\left\|\left[\int_{\Omega}
\mathbf{1}_{E_{\lambda,\frac{\gamma}{p}}[f]}(\cdot,y)
\left|\cdot-y\right|^{\gamma-n}\,dy
\right]^\frac{1}{p}\right\|_{X(\Omega)}.
\end{align}
By this, we conclude that, for any $j\in\{1,\ldots,n\}$,
$\partial_jf$ exists and
$\partial_jf=g_j$ almost everywhere in $\Omega$,
which, combined with \eqref{1713}, further implies that
\begin{align*}
\left\|\,|\nabla f|\,\right\|_{X(\Omega)}
&\sim\sum_{j=1}^n\left\|g_j\right\|_{X(\Omega)}\\
&\lesssim\sup_{\lambda\in(0,\infty)}\lambda
\left\|\left[\int_{\Omega}
\mathbf{1}_{E_{\lambda,\frac{\gamma}{p}}[f]}(\cdot,y)
\left|\cdot-y\right|^{\gamma-n}\,dy
\right]^\frac{1}{p}\right\|_{X(\Omega)}<\infty
\end{align*}
and hence $f\in\dot{W}^{1,X}(\Omega)$.
This finishes the proof of the sufficiency
and hence Theorem~\ref{1039}.
\end{proof}

Following the proof of Theorem~\ref{1039} with Theorem~\ref{1931}
replaced by Proposition~\ref{3.8},
we obtain the following conclusion
which does not need the boundedness
assumption of the Hardy--Littlewood
maximal operator on $X(\mathbb{R}^n)$;
we omit the details here.

\begin{proposition}
Let $\Omega\subset\mathbb{R}^n$ be
an $(\varepsilon,\infty)$-domain
with $\varepsilon\in(0,1]$.
Assume that
$X(\mathbb{R}^n)$ is a ball Banach function space
satisfying the same assumptions as in
Proposition~\ref{3.8}.
Then
$f\in\dot{W}^{1,X}(\Omega)$
if and only if $f\in L^1_{{\mathrm{loc}}}(\Omega)$ and
\begin{align*}
\sup_{\lambda\in(0,\infty)}\lambda
\left\|\int_{\Omega}
\mathbf{1}_{E_{\lambda,\gamma}[f]}(\cdot,y)
\left|\cdot-y\right|^{\gamma-n}\,dy\right\|_{X(\Omega)}<\infty,
\end{align*}
where $E_{\lambda,\gamma}[f]$
for any $\lambda\in(0,\infty)$
is the same as in \eqref{Elambda}
with $\gamma/p$ replaced by $\gamma$;
moreover, if this holds true for a function $f$,
then
\begin{align*}
\sup_{\lambda\in(0,\infty)}\lambda
\left\|\int_{\Omega}
\mathbf{1}_{E_{\lambda,\gamma}[f]}(\cdot,y)
\left|\cdot-y\right|^{\gamma-n}\,dy\right\|_{X(\Omega)}
\sim\left\|\,|\nabla f|\,\right\|_{X(\Omega)},
\end{align*}
where the positive equivalence constants are independent of $f$.
\end{proposition}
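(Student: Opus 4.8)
The plan is to follow the proof of Theorem~\ref{1039} almost verbatim, taking $p=1$ and replacing every use of Theorem~\ref{1931} by Proposition~\ref{3.8}; the only thing that really needs checking is that the remaining ingredients are still available under the weaker hypotheses (i)--(iii) of Proposition~\ref{3.8}. In particular, the extension Lemma~\ref{extension2} requires only that $X(\mathbb{R}^n)$ have an absolutely continuous norm and that $\mathcal{M}$ be bounded on $[X^{1/p}(\mathbb{R}^n)]'$ for some $p\in[1,\infty)$; choosing $p=1$, so that $X^{1/p}(\mathbb{R}^n)=X(\mathbb{R}^n)$, these are exactly assumptions (i) and (ii) of Proposition~\ref{3.8}. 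Likewise Lemma~\ref{2031} with $p=1$, Lemma~\ref{reflexive}, the extrapolation Lemma~\ref{4.6} with $p=1$, Remarks~\ref{dual}(i),(iv) and~\ref{norm}(iii), and the localized Bourgain--Brezis--Mironescu formula \cite[Corollary~5.9]{zyy2023bbm} applied on $B_R\cap\Omega$ all hold under these hypotheses, while the dependence of Theorem~\ref{1931} on $\mathcal{M}$ being bounded on $X(\mathbb{R}^n)$ has already been removed inside Proposition~\ref{3.8} (where Lemma~\ref{1016} is replaced by \cite[Theorem~3.29]{zyy2023}), so it does not reappear here.

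\emph{Necessity.} Given $f\in\dot{W}^{1,X}(\Omega)$, I would apply Lemma~\ref{extension2} with $p=1$ to obtain $g\in\dot{W}^{1,X}(\mathbb{R}^n)$ with $g=f$ almost everywhere in $\Omega$ and $\|\,|\nabla g|\,\|_{X(\mathbb{R}^n)}\lesssim\|\,|\nabla f|\,\|_{X(\Omega)}$. Applying Proposition~\ref{3.8} to $g$ and noting that, since $g=f$ almost everywhere in $\Omega$, the sets $E_{\lambda,\gamma}[f]$ and $E_{\lambda,\gamma}[g]\cap(\Omega\times\Omega)$ coincide up to a null set, one shrinks the inner integral from $\mathbb{R}^n$ to $\Omega$ and uses Remark~\ref{norm}(i) together with the monotonicity in Remark~\ref{norm}(ii) to conclude that the left-hand side of the finiteness condition is $\lesssim\|\,|\nabla g|\,\|_{X(\mathbb{R}^n)}\lesssim\|\,|\nabla f|\,\|_{X(\Omega)}<\infty$; this also yields one half of the asserted equivalence.

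\emph{Sufficiency.} Assume $f\in L^1_{\mathrm{loc}}(\Omega)$ satisfies the finiteness condition and let $f_m$ be its truncation \eqref{fnR}. The first and key step is the $\Omega$-analogue of Lemma~\ref{2031}: for every $m\in\mathbb{N}$,
\[
\|\,|\nabla f_m|\,\|_{X(\Omega)}\lesssim\sup_{\lambda\in(0,\infty)}\lambda\left\|\int_{\Omega}\mathbf{1}_{E_{\lambda,\gamma}[f]}(\cdot,y)\,|\cdot-y|^{\gamma-n}\,dy\right\|_{X(\Omega)}
\]
with the implicit constant independent of $m$. I would prove this exactly as Lemma~\ref{2031} (with $p=1$, so that the outer power $[\,\cdot\,]^{1/p}$ collapses, which is why the functional here carries no such power), running the two cases $\gamma\in(0,\infty)$ and $\gamma\in(-\infty,0)$, using \eqref{2145} with $p=\frac{1}{1-\theta}$ and the extrapolation Lemma~\ref{4.6}, and replacing Lemma~\ref{2006} by \cite[Corollary~5.9]{zyy2023bbm} applied on $B_R\cap\Omega$ in place of $B_R$; then one glues the weak gradients $\nabla f_{m,R}$ over $R\in\mathbb{N}$ as in \eqref{2242}--\eqref{1036}. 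Given this uniform bound, reflexivity of $X(\Omega)$ (Lemma~\ref{reflexive}, from assumption (i)) and the Alaoglu theorem \cite[Theorem~3.17]{r91} produce, for each $j\in\{1,\dots,n\}$, a function $g_j\in X(\Omega)$ and a subsequence $\{m_k\}_k$ with $\int_\Omega\partial_jf_{m_k}\phi\to\int_\Omega g_j\phi$ for every $\phi\in X'(\Omega)=X^*(\Omega)$; passing to the limit in $\int_\Omega f_{m_k}\,\partial_j\phi=-\int_\Omega\partial_jf_{m_k}\,\phi$ against $\phi\in C_{\mathrm{c}}^\infty(\Omega)\subset X'(\Omega)$ (Remark~\ref{dual}(i),(iv) and Remark~\ref{norm}(iii), the dominated convergence theorem, and $f\in L^1_{\mathrm{loc}}$) and invoking uniqueness of weak derivatives \cite[pp.\,143--144]{eg2015} gives $\partial_jf=g_j$ almost everywhere in $\Omega$; hence $\|\,|\nabla f|\,\|_{X(\Omega)}\sim\sum_{j=1}^n\|g_j\|_{X(\Omega)}$ is dominated by the functional, so $f\in\dot{W}^{1,X}(\Omega)$, and combining with the necessity gives the equivalence.

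\emph{Main obstacle.} The only genuinely non-routine point is the uniform-in-$m$ bound just described, i.e.\ transplanting Lemma~\ref{2031} to $\Omega$: one must verify that the H\"older splitting \eqref{2145} followed by extrapolation still produces constants independent of $m$ and of $R$ when carried out on $B_R\cap\Omega$ rather than on $B_R$ (here one uses $\mathbf{1}_{B_R\cap\Omega}\in X(\mathbb{R}^n)$, which holds since $B_R\cap\Omega$ is bounded, and the fact that restricting the region of integration only improves the polar-coordinate estimates), and that \cite[Corollary~5.9]{zyy2023bbm} indeed applies with $\Omega$ there replaced by $B_R\cap\Omega$. I should also note that the restriction on $\gamma$ in assumption (iii) is inherited solely from Proposition~\ref{3.8} (equivalently, from \cite[Theorem~3.29]{zyy2023}) via the necessity direction; the sufficiency argument above is insensitive to it. Everything else — reflexivity, the weak extraction, and the limit passage — is soft and parallels the proofs of Theorems~\ref{1931} and~\ref{1039} line by line.
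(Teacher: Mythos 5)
Your proposal is correct and fills in exactly the argument the paper leaves implicit: the authors state only that one should repeat the proof of Theorem~\ref{1039} with Theorem~\ref{1931} replaced by Proposition~\ref{3.8} (so that the boundedness of $\mathcal{M}$ on $X(\mathbb{R}^n)$ is no longer needed), and you carry that out carefully, verifying that Lemma~\ref{extension2} and Lemma~\ref{2031} (with $p=1$) remain applicable under assumptions (i)–(ii) of Proposition~\ref{3.8}, and then running the necessity via extension plus Proposition~\ref{3.8} and the sufficiency via the $B_R\cap\Omega$-localized version of Lemma~\ref{2031} together with reflexivity and the Alaoglu theorem. Your added observation that the restriction on $\gamma$ enters only through the necessity direction (i.e.\ through Proposition~\ref{3.8}, equivalently \cite[Theorem~3.29]{zyy2023}) is a correct and useful clarification that the paper does not make explicit.
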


\section{Applications to Specific Function Spaces}
\label{S5}

In this section, we apply Theorems~\ref{1931} and~\ref{1039}
to ten examples of ball
Banach function spaces, namely
weighted Lebesgue spaces and
Morrey spaces (see Subsection~\ref{5.4} below),
Besov--Bourgain--Morrey spaces
(see Subsection~\ref{BBMspace} below),
local generalized Herz spaces
and global generalized Herz spaces (see Subsection~\ref{Herz} below),
mixed-norm Lebesgue spaces (see Subsection~\ref{5.2} below),
variable Lebesgue spaces (see Subsection~\ref{5.3} below),
Lorentz spaces (see Subsection~\ref{5.7} below),
Orlicz spaces (see Subsection~\ref{5.5} below),
and Orlicz-slice spaces (see Subsection~\ref{5.6} below).
Throughout this section,
we always assume that $\Omega=\mathbb{R}^n$
or $\Omega\subset\mathbb{R}^n$ is an $(\varepsilon,\infty)$-domain
for some $\varepsilon\in(0,1]$.

\subsection{Weighted Lebesgue Spaces
and Morrey Spaces}\label{5.4}

We first consider the case of weighted Lebesgue spaces;
see \eqref{2040} for their definitions.
As was pointed out in \cite[p.\,86]{shyy2017},
the weighted Lebesgue space $L^r_\omega(\mathbb{R}^n)$
is a ball quasi-Banach function space,
but it may not be a Banach function space
in the sense of Bennett and Sharpley \cite{bs1988}.
When $X:=L^r_{\omega}$,
we simply write $\dot{W}^{1,r}_{\omega}
(\Omega):=\dot{W}^{1,X}(\Omega)$.

Using Theorems~\ref{1931} and~\ref{1039},
we obtain the following conclusion.

\begin{theorem}\label{2044}
Let $\gamma\in\mathbb{R}\setminus\{0\}$,
$r\in(1,\infty)$, $p\in[1,r]$, and
$\omega\in A_{r/p}(\mathbb{R}^n)$.
Then $f\in\dot{W}^{1,r}_\omega(\Omega)$ if and only if
$f\in L_{{\mathrm{loc}}}^1(\Omega)$ and
$$
\sup_{\lambda\in(0,\infty)}\lambda
\left\|\left[\int_{\Omega}
\mathbf{1}_{E_{\lambda,\frac{\gamma}{p}}[f]}(\cdot,y)
\left|\cdot-y\right|^{\gamma-n}\,dy\right]^\frac{1}{p}
\right\|_{L^{r}_\omega(\Omega)}<\infty,
$$
where $E_{\lambda,\frac{\gamma}{p}}[f]$
for any $\lambda\in(0,\infty)$
is the same as in \eqref{Elambda};
moreover, for such $f$,
$$
\sup_{\lambda\in(0,\infty)}\lambda
\left\|\left[\int_{\Omega}
\mathbf{1}_{E_{\lambda,\frac{\gamma}{p}}[f]}(\cdot,y)
\left|\cdot-y\right|^{\gamma-n}\,dy\right]^\frac{1}{p}
\right\|_{L^{r}_\omega(\Omega)}
\sim\left\|\,|\nabla f|\,\right\|_{L^{r}_\omega(\Omega)},
$$
where the positive equivalence constants are independent of $f$.
\end{theorem}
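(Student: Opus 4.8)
The plan is to obtain Theorem~\ref{2044} as a direct consequence of Theorems~\ref{1931} and~\ref{1039} applied with $X(\mathbb{R}^n):=L^r_\omega(\mathbb{R}^n)$; thus the whole argument reduces to checking that, under the assumptions $r\in(1,\infty)$, $p\in[1,r]$, and $\omega\in A_{r/p}(\mathbb{R}^n)$, this $X(\mathbb{R}^n)$ satisfies the hypotheses (i) through (iv) of Theorem~\ref{1931}. Since $L^r_\omega(\mathbb{R}^n)$ is a ball quasi-Banach function space (as recalled above, see \cite[p.\,86]{shyy2017}) and $r\in(1,\infty)$ turns it into a ball Banach function space, everything that follows rests on the basic weight theory collected in Lemma~\ref{ApProperty}.

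First I would identify the convexification. Directly from Definition~\ref{tuhua}, $f\in X^{1/p}(\mathbb{R}^n)$ if and only if $\int_{\mathbb{R}^n}|f|^{r/p}\omega<\infty$, so $X^{1/p}(\mathbb{R}^n)=L^{r/p}_\omega(\mathbb{R}^n)$ with equal norms; note $r/p\in[1,\infty)$ because $p\in[1,r]$. When $r/p\in(1,\infty)$ this is again a ball Banach function space. In the endpoint case $p=r$ one has $X^{1/p}(\mathbb{R}^n)=L^1_\omega(\mathbb{R}^n)$ with $\omega\in A_1(\mathbb{R}^n)$, and here the $A_1(\mathbb{R}^n)$-condition \eqref{A1} forces $\omega$ to be bounded below by a positive constant on every ball, which is exactly what is needed for Definition~\ref{1659}(vi) (the remaining items of Definition~\ref{1659} being immediate), so $L^1_\omega(\mathbb{R}^n)$ is a ball Banach function space too. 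This gives hypothesis (i).

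Next I would dispatch (ii), (iii), and (iv). For (ii): $L^r_\omega(\mathbb{R}^n)$ is an $L^r$-space over the measure $\omega\,dx$ with $r\in(1,\infty)$ and hence has an absolutely continuous norm by the dominated convergence theorem; moreover Lemma~\ref{ApProperty}(i) gives $\omega\in A_{r/p}(\mathbb{R}^n)\subset A_r(\mathbb{R}^n)$, so by Lemma~\ref{ApProperty}(ii) we get $X'(\mathbb{R}^n)=[L^r_\omega(\mathbb{R}^n)]'=L^{r'}_{\omega^{1-r'}}(\mathbb{R}^n)$, once again a weighted Lebesgue space with a finite exponent and thus also of absolutely continuous norm. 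For (iii): if $r/p\in(1,\infty)$, then Lemma~\ref{ApProperty}(ii) applied with $r/p$ in place of the exponent yields $[X^{1/p}(\mathbb{R}^n)]'=L^{(r/p)'}_{\omega^{1-(r/p)'}}(\mathbb{R}^n)$ together with $\omega^{1-(r/p)'}\in A_{(r/p)'}(\mathbb{R}^n)$, whence $\mathcal{M}$ is bounded on $[X^{1/p}(\mathbb{R}^n)]'$ by Lemma~\ref{ApProperty}(iii); if instead $p=r$, then $[X^{1/p}(\mathbb{R}^n)]'$ is (isometrically) the weighted space $\{g\in\mathscr{M}(\mathbb{R}^n):\ g/\omega\in L^\infty(\mathbb{R}^n)\}$ with norm $\|g/\omega\|_{L^\infty(\mathbb{R}^n)}$, on which the boundedness of $\mathcal{M}$ is equivalent to the pointwise bound $\mathcal{M}(\omega)\lesssim\omega$, i.e.\ to $\omega\in A_1(\mathbb{R}^n)$, which holds by assumption. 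For (iv): when $p=1$ we have $\omega\in A_r(\mathbb{R}^n)$ with $r\in(1,\infty)$, so $\mathcal{M}$ is bounded on $L^r_\omega(\mathbb{R}^n)$ by Lemma~\ref{ApProperty}(iii). With (i)--(iv) in hand, Theorem~\ref{1931} (if $\Omega=\mathbb{R}^n$) or Theorem~\ref{1039} (if $\Omega$ is an $(\varepsilon,\infty)$-domain) yields both the characterization and the norm equivalence.

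I expect the only delicate point to be the endpoint $p=r$, where the convexification collapses to $L^1_\omega(\mathbb{R}^n)$ and its associate space is a weighted $L^\infty$-space, so that both the ball-Banach-function-space property and the maximal-operator bound must be read off from the $A_1(\mathbb{R}^n)$-condition rather than from the standard $A_q(\mathbb{R}^n)$ machinery for $q\in(1,\infty)$; all other steps amount to routine bookkeeping with Lemma~\ref{ApProperty}.
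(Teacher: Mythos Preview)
Your proposal is correct and follows essentially the same route as the paper: verify that $X(\mathbb{R}^n)=L^r_\omega(\mathbb{R}^n)$ satisfies hypotheses (i)--(iv) of Theorem~\ref{1931} and then invoke Theorems~\ref{1931} and~\ref{1039}. The paper handles (iii) by citing \cite[Theorem~5.14]{zyy2023} and (ii) by citing \cite[Theorem~1.34]{rudin}, whereas you unpack the endpoint $p=r$ explicitly via the $A_1$ condition; your treatment of that endpoint (identifying $[L^1_\omega(\mathbb{R}^n)]'$ with the weighted $L^\infty$-space and reading off the maximal bound from $\mathcal{M}(\omega)\lesssim\omega$) is correct and is exactly what lies behind the cited reference.
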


\begin{proof}
From $p\in[1,r]$ and \cite[p.\,86]{shyy2017},
it easily follows that
$[L^r_\omega(\mathbb{R}^n)]^\frac{1}{p}
=L^\frac{r}{p}_\omega(\mathbb{R}^n)$
and that both $L^r_\omega(\mathbb{R}^n)$ and
$[L^r_\omega(\mathbb{R}^n)]^\frac{1}{p}$
are ball Banach function spaces.
Notice that Lemma~\ref{ApProperty}(i) implies that
$\omega\in A_{\frac{r}{p}}(\mathbb{R}^n)
\subset A_r(\mathbb{R}^n)$.
By this and Lemma~\ref{ApProperty}(iii), we find that
the Hardy--Littlewood
maximal operator
$\mathcal{M}$
is bounded on $L^r_\omega(\mathbb{R}^n)$.
From the proof of \cite[Theorem~5.14]{zyy2023},
we infer that $\mathcal{M}$ is also bounded
on $([L^r_\omega(\mathbb{R}^n)]^\frac{1}{p})'$.
By \cite[Theorem~1.34]{rudin} and Lemma~\ref{ApProperty}(ii),
we conclude that both $L^r_\omega(\mathbb{R}^n)$
and $[L^r_\omega(\mathbb{R}^n)]'=
L^{r'}_{\omega^{1-r'}}(\mathbb{R}^n)$
have absolutely continuous norms.
Using these and Theorems~\ref{1931} and~\ref{1039},
we obtain the desired conclusions and hence
complete the proof of Theorem~\ref{2044}.
\end{proof}

\begin{remark}
\begin{enumerate}
\item[(i)]
To the best of our knowledge,
Theorem~\ref{2044} is completely new.
\item[(ii)]
Let $\omega\in A_1(\mathbb{R}^n)$.
In this case,
since $[L^1_\omega(\Omega)]'
=L^\infty_{\omega^{-1}}(\Omega)$
(see also \cite[p.\,9]{ins2019})
does not have an absolutely continuous
norm, it is still unclear whether or not
Theorem~\ref{2044} with $r=1$
holds true.
\end{enumerate}
\end{remark}

The following is a simple corollary of Theorem~\ref{2044}
when $\omega\equiv1$;
we omit the details here.

\begin{corollary}\label{Lq}
Let $\gamma\in\mathbb{R}\setminus\{0\}$, $q\in(1,\infty)$,
and $p\in[1,q]$.
Then $f\in\dot{W}^{1,q}(\Omega)$ if and only if
$f\in L_{{\mathrm{loc}}}^1(\Omega)$ and
$$
\sup_{\lambda\in(0,\infty)}\lambda
\left\|\left[\int_{\Omega}
\mathbf{1}_{E_{\lambda,\frac{\gamma}{p}}[f]}(\cdot,y)
\left|\cdot-y\right|^{\gamma-n}\,dy\right]^\frac{1}{p}
\right\|_{L^{q}(\Omega)}<\infty,
$$
where $E_{\lambda,\frac{\gamma}{p}}[f]$
for any $\lambda\in(0,\infty)$
is the same as in \eqref{Elambda};
moreover, for such $f$,
$$
\sup_{\lambda\in(0,\infty)}\lambda
\left\|\left[\int_{\Omega}
\mathbf{1}_{E_{\lambda,\frac{\gamma}{p}}[f]}(\cdot,y)
\left|\cdot-y\right|^{\gamma-n}\,dy\right]^\frac{1}{p}
\right\|_{L^{q}(\Omega)}
\sim\left\|\,|\nabla f|\,\right\|_{L^{q}(\Omega)},
$$
where the positive equivalence constants are independent of $f$.
\end{corollary}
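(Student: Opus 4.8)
The plan is to derive Corollary~\ref{Lq} as an immediate specialization of Theorem~\ref{2044} to the trivial weight $\omega\equiv1$, so the only work is to check that the hypotheses of Theorem~\ref{2044} are met and to identify the resulting spaces. First I would observe that the constant function $\omega\equiv1$ satisfies $[\omega]_{A_1(\mathbb{R}^n)}=1<\infty$, so $\omega\in A_1(\mathbb{R}^n)$; since $p\in[1,q]$ forces $q/p\in[1,\infty)$, Lemma~\ref{ApProperty}(i) then yields $\omega\in A_{q/p}(\mathbb{R}^n)$ with $[\omega]_{A_{q/p}(\mathbb{R}^n)}\leq[\omega]_{A_1(\mathbb{R}^n)}=1$. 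Hence, upon setting $r:=q$, all the requirements of Theorem~\ref{2044} hold: indeed $\gamma\in\mathbb{R}\setminus\{0\}$, $r=q\in(1,\infty)$, $p\in[1,q]=[1,r]$, and $\omega\in A_{r/p}(\mathbb{R}^n)$.

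Next I would record the elementary identifications that, directly from \eqref{2040} with $\omega\equiv1$, one has $L^q_\omega(\Omega)=L^q(\Omega)$ with $\|\cdot\|_{L^q_\omega(\Omega)}=\|\cdot\|_{L^q(\Omega)}$, and therefore, by Definition~\ref{2.7}(i), $\dot{W}^{1,q}_\omega(\Omega)=\dot{W}^{1,q}(\Omega)$ with the same semi-norm. Substituting these identifications into the conclusion of Theorem~\ref{2044} then produces exactly the stated characterization of $\dot{W}^{1,q}(\Omega)$ in terms of \eqref{1628} with $X(\Omega):=L^q(\Omega)$, together with the equivalence
\begin{align*}
\sup_{\lambda\in(0,\infty)}\lambda
\left\|\left[\int_{\Omega}
\mathbf{1}_{E_{\lambda,\frac{\gamma}{p}}[f]}(\cdot,y)
\left|\cdot-y\right|^{\gamma-n}\,dy\right]^\frac{1}{p}
\right\|_{L^{q}(\Omega)}
\sim\left\|\,|\nabla f|\,\right\|_{L^{q}(\Omega)},
\end{align*}
with equivalence constants independent of $f$. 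There is no genuine obstacle in this argument; the only point requiring verification is the membership $\omega\equiv1\in A_{q/p}(\mathbb{R}^n)$, which, as noted above, is immediate from $q/p\geq1$ and the inclusion $A_1(\mathbb{R}^n)\subset A_{q/p}(\mathbb{R}^n)$ furnished by Lemma~\ref{ApProperty}(i). Accordingly, the details are omitted.
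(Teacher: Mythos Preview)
Your proposal is correct and matches the paper's approach exactly: the paper explicitly states that Corollary~\ref{Lq} is a simple corollary of Theorem~\ref{2044} with $\omega\equiv1$ and omits the details. Your verification that $\omega\equiv1\in A_{q/p}(\mathbb{R}^n)$ via Lemma~\ref{ApProperty}(i) and the identification $L^q_\omega(\Omega)=L^q(\Omega)$ are precisely the routine checks the paper leaves implicit.
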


\begin{remark}
Corollary~\ref{Lq} when $p=q\in(1,\infty)$
and $\Omega:=\mathbb{R}^n$ is
just \cite[Theorem~1.3]{bsvy.arxiv}
and when $p\in[1,q)$ or when $\Omega\subsetneqq
\mathbb{R}^n$ is new.
\end{remark}

We next turn to the case of Morrey spaces.
Let $0<r\leq\alpha<\infty$.
The \emph{Morrey space} $M_r^\alpha(\mathbb{R}^n)$,
introduced by Morrey \cite{m1938},
is defined to be the set of
all the $f\in\mathscr{M}(\mathbb{R}^n)$
having the finite quasi-norm
\begin{align*}
\|f\|_{M_r^\alpha(\mathbb{R}^n)}:=\sup_{B\in\mathbb{B}}
\left|B\right|^{\frac{1}{\alpha}-\frac{1}{r}}\|f\|_{L^r(B)},
\end{align*}
where $\mathbb{B}:=\{B(x,r):\ x\in\mathbb{R}^n,\,r\in(0,\infty)\}$.
Nowadays, Morrey spaces have found important
applications in the theory of
partial differential equations, harmonic analysis,
and potential theory
(see, for instance, the articles \cite{hms2017,hms2020,hss2018,
hs2017,tyy2019}
and the monographs \cite{sdh2020i,sdh20202,ysy2010}).
As was pointed out in \cite[p.\,87]{shyy2017},
$M_r^\alpha(\mathbb{R}^n)$ for any $1\leq r\leq\alpha<\infty$
is a ball Banach function space,
but is not a Banach function space
in the sense of Bennett and Sharpley \cite{bs1988}.
Since the Morrey space $M_r^\alpha(\mathbb{R}^n)$
does not have an absolutely continuous norm,
Theorems~\ref{1931} and~\ref{1039} seem to be inapplicable in
the Morrey setting.
However, we can use Theorem~\ref{2044} to further
obtain a characterization similar to Theorems~\ref{1931} and~\ref{1039}
for Morrey--Sobolev spaces.

\begin{theorem}\label{Morrey}
Let $\gamma\in\mathbb{R}\setminus\{0\}$,
$1<r\leq\alpha<\infty$, and $p\in[1,r]$.
Then $f\in\dot{W}^{1,M_r^\alpha}(\Omega)$
if and only if $f\in L_{{\mathrm{loc}}}^1(\Omega)$ and
\begin{align*}
\sup_{\lambda\in(0,\infty)}\lambda
\left\|\left[\int_{\Omega}
\mathbf{1}_{E_{\lambda,\frac{\gamma}{p}}[f]}(\cdot,y)
\left|\cdot-y\right|^{\gamma-n}\,dy\right]^\frac{1}{p}
\right\|_{M_r^\alpha(\Omega)}<\infty,
\end{align*}
where $E_{\lambda,\frac{\gamma}{p}}[f]$
for any $\lambda\in(0,\infty)$
is the same as in \eqref{Elambda};
moreover, for such $f$,
$$
\sup_{\lambda\in(0,\infty)}\lambda
\left\|\left[\int_{\Omega}
\mathbf{1}_{E_{\lambda,\frac{\gamma}{p}}[f]}(\cdot,y)
\left|\cdot-y\right|^{\gamma-n}\,dy\right]^\frac{1}{p}
\right\|_{M_r^\alpha(\Omega)}
\sim\left\|\,|\nabla f|\,\right\|_{M_r^\alpha(\Omega)},
$$
where the positive equivalence constants are independent of $f$.
\end{theorem}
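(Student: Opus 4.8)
The plan is to reduce Theorem~\ref{Morrey} to the weighted Lebesgue case already settled in Theorem~\ref{2044}, exploiting the fact that the Morrey norm can be realized as a supremum of weighted Lebesgue norms over a family of Muckenhoupt $A_1$-weights with a uniform $A_1$-bound. First I would recall the well-known characterization: for $1<r\le\alpha<\infty$ there exist a constant $C_0\in[1,\infty)$ and a family $\mathcal{W}$ of weights $\omega\in A_1(\mathbb{R}^n)$, with $[\omega]_{A_1(\mathbb{R}^n)}\le C_0$ (suitably normalized, via the Rubio de Francia iteration attached to the exponent $(\alpha/r)'$), such that, for any $g\in\mathscr{M}(\mathbb{R}^n)$,
\[
\|g\|_{M_r^\alpha(\mathbb{R}^n)}\sim\sup_{\omega\in\mathcal{W}}\|g\|_{L^r_\omega(\mathbb{R}^n)},
\]
where the implicit positive constants depend only on $n$, $r$, and $\alpha$ (see, e.g., \cite{sdh2020i,dlyyz.arxiv}). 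By Remark~\ref{norm}(i), applied to both $M_r^\alpha$ and each $L^r_\omega$ (extending functions on $\Omega$ by $0$ to $\mathbb{R}^n$), the same equivalence holds verbatim with $\mathbb{R}^n$ replaced by $\Omega$. Moreover the hypothesis ``$f\in L^1_{\mathrm{loc}}(\Omega)$'' is common to Theorems~\ref{Morrey} and~\ref{2044} and does not refer to any function space.

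Next I would verify the crucial point: Theorem~\ref{2044} applies to every $L^r_\omega(\Omega)$ with $\omega\in\mathcal{W}$ \emph{with equivalence constants that are independent of $\omega\in\mathcal{W}$}. Indeed, by Lemma~\ref{ApProperty}(i), every $\omega\in\mathcal{W}$ lies in $A_{r/p}(\mathbb{R}^n)$ with $[\omega]_{A_{r/p}(\mathbb{R}^n)}\le[\omega]_{A_1(\mathbb{R}^n)}\le C_0$, so $L^r_\omega$ meets all hypotheses of Theorem~\ref{2044}; and tracing through the proof of Theorem~\ref{2044}---hence through Theorems~\ref{1931} and~\ref{1039} and the underlying Lemmas~\ref{1016},~\ref{2031},~\ref{2006}, and~\ref{4.6}---one sees that every constant entering the estimates depends on $\omega$ only through the operator norms of $\mathcal{M}$ on $L^r_\omega(\mathbb{R}^n)$ and on $([L^r_\omega(\mathbb{R}^n)]^{1/p})'=L^{(r/p)'}_{\omega^{1-(r/p)'}}(\mathbb{R}^n)$, each of which, by Lemma~\ref{ApProperty}(ii) and (iii), is bounded by a power of $[\omega]_{A_{r/p}(\mathbb{R}^n)}\le C_0$. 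Hence there is a single constant $C$ such that, for every $f\in L^1_{\mathrm{loc}}(\Omega)$ and every $\omega\in\mathcal{W}$, the weighted Brezis--Seeger--Van Schaftingen--Yung functional
\[
\Phi_\omega(f):=\sup_{\lambda\in(0,\infty)}\lambda\left\|\left[\int_{\Omega}\mathbf{1}_{E_{\lambda,\frac{\gamma}{p}}[f]}(\cdot,y)|\cdot-y|^{\gamma-n}\,dy\right]^{\frac1p}\right\|_{L^r_\omega(\Omega)}
\]
is finite if and only if $f\in\dot{W}^{1,r}_\omega(\Omega)$, in which case $C^{-1}\Phi_\omega(f)\le\|\,|\nabla f|\,\|_{L^r_\omega(\Omega)}\le C\Phi_\omega(f)$.

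With these two ingredients the argument is short. Writing $\Phi_M(f)$ for the Morrey functional in Theorem~\ref{Morrey}, Step~1 (applied to each slice $g=[\int_\Omega\mathbf{1}_{E_{\lambda,\gamma/p}[f]}(\cdot,y)|\cdot-y|^{\gamma-n}\,dy]^{1/p}$ and with the supremum over $\omega$ interchanged with the supremum over $\lambda$) gives $\Phi_M(f)\sim\sup_{\omega\in\mathcal{W}}\Phi_\omega(f)$. For the necessity, if $f\in\dot{W}^{1,M_r^\alpha}(\Omega)$, then $|\nabla f|\in M_r^\alpha(\Omega)$, so $\|\,|\nabla f|\,\|_{L^r_\omega(\Omega)}\lesssim\|\,|\nabla f|\,\|_{M_r^\alpha(\Omega)}$ for all $\omega\in\mathcal{W}$; by Step~2, $\Phi_\omega(f)\lesssim\|\,|\nabla f|\,\|_{M_r^\alpha(\Omega)}$ uniformly in $\omega$, and taking the supremum over $\omega\in\mathcal{W}$ yields $\Phi_M(f)\lesssim\|\,|\nabla f|\,\|_{M_r^\alpha(\Omega)}<\infty$. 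For the sufficiency, if $f\in L^1_{\mathrm{loc}}(\Omega)$ with $\Phi_M(f)<\infty$, then $\Phi_\omega(f)\lesssim\Phi_M(f)<\infty$ for every $\omega\in\mathcal{W}$, so Theorem~\ref{2044} gives $f\in\dot{W}^{1,r}_\omega(\Omega)$; since the weak gradient is unique, it is a single function $\nabla f$ independent of $\omega$, with $\|\,|\nabla f|\,\|_{L^r_\omega(\Omega)}\lesssim\Phi_M(f)$ for all $\omega\in\mathcal{W}$, whence $\|\,|\nabla f|\,\|_{M_r^\alpha(\Omega)}\lesssim\Phi_M(f)<\infty$, i.e., $f\in\dot{W}^{1,M_r^\alpha}(\Omega)$. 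Combining the two estimates gives $\Phi_M(f)\sim\|\,|\nabla f|\,\|_{M_r^\alpha(\Omega)}$.

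The main obstacle I anticipate is precisely the uniformity assertion in Step~2: one must revisit the proofs of Theorem~\ref{2044} and of Theorems~\ref{1931} and~\ref{1039} carefully enough to confirm that no constant carries a hidden dependence on $\omega$ (for instance through $\|\mathbf{1}_{B_R}\|_{L^r_\omega}$ for a fixed ball, or through the Alaoglu compactness step), beyond what is controlled by $[\omega]_{A_{r/p}(\mathbb{R}^n)}\le C_0$. Once this is in hand, the remaining ingredients---transferring the Morrey--weight equivalence to $\Omega$ via Remark~\ref{norm} and interchanging the two suprema---are entirely routine.
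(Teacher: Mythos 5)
Your proposal is correct and follows essentially the same route as the paper: realize $\|\cdot\|_{M_r^\alpha}$ as a supremum of uniformly $A_1$-weighted $L^r$ norms (the paper uses the specific family $\omega_Q:=[\mathcal{M}(\mathbf{1}_Q)]^\theta$ from \cite[Proposition~285]{sdh2020i}, with the normalization $|Q|^{1/\alpha-1/r}$ absorbed into the weight, which is one concrete instance of the family $\mathcal{W}$ you describe), apply Theorem~\ref{2044} for each fixed weight with constants controlled solely through $[\omega]_{A_1}$, and interchange the suprema. The uniformity-in-$\omega$ point you flag as the main obstacle is indeed the crux, and it is built into Lemmas~\ref{4.6} and~\ref{ApProperty} exactly as you anticipate.
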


\begin{proof}
Fix $\theta\in(1-\frac{r}{\alpha},1)$.
By \cite[Proposition~285]{sdh2020i},
we find that, for any $g\in\mathscr{M}(\mathbb{R}^n)$,
\begin{align}\label{2106}
\left\|g\right\|_{M_r^\alpha(\mathbb{R}^n)}\sim
\sup_{Q\subset\mathbb{R}^n}
|Q|^{\frac{1}{\alpha}-\frac{1}{r}}
\|g\|_{L^r_{[\mathcal{M}(\mathbf{1}_Q)]^\theta}(\mathbb{R}^n)},
\end{align}
where the supremum is taken over all cubes $Q\subset\mathbb{R}^n$
and where the the positive equivalence constants depend only on
$r$, $\alpha$, $n$, and $\theta$.
From \cite[Theorem~281]{sdh2020i} and $\theta\in(0,1)$,
we deduce that,
for any cube  $Q\subset\mathbb{R}^n$,
$[\mathcal{M}(\mathbf{1}_Q)]^\theta\in A_1(\mathbb{R}^n)$
and
$[\{\mathcal{M}(\mathbf{1}_Q)\}^\theta
]_{A_1(\mathbb{R}^n)}\lesssim1$,
where the implicit positive constant depends only on $\theta$.
By this, Remark~\ref{norm}(i),
\eqref{2106}, and Theorem~\ref{2044}, we conclude that,
for any $f\in L_{{\mathrm{loc}}}^1(\Omega)$,
\begin{align*}
\left\|\,|\nabla f|\,\right\|_{M_r^\alpha(\Omega)}
&=\left\|\widetilde{|\nabla f|}
\right\|_{M_r^\alpha(\mathbb{R}^n)}
\sim\sup_{Q\subset\mathbb{R}^n}
|Q|^{\frac{1}{\alpha}-\frac{1}{r}}
\left\|\widetilde{|\nabla f|}\right\|_
{L^r_{[\mathcal{M}(\mathbf{1}_Q)]^\theta}(\mathbb{R}^n)}\\
&=\sup_{Q\subset\mathbb{R}^n}
|Q|^{\frac{1}{\alpha}-\frac{1}{r}}
\left\|\,|\nabla f|\,\right\|_
{L^r_{[\mathcal{M}(\mathbf{1}_Q)]^\theta}(\Omega)}\\
&\sim\sup_{Q\subset\mathbb{R}^n}
|Q|^{\frac{1}{\alpha}-\frac{1}{r}}\\
&\quad\times\sup_{\lambda\in(0,\infty)}\lambda
\left\|\left[\int_{\Omega}
\mathbf{1}_{E_{\lambda,\frac{\gamma}{p}}[f]}(\cdot,y)
\left|\cdot-y\right|^{\gamma-n}\,dy\right]^\frac{1}{p}
\right\|_
{L^r_{[\mathcal{M}(\mathbf{1}_Q)]^\theta}(\Omega)}\\
&\sim\sup_{\lambda\in(0,\infty)}\lambda
\left\|\left[\int_{\Omega}
\mathbf{1}_{E_{\lambda,\frac{\gamma}{p}}[f]}(\cdot,y)
\left|\cdot-y\right|^{\gamma-n}\,dy\right]^\frac{1}{p}
\right\|_{M_r^\alpha(\Omega)},
\end{align*}
where $\widetilde{|\nabla f|}$
is defined the same as in \eqref{1448}
with $f$ replaced by $|\nabla f|$.
This finishes the proof of Theorem~\ref{Morrey}.
\end{proof}

\begin{remark}
To the best of our knowledge,
Theorem~\ref{Morrey} is completely new.
\end{remark}

\subsection{Besov--Bourgain--Morrey Spaces}
\label{BBMspace}

It is well known that Bourgain--Morrey spaces
(see \cite[Definition~1.1]{hnsh2022} for its definition),
whose special case was introduced by Bourgain \cite{bourgain},
play an essential role in the study related to both the
Strichartz estimate and the nonlinear Schr\"odinger equation.
As a generalization of the Bourgain--Morrey space,
the Besov--Bourgain--Morrey space was recently
introduced by Zhao et al. \cite{zstyy2022},
which is a bridge connecting Bourgain--Morrey spaces
with Fofana spaces;
see \cite[p.\,525]{f1989} for the definition
of the Fofana space.
We refer the reader to
\cite{hnsh2022,hly2023,m1,m2,zstyy2022}
for more studies on Bourgain--Morrey-type spaces
and to \cite{cf2019,f1988,ffk2015}
for more studies on Fofana spaces.

To give the definition of the
Besov--Bourgain--Morrey space,
we begin with recalling the following adjacent system
of dyadic cubes of $\mathbb{R}^n$,
which can be
found in \cite[Section~2.2]{lsu2012}.

\begin{lemma}\label{2115}
For any $\alpha\in\{0,\frac{1}{3},\frac{2}{3}\}^n$, let
$$
\mathcal{D}^\alpha:=\left\{
Q_{\nu,m}^{(\alpha)}:=2^\nu\left[m+(0,1]^n+(-1)^\nu\alpha\right]:\
\nu\in\mathbb{Z},\ m\in\mathbb{Z}^n\right\}.
$$
Then
\begin{enumerate}
\item[\textup{(i)}]
for any $Q,P\in\mathcal{D}^\alpha$ with
$\alpha\in\{0,\frac{1}{3},\frac{2}{3}\}^n$,
$Q\cap P\in\{\emptyset,Q,P\}$;
\item[\textup{(ii)}]
for any ball $B\subset\mathbb{R}^n$, there exist
$\alpha\in\{0,\frac{1}{3},\frac{2}{3}\}^n$ and $Q\in\mathcal{D}^\alpha$
such that $B\subset Q\subset CB$,
where the positive constant $C$ depends only on $n$.
\end{enumerate}
\end{lemma}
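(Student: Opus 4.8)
The plan is to prove (i) and (ii) separately, reducing each to a one‑dimensional bookkeeping problem that makes the special role of the value set $\{0,\frac13,\frac23\}$ transparent. Throughout, fix $\alpha\in\{0,\frac13,\frac23\}^n$ and write $t_\nu:=(-1)^\nu\alpha\in\mathbb{R}^n$, so that
$Q_{\nu,m}^{(\alpha)}=\prod_{i=1}^n\big(2^\nu(m_i+t_{\nu,i}),\,2^\nu(m_i+t_{\nu,i}+1)\big]$.

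For (i), first note that, for each fixed $\nu\in\mathbb{Z}$, the family $\{Q_{\nu,m}^{(\alpha)}\}_{m\in\mathbb{Z}^n}$ is a partition of $\mathbb{R}^n$, since $\{m+(0,1]^n\}_{m\in\mathbb{Z}^n}$ tiles $\mathbb{R}^n$ and this is preserved by the affine map $x\mapsto 2^\nu(x+t_\nu)$. Next I would establish the single \emph{one-step nesting} claim: every $Q_{\nu-1,k}^{(\alpha)}$ is contained in some (necessarily unique, by the partition property) $Q_{\nu,m}^{(\alpha)}$. Since $t_{\nu-1}=-t_\nu$, multiplying by $2^{-(\nu-1)}$ reduces this, coordinate by coordinate, to: given $k_i\in\mathbb{Z}$, there is an integer $m_i$ with $2m_i\in[\,k_i-3t_{\nu,i}-1,\ k_i-3t_{\nu,i}\,]$. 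This is the crux: because $\alpha_i\in\{0,\frac13,\frac23\}$, the quantity $3t_{\nu,i}=3(-1)^\nu\alpha_i$ is an integer in $\{0,\pm1,\pm2\}$, so the interval has integer endpoints differing by $1$ and thus contains exactly one even integer, which pins down $m_i$. From one-step nesting, the full statement of (i) follows by a standard transitivity argument together with same-scale disjointness: for $Q,P\in\mathcal{D}^\alpha$ of scales $\nu_1\le\nu_2$, the unique scale-$\nu_2$ cube $\widetilde Q\supseteq Q$ is either equal to $P$ (giving $Q\cap P=Q$) or disjoint from $P$ (giving $Q\cap P=\emptyset$).

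For (ii), given $B=B(x,r)$, I would choose $\nu\in\mathbb{Z}$ minimal with $2^\nu\ge 6r$, so that $2^\nu<12r$, and then choose $\alpha$ coordinatewise so that $x$ sits deep inside its scale-$\nu$ cube in $\mathcal{D}^\alpha$. In the $i$-th coordinate, the relevant boundary hyperplanes at scale $\nu$ form the progression $2^\nu(\mathbb{Z}+t_{\nu,i})$ of step $2^\nu$, and since $t_{\nu,i}\bmod 1$ ranges over $\{0,\frac13,\frac23\}$ as $\alpha_i$ does, the three progressions obtained from the three choices of $\alpha_i$ are mutually staggered by exactly $2^\nu/3$. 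An elementary observation — an open interval of length $2^\nu/3$ cannot contain two points $2^\nu/3$ apart — shows that $x_i$ can be within distance $2^\nu/6$ of the lines of at most one of the three progressions; choosing for each $i$ one of the remaining (at least two) values of $\alpha_i$ gives $\mathrm{dist}\big(x_i,\,2^\nu(\mathbb{Z}+t_{\nu,i})\big)\ge 2^\nu/6\ge r$. Then the unique $Q\in\mathcal{D}^\alpha$ of scale $\nu$ with $x\in Q$ satisfies $B(x,r)\subseteq Q$ (a short half-open-interval check), and since $x\in Q$ and $\mathrm{diam}\,Q=\sqrt n\,2^\nu<12\sqrt n\,r$, we get $Q\subseteq B(x,12\sqrt n\,r)$; thus (ii) holds with $C:=12\sqrt n$.

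I do not expect a serious obstacle: the tilings, the transitivity in (i), and the interval bookkeeping in (ii) are all routine. The one point to get exactly right — and the thing I would highlight first — is the arithmetic of $\{0,\frac13,\frac23\}$, which plays a dual role: in (i) it forces $3t_{\nu,i}\in\mathbb{Z}$, so the parent cube is uniquely determined; in (ii) it makes the three dyadic-type grids equi-staggered by a third of the period, which is precisely what lets each coordinate of $x$ dodge the grid lines at a scale comparable to $r$.
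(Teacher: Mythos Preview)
Your argument is correct in both parts: the one-step nesting reduction in (i) goes through because $3(-1)^\nu\alpha_i\in\mathbb{Z}$ forces the interval $[k_i-3t_{\nu,i}-1,\,k_i-3t_{\nu,i}]$ to have integer endpoints (hence a unique even integer inside), and in (ii) the ``one-third stagger'' of the three grids guarantees that each coordinate of the center can avoid the grid lines by at least $2^\nu/6\ge r$, giving $B\subset Q\subset 12\sqrt{n}\,B$.

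The paper, however, does not give any proof of this lemma at all; it simply quotes the statement from \cite[Section~2.2]{lsu2012} (the standard construction of adjacent dyadic systems, often called the ``one-third trick'' or Hyt\"onen's shifted dyadic systems). What you have written is precisely the argument one finds in that reference, so there is nothing to compare beyond noting that your write-up is self-contained whereas the paper defers to the literature. Your explicit constant $C=12\sqrt{n}$ is of the right order; different presentations produce slightly different numerical values depending on how $\nu$ is chosen relative to~$r$.
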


The following definitions of
the Besov--Bourgain--Morrey space,
the block, and the block space can be founded in
\cite[Definitions~1.2, 3.1, and 3.2]{zstyy2022}.

\begin{definition}\label{2209}
\begin{enumerate}
\item[(i)]
Let $0<q\leq p\leq r\leq\infty$, $\tau\in(0,\infty]$,
and $\mathcal{D}:=\{Q_{\nu,m}\}_{\nu\in\mathbb{Z},\,
m\in\mathbb{Z}^n}$ be a system of dyadic cubes of $\mathbb{R}^n$.
The \emph{Besov--Bourgain--Morrey space}
$\mathcal{M}\dot{B}^{p,\tau}_{q,r}(\mathbb{R}^n)$
is defined to be the set of all the $f\in L_{\mathrm{loc}}^q(\mathbb{R}^n)$
such that
$$
\left\|f\right\|_{\mathcal{M}\dot{B}^{p,\tau}_{q,r}(\mathbb{R}^n)}
:=\left\{\sum_{\nu\in\mathbb{Z}}
\left[\sum_{m\in\mathbb{Z}^n}\left\{|Q_{\nu,m}|^{\frac{1}{p}-\frac{1}{q}}
\left[\int_{Q_{\nu,m}}|f(y)|^q\,dy\right]^\frac{1}{q}\right\}^r
\right]^\frac{\tau}{r}\right\}^\frac{1}{\tau},
$$
with the usual modifications made when
$q=\infty$, $r=\infty$, or $\tau=\infty$,
is finite.
\item[(ii)]
Let $1\leq q\leq p\leq\infty$.
A function $b\in\mathscr{M}(\mathbb{R}^n)$ is
called a \emph{$(p',q')$-block} if there exists
a dyadic cube $Q\subset\mathbb{R}^n$ such that
$\mathrm{supp\,}(b)\subset Q$
and
$\|b\|_{L^{q'}(\mathbb{R}^n)}
\leq|Q|^{\frac{1}{q'}-\frac{1}{p'}}$.
\item[(iii)]
Let $1\leq q\leq p\leq r\leq\infty$
and $\tau\in[1,\infty]$. The \emph{block space}
$\mathcal{B}^{p',\tau'}_{q',r'}(\mathbb{R}^n)$
is defined to be the set of all the $f\in\mathscr{M}(\mathbb{R}^n)$
such that
$f=\sum_{\nu\in\mathbb{Z}}\sum_{m\in\mathbb{Z}^n}
\lambda_{\nu,m}b_{\nu,m}$
almost everywhere in $\mathbb{R}^n$,
where the sequence $\{\lambda_{\nu,m}\}_{\nu\in\mathbb{Z},\,m\in\mathbb{Z}^n}$
satisfies
$$
\left\|\{\lambda_{\nu,m}\}_{\nu\in\mathbb{Z},\,m\in\mathbb{Z}^n}
\right\|_{\ell_\nu^{\tau'}(\ell_m^{r'})}:=
\left[\sum_{\nu\in\mathbb{Z}}\left(\sum_{m\in\mathbb{Z}^n}
\left|\lambda_{\nu,m}\right|^{r'}
\right)^\frac{\tau'}{r'}\right]^\frac{1}{\tau'}<\infty
$$
and where $b_{\nu,m}$ is a $(p',q')$-block supported
in $Q_{\nu,m}$ for any $\nu\in\mathbb{Z}$ and $m\in\mathbb{Z}^n$.
Moreover, for any $f\in\mathcal{B}^{p',\tau'}_{q',r'}(\mathbb{R}^n)$,
$$
\left\|f\right\|_{\mathcal{B}^{p',\tau'}_{q',r'}(\mathbb{R}^n)}:=
\inf\left\{\left\|\{\lambda_{\nu,m}\}_{\nu\in\mathbb{Z},\,m\in\mathbb{Z}^n}
\right\|_{\ell_\nu^{\tau'}(\ell_m^{r'})}:\
f=\sum_{\nu\in\mathbb{Z}}\sum_{m\in\mathbb{Z}^n}
\lambda_{\nu,m}b_{\nu,m}\right\},
$$
where the infimum is taken over all the decompositions of $f$ as above.
\end{enumerate}
\end{definition}

The following lemma can be found in
\cite[Theorems~3.6 and~3.8]{zstyy2022}
which indicates that the dual space of
the Besov--Bourgain--Morrey space
is just the block space and that the dual space
of the block space is just the
Besov--Bourgain--Morrey space.

\begin{lemma}\label{duals}
\begin{enumerate}
\item[\rm(i)]
If $1<q<p<r\leq\infty$ and $\tau\in(1,\infty)$
or if $1<q\leq p\leq r\leq\infty$ and $\tau=\infty$,
then the dual space of $\mathcal{B}^{p',\tau'}_{q',r'}(\mathbb{R}^n)$
is $\mathcal{M}\dot{B}^{p,\tau}_{q,r}(\mathbb{R}^n)$
in the following sense:
\begin{enumerate}
\item[$\rm(i)_1$]
if $g\in\mathcal{M}\dot{B}^{p,\tau}_{q,r}(\mathbb{R}^n)$,
then the linear functional
\begin{align}\label{3.6}
\mathcal{J}_g:\ f\to\mathcal{J}_g(f)
:=\int_{\mathbb{R}^n}f(x)g(x)\,dx
\end{align}
is bounded on $\mathcal{B}^{p',\tau'}_{q',r'}(\mathbb{R}^n)$;
\item[$\rm(i)_2$]
conversely, any continuous linear functional on
$\mathcal{B}^{p',\tau'}_{q',r'}(\mathbb{R}^n)$ arises as
in \eqref{3.6} with a unique $g\in\mathcal{M}\dot{B}^{p,\tau}_{q,r}(\mathbb{R}^n)$;
\end{enumerate}
moreover, $\|g\|_{\mathcal{M}\dot{B}^{p,\tau}_{q,r}(\mathbb{R}^n)}
=\|\mathcal{J}_g\|_{[\mathcal{B}^{p',\tau'}_{q',r'}(\mathbb{R}^n)]^*}$.
\item[\rm(ii)]
If $1<q<p<r<\infty$ and $\tau\in(1,\infty)$,
then the dual space of $\mathcal{M}\dot{B}^{p,\tau}_{q,r}(\mathbb{R}^n)$
is $\mathcal{B}^{p',\tau'}_{q',r'}(\mathbb{R}^n)$ in the sense of (i)
with $\mathcal{M}\dot{B}^{p,\tau}_{q,r}(\mathbb{R}^n)$
and $\mathcal{B}^{p',\tau'}_{q',r'}(\mathbb{R}^n)$
replaced, respectively, by
$\mathcal{B}^{p',\tau'}_{q',r'}(\mathbb{R}^n)$ and
$\mathcal{M}\dot{B}^{p,\tau}_{q,r}(\mathbb{R}^n)$.
\end{enumerate}
\end{lemma}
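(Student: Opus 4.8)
The plan is to deduce both dualities from the standard $L^{q'}$--$L^q$ Lebesgue duality on individual dyadic cubes, combined with the $\ell^{\tau'}(\ell^{r'})$--$\ell^{\tau}(\ell^{r})$ duality of the indexing mixed-norm sequence spaces. For the easy half $(i)_1$, let $g\in\mathcal{M}\dot{B}^{p,\tau}_{q,r}(\mathbb{R}^n)$ and let $f\in\mathcal{B}^{p',\tau'}_{q',r'}(\mathbb{R}^n)$ have a block decomposition $f=\sum_{\nu\in\mathbb{Z}}\sum_{m\in\mathbb{Z}^n}\lambda_{\nu,m}b_{\nu,m}$. First I would use Hölder's inequality on $Q_{\nu,m}$ together with the size condition $\|b_{\nu,m}\|_{L^{q'}(\mathbb{R}^n)}\le|Q_{\nu,m}|^{1/q'-1/p'}=|Q_{\nu,m}|^{1/p-1/q}$ to bound
$$
\left|\int_{\mathbb{R}^n}b_{\nu,m}(x)g(x)\,dx\right|\le|Q_{\nu,m}|^{\frac1p-\frac1q}\left[\int_{Q_{\nu,m}}|g(y)|^q\,dy\right]^{\frac1q},
$$
and then apply Hölder in $m$ (with exponents $r',r$) and in $\nu$ (with exponents $\tau',\tau$) to obtain $|\mathcal{J}_g(f)|\le\|\{\lambda_{\nu,m}\}\|_{\ell_\nu^{\tau'}(\ell_m^{r'})}\|g\|_{\mathcal{M}\dot{B}^{p,\tau}_{q,r}(\mathbb{R}^n)}$. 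Taking the infimum over all decompositions of $f$ gives that $\mathcal{J}_g$ is bounded on $\mathcal{B}^{p',\tau'}_{q',r'}(\mathbb{R}^n)$ with norm at most $\|g\|_{\mathcal{M}\dot{B}^{p,\tau}_{q,r}(\mathbb{R}^n)}$.

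The harder half $(i)_2$ is the representation. Given $L\in[\mathcal{B}^{p',\tau'}_{q',r'}(\mathbb{R}^n)]^*$, I would fix a dyadic cube $Q$ and note that, modulo the normalizing factor $|Q|^{1/q'-1/p'}$, every $h\in L^{q'}(Q)$ supported in $Q$ is a scalar multiple of a $(p',q')$-block; hence $L$ restricts to a bounded functional on $L^{q'}(Q)$ and, since $1<q'<\infty$, the Riesz representation theorem produces a unique $g_Q\in L^q(Q)$ with $L(h)=\int_Q h\,g_Q$. Uniqueness forces the $g_Q$ to be compatible on overlaps, so they glue to a single $g\in L^q_{\mathrm{loc}}(\mathbb{R}^n)$. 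To see that $g\in\mathcal{M}\dot{B}^{p,\tau}_{q,r}(\mathbb{R}^n)$ with $\|g\|_{\mathcal{M}\dot{B}^{p,\tau}_{q,r}(\mathbb{R}^n)}\le\|L\|$, I would work first over a finite index set: on each $Q_{\nu,m}$ take $b_{\nu,m}$ proportional to $(\mathrm{sgn}\,g)\,|g|^{q-1}\mathbf{1}_{Q_{\nu,m}}$, normalized to be a $(p',q')$-block, so that testing against it recovers $\|g\|_{L^q(Q_{\nu,m})}$ up to the normalization, and choose coefficients $\lambda_{\nu,m}$ that are near-extremal for the $\ell_\nu^{\tau'}(\ell_m^{r'})$--$\ell_\nu^{\tau}(\ell_m^{r})$ pairing; evaluating $L$ on $\sum\lambda_{\nu,m}b_{\nu,m}$ then bounds $\|L\|$ from below by the truncated $\mathcal{M}\dot{B}^{p,\tau}_{q,r}$-norm, and a monotone convergence argument lets the index set exhaust $\mathbb{Z}\times\mathbb{Z}^n$. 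The case restrictions ($\tau\in(1,\infty)$, or $\tau=\infty$ with $\tau'=1$) are precisely what guarantee the sequence-space duality $[\ell^{\tau'}_\nu(\ell^{r'}_m)]^*=\ell^{\tau}_\nu(\ell^{r}_m)$ in the direction needed here.

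For (ii), where $1<q<p<r<\infty$ and $\tau\in(1,\infty)$, all of $q$, $r$, $\tau$ lie in $(1,\infty)$, so the factors $L^q(Q_{\nu,m})$, $\ell^r$, $\ell^\tau$ are reflexive. The plan is to realize $\mathcal{M}\dot{B}^{p,\tau}_{q,r}(\mathbb{R}^n)$ isometrically as a closed subspace $\mathcal{S}$ of the vector-valued mixed-norm space $E:=\ell^{\tau}_\nu(\ell^{r}_m(L^q(Q_{\nu,m})))$ via $f\mapsto(|Q_{\nu,m}|^{1/p-1/q}f\mathbf{1}_{Q_{\nu,m}})_{\nu,m}$, closedness following from the completeness of the factors and the fact that the cubes at each fixed scale partition $\mathbb{R}^n$ (so a limiting sequence of these tuples determines a single $f\in L^q_{\mathrm{loc}}$). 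Computing the dual of $E$ factor by factor gives $E^*=\ell^{\tau'}_\nu(\ell^{r'}_m(L^{q'}(Q_{\nu,m})))$, and by Hahn--Banach $[\mathcal{M}\dot{B}^{p,\tau}_{q,r}(\mathbb{R}^n)]^*$ is the quotient $E^*/\mathcal{S}^\perp$. I would identify this quotient with $\mathcal{B}^{p',\tau'}_{q',r'}(\mathbb{R}^n)$ by sending a coset represented by $(\mu_{\nu,m})_{\nu,m}$ to $\sum_{\nu,m}\mu_{\nu,m}$, read as a block decomposition once each $\mu_{\nu,m}$ is renormalized into a $(p',q')$-block, and check that the quotient norm coincides with the block-space norm: one inequality is $(i)_1$, the other uses the explicit decomposition. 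Running part (i) with the roles of the two spaces reversed then completes the identification.

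The hard part will be the representation step $(i)_2$ and the accompanying norm identification in (ii): one has to construct the representing function, bound its norm \emph{from below} by choosing essentially optimal blocks and optimal coefficient sequences simultaneously, and pass from finitely many dyadic indices to the full doubly infinite index set by truncation and monotone convergence, all the while keeping each test block normalized so that its $L^{q'}$-norm stays inside the prescribed bound. Everything else reduces to a bookkeeping chain of Hölder inequalities together with the $L^{q'}$--$L^q$ and $\ell^{\tau'}(\ell^{r'})$--$\ell^{\tau}(\ell^{r})$ dualities.
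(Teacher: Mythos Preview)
The paper does not prove this lemma at all: it is stated with the preamble ``The following lemma can be found in \cite[Theorems~3.6 and~3.8]{zstyy2022}\ldots'' and no argument is given. So there is no in-paper proof to compare your proposal against.

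That said, your outline is essentially the standard route one would expect in the cited source. The easy direction $(i)_1$ via H\"older on each $Q_{\nu,m}$ followed by the $\ell^{r'}$--$\ell^{r}$ and $\ell^{\tau'}$--$\ell^{\tau}$ pairings is exactly right. For $(i)_2$, localizing $L$ to each dyadic cube, invoking $L^{q'}$--$L^{q}$ duality to produce $g_Q$, and gluing via the nested structure of dyadic cubes is the canonical construction; your lower bound on $\|g\|_{\mathcal{M}\dot{B}^{p,\tau}_{q,r}}$ by testing on near-extremal finite block sums, followed by monotone exhaustion, is also standard. One small technical point you should not skip: the block-space definition only asserts $f=\sum_{\nu,m}\lambda_{\nu,m}b_{\nu,m}$ almost everywhere, so before interchanging the integral with the series in $(i)_1$ you need an absolute-convergence statement (e.g.\ first bound $\int\sum_{\nu,m}|\lambda_{\nu,m}||b_{\nu,m}||g|$ by the same H\"older chain, then invoke Fubini). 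For (ii), your closed-subspace embedding of $\mathcal{M}\dot{B}^{p,\tau}_{q,r}$ into $\ell^{\tau}_\nu(\ell^{r}_m(L^q(Q_{\nu,m})))$ plus Hahn--Banach and the quotient identification is a clean way to get the reverse duality once all exponents lie in $(1,\infty)$; the verification that the quotient norm on $E^*/\mathcal{S}^\perp$ agrees with the block-space infimum is the only place real work remains, and your sketch (one inequality from $(i)_1$, the other from taking an explicit representative) is the right shape.
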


Next, we prove that the Besov--Bourgain--Morrey space
is a ball Banach function space.

\begin{lemma}\label{BBFS}
If $1\leq q<p<r\leq\infty$ and $\tau\in[1,\infty)$
or if $1\leq q\leq p\leq r\leq\tau=\infty$,
then $\mathcal{M}\dot{B}^{p,\tau}_{q,r}(\mathbb{R}^n)$
is a ball Banach function space.
\end{lemma}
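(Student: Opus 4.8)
The plan is to verify directly that $\mathcal{M}\dot{B}^{p,\tau}_{q,r}(\mathbb{R}^n)$ satisfies the six conditions (i)--(vi) of Definition~\ref{1659}. Conditions (i), (ii), and (v) are essentially immediate: the norm is built from the $L^q$-norm on dyadic cubes via iterated $\ell^r$- and $\ell^\tau$-type aggregations (with the weights $|Q_{\nu,m}|^{1/p-1/q}$), so it inherits the lattice property $|g|\le|f|\Rightarrow\|g\|\le\|f\|$ from $L^q$ and $\ell^r,\ell^\tau$, and it is a genuine norm (rather than merely a quasi-norm) precisely because $q,r,\tau\ge 1$; the triangle inequality follows by applying Minkowski's inequality successively in $L^q$, then in $\ell^r$, then in $\ell^\tau$ (in the case $\tau=\infty$ one uses the supremum instead, and in the case $r=p=\infty$ one uses that the weight exponent $\frac1p-\frac1q$ collapses appropriately). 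For (i), if $\|f\|_{\mathcal{M}\dot{B}^{p,\tau}_{q,r}(\mathbb{R}^n)}=0$ then $\int_{Q_{\nu,m}}|f|^q=0$ for every dyadic cube $Q_{\nu,m}$, and since these cubes cover $\mathbb{R}^n$ we get $f=0$ almost everywhere.

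Next I would handle condition (iii), the Fatou-type property. Suppose $0\le f_m\uparrow f$ almost everywhere. For each fixed dyadic cube $Q_{\nu,m}$, the monotone convergence theorem gives $\int_{Q_{\nu,m}}|f_m|^q\,dy\uparrow\int_{Q_{\nu,m}}|f|^q\,dy$; then monotone convergence for the (possibly infinite) sums over $m\in\mathbb{Z}^n$ and $\nu\in\mathbb{Z}$ — together with the fact that $t\mapsto t^{r/q}$, $t\mapsto t^{\tau/r}$, and $t\mapsto t^{1/\tau}$ are continuous and nondecreditsing on $[0,\infty]$ — upgrades this to $\|f_m\|_{\mathcal{M}\dot{B}^{p,\tau}_{q,r}(\mathbb{R}^n)}\uparrow\|f\|_{\mathcal{M}\dot{B}^{p,\tau}_{q,r}(\mathbb{R}^n)}$, with the convention that both sides may be $+\infty$. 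The only subtlety is the bookkeeping when one or more of $q,r,\tau$ equals $\infty$, where "sum" is replaced by "supremum"; monotone convergence still applies since a supremum of an increasing family of increasing functions is the supremum of the limits.

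For condition (iv), I must show $\mathbf{1}_B\in\mathcal{M}\dot{B}^{p,\tau}_{q,r}(\mathbb{R}^n)$ for every ball $B\subset\mathbb{R}^n$; equivalently, by Remark~\ref{1052}(ii), for $\mathbf{1}_Q$ with $Q$ a fixed cube. Only the dyadic cubes $Q_{\nu,m}$ meeting $Q$ contribute: those with side length $\lesssim$ that of $Q$ are finitely many at each of the finitely many relevant scales $2^\nu\lesssim\ell(Q)$, contributing a finite sum; those with side length $\gtrsim\ell(Q)$ contain $Q$ and contribute a term comparable to $|Q_{\nu,m}|^{1/p-1/q}|Q|^{1/q}=|Q_{\nu,m}|^{1/p}\,(|Q|/|Q_{\nu,m}|)^{1/q}$, and since $p<r\le\infty$ the resulting geometric-type series over the finitely many (per scale) such cubes and over scales $2^\nu\ge\ell(Q)$ converges — this is exactly where the strict inequality $q<p$ (or the hypothesis $p=r=\tau=\infty$) is used to get summability in $\nu$. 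Hence $\|\mathbf{1}_Q\|_{\mathcal{M}\dot{B}^{p,\tau}_{q,r}(\mathbb{R}^n)}<\infty$.

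Finally, for condition (vi) — local integrability control — I would invoke duality via Lemma~\ref{duals}: since the block space $\mathcal{B}^{p',\tau'}_{q',r'}(\mathbb{R}^n)$ is (up to equivalence of norms) the predual of $\mathcal{M}\dot{B}^{p,\tau}_{q,r}(\mathbb{R}^n)$, it suffices to show $\mathbf{1}_B\in\mathcal{B}^{p',\tau'}_{q',r'}(\mathbb{R}^n)$: then for any $f\in\mathcal{M}\dot{B}^{p,\tau}_{q,r}(\mathbb{R}^n)$ we have $\int_B|f|\,dx=|\mathcal{J}_{f\,\mathrm{sgn}}(\mathbf{1}_B)|\le\|\mathbf{1}_B\|_{\mathcal{B}^{p',\tau'}_{q',r'}(\mathbb{R}^n)}\,\|f\|_{\mathcal{M}\dot{B}^{p,\tau}_{q,r}(\mathbb{R}^n)}$, which is the required estimate with $C_{(B)}=\|\mathbf{1}_B\|_{\mathcal{B}^{p',\tau'}_{q',r'}(\mathbb{R}^n)}$. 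That $\mathbf{1}_B$ (hence $\mathbf{1}_Q$ for a suitable dyadic cube $Q\supset B$ via Lemma~\ref{2115}) lies in the block space is clear: a single normalized multiple of $\mathbf{1}_Q$ is itself a $(p',q')$-block supported in $Q$, so $\mathbf{1}_Q=|Q|^{1/p'}\cdot\big(|Q|^{-1/p'}\mathbf{1}_Q\big)$ is an admissible one-term block decomposition. Alternatively — and this avoids any borderline issues with the ranges of exponents in Lemma~\ref{duals} — one can prove (vi) directly: restricting the defining sum to the single dyadic cube containing $B$ and using Hölder's inequality $\int_B|f|\le|B|^{1/q'}\big(\int_B|f|^q\big)^{1/q}$ shows $\int_B|f|\le C_{(B)}\|f\|_{\mathcal{M}\dot{B}^{p,\tau}_{q,r}(\mathbb{R}^n)}$.

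The main obstacle I anticipate is purely a matter of careful casework rather than deep ideas: the statement bundles together the regime $1\le q<p<r\le\infty$, $\tau\in[1,\infty)$ with the degenerate regime $q\le p\le r=\tau=\infty$, and in the latter the aggregations become suprema and several exponents coincide, so the summability argument underlying (iv) (and the direct proof of (vi)) must be re-examined; in that degenerate case $\|f\|=\sup_{\nu,m}|Q_{\nu,m}|^{1/p-1/q}\|f\|_{L^q(Q_{\nu,m})}$ is a weighted analogue of a Morrey norm and $\mathbf{1}_Q$ lies in it precisely because $q\le p$ forces the weight exponent to be nonpositive. Keeping the two regimes notationally parallel while ensuring every step (especially the geometric series in $\nu$) is valid in both is the real work.
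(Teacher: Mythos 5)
Your proposal takes essentially the same route as the paper: verify Definition~\ref{1659}(i)--(vi) directly, using the triangle inequalities of $L^q$, $\ell^r$, $\ell^\tau$ for (v), the covering property of dyadic cubes for (i), lattice monotonicity for (ii), monotone convergence for (iii), and a ``dyadic cube first, then ball'' argument for (iv). Your direct H\"older estimate for (vi) is also the paper's argument, and it is the one that covers the full stated parameter range; your duality-based alternative relies on Lemma~\ref{duals}, which requires $1<q<p<r\le\infty$ and $\tau\in(1,\infty)$ or $\tau=\infty$, so it cannot reach $q=1$ or $\tau=1$, which the statement permits. The one real gap is in your direct argument for (vi): you ``restrict the defining sum to the single dyadic cube containing $B$,'' but a general ball $B$ need not lie inside any single cube of the fixed dyadic system $\mathcal{D}$ at a comparable scale (take $B$ centered at the origin in the standard grid). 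The paper closes this with Lemma~\ref{2115}---some adjacent system $\mathcal{D}^\alpha$ contains a cube $Q\supset B$ with $|Q|\sim|B|$---and then crucially invokes the fact, from \cite[Theorem~2.9]{zstyy2022}, that $\mathcal{M}\dot{B}^{p,\tau}_{q,r}(\mathbb{R}^n)$ is independent of the choice of dyadic system, so that the single term $|Q|^{1/p-1/q}\|f\|_{L^q(Q)}$ coming from $\mathcal{D}^\alpha$ is still controlled by $\|f\|_{\mathcal{M}\dot{B}^{p,\tau}_{q,r}(\mathbb{R}^n)}$. You cite Lemma~\ref{2115} only in the duality variant and nowhere record the system-independence, without which the direct estimate does not close. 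A smaller imprecision in your sketch of (iv): the scales $2^\nu\lesssim\ell(Q)$ are infinitely many, not finitely many, and it is $p<r$ that makes the small-cube contribution summable as $\nu\to-\infty$, while $q<p$ kills the large-cube tail as $\nu\to+\infty$; your wording conflates the two roles. The paper itself sidesteps (iv) entirely by citing \cite[Proposition~2.15]{zstyy2022}.
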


\begin{proof}
We only consider the case that $r,\tau\in[1,\infty)$
because the proofs of the cases that
$r=\infty$ or $\tau=\infty$ are similar and hence we omit the details.
By the triangle inequalities of $L^q(\mathbb{R}^n)$,
$\ell^r$, and $\ell^\tau$,
we find that $\|\cdot\|_{\mathcal{M}
\dot{B}^{p,\tau}_{q,r}(\mathbb{R}^n)}$ satisfies
Definition~\ref{1659}(v).
From the definition of $\mathcal{M}\dot{B}^{p,\tau}_{q,r}(\mathbb{R}^n)$,
we infer that, if $f\in\mathscr{M}(\mathbb{R}^n)$
and $\|f\|_{\mathcal{M}\dot{B}^{p,\tau}_{q,r}(\mathbb{R}^n)}=0$,
then, for any $\nu\in\mathbb{Z}$, $m\in\mathbb{Z}^n$,
and almost every $x\in Q_{\nu,m}$, $f(x)=0$,
which, combined with the fact that
$\bigcup_{m\in\mathbb{Z}^n}Q_{\nu,m}=\mathbb{R}^n$
for any $\nu\in\mathbb{Z}$,
further implies that $f=0$ almost everywhere in $\mathbb{R}^n$.
Thus, $\|\cdot\|_{\mathcal{M}\dot{B}^{p,\tau}_{q,r}(\mathbb{R}^n)}$ satisfies
Definition~\ref{1659}(i).
By the definition of $\|\cdot\|_{\mathcal{M}
\dot{B}^{p,\tau}_{q,r}(\mathbb{R}^n)}$,
we are easy to show that $\|\cdot\|_{\mathcal{M}
\dot{B}^{p,\tau}_{q,r}(\mathbb{R}^n)}$
satisfies Definition~\ref{1659}(ii).
From the monotone convergence theorem,
it is easy to deduce that $\mathcal{M}\dot{B}^{p,\tau}_{q,r}(\mathbb{R}^n)$
satisfies Definition~\ref{1659}(iii).
This, together with both Definition~\ref{1659}(ii)
and Remark~\ref{1052}(iv), further implies that
$\mathcal{M}\dot{B}^{p,\tau}_{q,r}(\mathbb{R}^n)$ is complete.
Thus, $\mathcal{M}\dot{B}^{p,\tau}_{q,r}(\mathbb{R}^n)$ is a Banach space.
By the proof of \cite[Proposition~2.15]{zstyy2022},
we find that the characteristic function
of any dyadic cube belongs to $\mathcal{M}
\dot{B}^{p,\tau}_{q,r}(\mathbb{R}^n)$.
This, combined with both (ii) and (v) of Definition~\ref{1659},
further implies that, for any ball $B$, $\mathbf{1}_B\in
\mathcal{M}\dot{B}^{p,\tau}_{q,r}(\mathbb{R}^n)$.
Thus, $\mathcal{M}\dot{B}^{p,\tau}_{q,r}(\mathbb{R}^n)$ satisfies
Definition~\ref{1659}(iv).
In addition, from Lemma~\ref{2115},
we infer that,
for any ball $B\subset\mathbb{R}^n$, there exist
$\alpha\in\{0,\frac{1}{3},\frac{2}{3}\}^n$
and $Q\in\mathcal{D}^\alpha$
such that $B\subset Q$ and $|B|\sim|Q|$,
which, together with the H\"older inequality,
further implies that, for any $f\in\mathcal{M}
\dot{B}^{p,\tau}_{q,r}(\mathbb{R}^n)$,
\begin{align*}
\int_B|f(x)|\,dx&\leq\int_Q|f(x)|\,dx
\leq|Q|^\frac{1}{q'}\|f\|_{L^q(Q)}
=|Q|^{\frac{1}{p'}}|Q|^{\frac{1}{p}-\frac{1}{q}}\|f\|_{L^q(Q)}\\
&\leq|Q|^{\frac{1}{p'}}
\left\{\sum_{\nu\in\mathbb{Z}}
\left[\sum_{m\in\mathbb{Z}^n}\left\{\left|Q_{\nu,m}^{(\alpha)}
\right|^{\frac{1}{p}-\frac{1}{q}}
\left[\int_{Q_{\nu,m}^{(\alpha)}}|f(y)|^q\,dy\right]^\frac{1}{q}\right\}^r
\right]^\frac{\tau}{r}\right\}^\frac{1}{\tau}\\
&\sim|B|^{\frac{1}{p'}}
\left\{\sum_{\nu\in\mathbb{Z}}
\left[\sum_{m\in\mathbb{Z}^n}\left\{\left|Q_{\nu,m}^{(\alpha)}
\right|^{\frac{1}{p}-\frac{1}{q}}
\left[\int_{Q_{\nu,m}^{(\alpha)}}|f(y)|^q\,dy\right]^\frac{1}{q}\right\}^r
\right]^\frac{\tau}{r}\right\}^\frac{1}{\tau}\\
&\sim|B|^{\frac{1}{p'}}\|f\|_{\mathcal{M}
\dot{B}^{p,\tau}_{q,r}(\mathbb{R}^n)},
\end{align*}
where, in the last step, we used the fact that the space
$\mathcal{M}\dot{B}^{p,\tau}_{q,r}(\mathbb{R}^n)$ is independent of
the choice of the system of dyadic cubes
which can be deduced from \cite[Theorem~2.9]{zstyy2022}.
Thus, $\mathcal{M}\dot{B}^{p,\tau}_{q,r}(\mathbb{R}^n)$ satisfies
Definition~\ref{1659}(vi).
This finishes the proof of Lemma~\ref{BBFS}.
\end{proof}

Now,
we establish the boundedness of the Hardy--Littlewood
maximal operator on block spaces
in Definition~\ref{2209}(iii).

\begin{proposition}\label{2243}
If $1<q<p<r<\infty$ and $\tau\in(1,\infty)$,
then the Hardy--Littlewood maximal operator $\mathcal{M}$
is bounded on the block space $\mathcal{B}^{p',\tau'}_{q',r'}(\mathbb{R}^n)$.
\end{proposition}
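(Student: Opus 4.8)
\medskip

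The plan is to exploit the atomic structure of the block space. Fix $f\in\mathcal{B}^{p',\tau'}_{q',r'}(\mathbb{R}^n)$ and, for $\varepsilon>0$, choose a decomposition $f=\sum_{\nu\in\mathbb{Z}}\sum_{m\in\mathbb{Z}^n}\lambda_{\nu,m}b_{\nu,m}$ into $(p',q')$-blocks, with $b_{\nu,m}$ supported in $Q_{\nu,m}$ and $\|\{\lambda_{\nu,m}\}_{\nu,m}\|_{\ell_\nu^{\tau'}(\ell_m^{r'})}\le(1+\varepsilon)\|f\|_{\mathcal{B}^{p',\tau'}_{q',r'}(\mathbb{R}^n)}$. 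By the sublinearity of $\mathcal{M}$ together with the monotone convergence theorem, $\mathcal{M}(f)\le\sum_{\nu,m}|\lambda_{\nu,m}|\,\mathcal{M}(b_{\nu,m})$ almost everywhere in $\mathbb{R}^n$. Since $\mathcal{B}^{p',\tau'}_{q',r'}(\mathbb{R}^n)$ is a ball Banach function space (a fact that follows from Lemmas~\ref{duals} and~\ref{BBFS}) and, by the very definition of its norm, any representation as a sum of blocks bounds the norm by the $\ell_\nu^{\tau'}(\ell_m^{r'})$-norm of the coefficient sequence, it suffices to rewrite the right-hand side as a convergent sum of multiples of $(p',q')$-blocks whose coefficient sequence has $\ell_\mu^{\tau'}(\ell_k^{r'})$-norm $\lesssim\|f\|_{\mathcal{B}^{p',\tau'}_{q',r'}(\mathbb{R}^n)}$, and then to let $\varepsilon\to0^+$.

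The first step would be a single-block estimate. Let $b$ be a $(p',q')$-block supported in a dyadic cube $Q$ of side length $2^{\nu}$ with center $c_Q$. Because $q'\in(1,\infty)$, the classical $L^{q'}(\mathbb{R}^n)$-boundedness of $\mathcal{M}$ gives $\|\mathcal{M}(b)\mathbf{1}_{B(c_Q,4\cdot2^\nu)}\|_{L^{q'}(\mathbb{R}^n)}\lesssim\|b\|_{L^{q'}(\mathbb{R}^n)}\le|Q|^{\frac1{q'}-\frac1{p'}}$; and for any $N\in\mathbb{N}$ and any $x$ with $2^{N+1}\cdot2^\nu<|x-c_Q|\le2^{N+2}\cdot2^\nu$, every ball containing $x$ and meeting $Q$ has measure $\gtrsim2^{Nn}|Q|$, so, using $\int_Q|b|\le|Q|^{1/q}\|b\|_{L^{q'}(\mathbb{R}^n)}\le|Q|^{1-1/p'}$, one obtains $\mathcal{M}(b)(x)\lesssim2^{-Nn}|Q|^{-1/p'}$. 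Covering $B(c_Q,4\cdot2^\nu)$ by at most $2^n$ dyadic cubes of side length $2^{\nu+3}$ and each dyadic annulus $B(c_Q,2^{N+2}\cdot2^\nu)\setminus B(c_Q,2^{N+1}\cdot2^\nu)$ by at most $2^n$ dyadic cubes of side length $2^{\nu+N+3}$, and then restricting $\mathcal{M}(b)$ to each of these cubes, one checks that $\mathcal{M}(b)\lesssim\sum_{N=0}^{\infty}2^{-Nn/p}\sum_{Q'\in\mathcal{Q}_N(Q)}\beta_{Q,N,Q'}$, where each $\mathcal{Q}_N(Q)$ is a family of at most $2^n$ dyadic cubes $Q'$ of side length $2^{\nu+N+3}$ contained in $B(c_Q,2^{N+2}\cdot2^\nu)$ and each $\beta_{Q,N,Q'}$ is a $(p',q')$-block supported in $Q'$; the exponent $2^{-Nn/p}$ arises from comparing $2^{-Nn/q}|Q|^{1/q'-1/p'}$ with $|Q'|^{1/q'-1/p'}=(2^{\nu+N+3})^{n(1/q'-1/p')}$ and using $\tfrac1q+\tfrac1{q'}-\tfrac1{p'}=\tfrac1{p}$.

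Then I would regroup. Summing the single-block estimate against the weights $|\lambda_{\nu,m}|$ and collecting all the pieces supported in a fixed dyadic cube $Q_{\mu,k}$ (absorbing several blocks supported in the same cube into one block while adding the corresponding coefficients), we arrive at $\mathcal{M}(f)\lesssim\sum_{\mu\in\mathbb{Z}}\sum_{k\in\mathbb{Z}^n}\Lambda_{\mu,k}\beta_{\mu,k}$, where $\beta_{\mu,k}$ is a $(p',q')$-block supported in $Q_{\mu,k}$ and $\Lambda_{\mu,k}\lesssim\sum_{N\ge0}2^{-Nn/p}\sum_{m\in I_{N,\mu,k}}|\lambda_{\mu-N-3,m}|$, with $\#I_{N,\mu,k}\lesssim2^{Nn}$ while, dually, each index $m$ lies in at most $2^n$ of the sets $\{I_{N,\mu,k}\}_{k}$. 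Hence the (positive) Schur test — row sums $\lesssim2^{Nn}$, column sums $\lesssim1$ — gives, for each fixed $\mu$ and $N$, $\|\{\sum_{m\in I_{N,\mu,k}}|a_m|\}_k\|_{\ell^{r'}(\mathbb{Z}^n)}\lesssim2^{Nn/r}\|\{a_m\}_m\|_{\ell^{r'}(\mathbb{Z}^n)}$, so $\|\{\Lambda_{\mu,k}\}_k\|_{\ell^{r'}}\lesssim\sum_{N\ge0}2^{-Nn(1/p-1/r)}\|\{\lambda_{\mu-N-3,m}\}_m\|_{\ell^{r'}}$. Since $p<r$, the factor $2^{-Nn(1/p-1/r)}$ is summable, and the triangle inequality in $\ell_\mu^{\tau'}(\mathbb{Z})$ (valid because $\tau'\in(1,\infty)$) together with the translation invariance of $\ell_\mu^{\tau'}$ yields $\|\{\Lambda_{\mu,k}\}_{\mu,k}\|_{\ell_\mu^{\tau'}(\ell_k^{r'})}\lesssim\|\{\lambda_{\nu,m}\}_{\nu,m}\|_{\ell_\nu^{\tau'}(\ell_m^{r'})}\le(1+\varepsilon)\|f\|_{\mathcal{B}^{p',\tau'}_{q',r'}(\mathbb{R}^n)}$, which is exactly what is needed.

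I expect the regrouping to be the main obstacle. The portion of $\mathcal{M}(b)$ sitting in the $N$-th annulus around $Q$ is spatially spread out, hence can be realized only as (multiples of) blocks supported in cubes roughly $2^N$ times larger than $Q$; consequently a single such large cube receives contributions from $\sim2^{Nn}$ distinct original blocks $b_{\nu,m}$. The argument closes only because this $2^{Nn}$-fold overlap in the position variable is more than compensated, after the $\ell^{r'}$-summation via the Schur test, by the decay $2^{-Nn/p}$ of the block coefficients, leaving the residual factor $2^{-Nn(1/p-1/r)}$, which is summable precisely under the standing hypothesis $p<r$; the $\ell^{\tau'}$ in the generation variable then comes along for free by translation invariance. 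One must also take care to cover each annulus by only boundedly many dyadic cubes, so that the dual overlap entering the Schur test stays bounded: covering instead by cubes of the original small side length would inflate it to $2^{Nn}$ and ruin the estimate.
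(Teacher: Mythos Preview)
Your argument is correct, but it follows a genuinely different route from the paper's.

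The paper's proof is short and indirect: it picks $s\in(1,\min\{q',p',r',\tau'\})$, identifies (via Lemma~\ref{duals}) the associate space $\bigl([\mathcal{B}^{p',\tau'}_{q',r'}(\mathbb{R}^n)]^{1/s}\bigr)'$ with a Besov--Bourgain--Morrey space, invokes the already known boundedness of $\mathcal{M}$ on that space (from \cite[Corollary~4.7]{zstyy2022}), and then applies Lemma~\ref{2005}, which transfers boundedness of $\mathcal{M}$ from the associate of a convexification back to the original space by an extrapolation argument. No block arithmetic is ever performed.

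Your approach is the direct atomic one: estimate $\mathcal{M}(b)$ for a single block by a local piece plus geometrically decaying annular pieces, realize each piece as a multiple of a block on a larger dyadic cube, and control the resulting coefficient sequence by a Schur test in $m$ followed by Young's convolution inequality in $\nu$. This is longer but entirely self-contained, and it makes the role of the hypothesis $p<r$ completely transparent---it is precisely what makes $\sum_{N\ge0}2^{-Nn(1/p-1/r)}$ converge. The paper's route is slicker but leans on two external black boxes (the boundedness of $\mathcal{M}$ on $\mathcal{M}\dot{B}^{p,\tau}_{q,r}$ and the extrapolation Lemma~\ref{2005}); your route trades brevity for explicitness. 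One small point worth noting: you invoke the lattice property of $\mathcal{B}^{p',\tau'}_{q',r'}(\mathbb{R}^n)$ to pass from $\mathcal{M}(f)\le\sum\Lambda_{\mu,k}\beta_{\mu,k}$ to a norm inequality, and this does require knowing the block space is a ball Banach function space---which you correctly source to Lemmas~\ref{duals} and~\ref{BBFS} together with Remark~\ref{dual}(i), exactly as the paper does.
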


\begin{proof}
Since $1<q<p<r<\infty$ and $\tau\in(1,\infty)$,
it follows that
we can choose one
$s\in(1,\min\{q',\,p',\,r',\,\tau'\})$.
Using Lemma~\ref{duals}(ii),
we obtain
\begin{align*}
\left[\mathcal{M}\dot{B}^{p,\tau}_{q,r}(\mathbb{R}^n)\right]'
=\mathcal{B}^{p',\tau'}_{q',r'}(\mathbb{R}^n)
\ \text{and}\
\left[\mathcal{M}\dot{B}^{(\frac{p'}{s})',(\frac{\tau'}{s})'}_{
(\frac{q'}{s})',(\frac{r'}{s})'}(\mathbb{R}^n)\right]'
=\mathcal{B}^{\frac{p'}{s},\frac{\tau'}{s}
}_{\frac{q'}{s},\frac{r'}{s}}(\mathbb{R}^n),
\end{align*}
which, together with both Remark~\ref{dual}(i)
and Lemma~\ref{BBFS},
further implies that both
$\mathcal{B}^{p',\tau'}_{q',r'}(\mathbb{R}^n)$ and
$[\mathcal{B}^{p',\tau'}_{q',r'}
(\mathbb{R}^n)]^\frac{1}{s}$
are ball Banach function spaces.
Moreover,
by Lemma~\ref{duals}(i),
we find that
\begin{align*}
\left(\left[\mathcal{B}^{p',\tau'}_{q',r'}
(\mathbb{R}^n)\right]^\frac{1}{s}\right)'=
\mathcal{M}\dot{B}^{(\frac{p'}{s})',
(\frac{\tau'}{s})'}_{(\frac{q'}{s})',(\frac{r'}{s})'}(\mathbb{R}^n),
\end{align*}
which, combined with \cite[Corollary~4.7]{zstyy2022},
further implies that $\mathcal{M}$ is bounded on
$([\mathcal{B}^{p',\tau'}_{q',r'}
(\mathbb{R}^n)]^\frac{1}{s})'$.
From these and Lemma~\ref{2005} with
both $X:=\mathcal{B}^{p',\tau'}_{q',r'}(\mathbb{R}^n)$
and $p:=s$,
we deduce that $\mathcal{M}$
is bounded on $\mathcal{B}^{p',\tau'}_{q',r'}(\mathbb{R}^n)$,
which completes the proof of Proposition~\ref{2243}.
\end{proof}

Using Theorems~\ref{1931} and~\ref{1039}
and Proposition~\ref{2243}, we obtain the following conclusion.

\begin{theorem}\label{BBM}
If $\gamma\in\mathbb{R}\setminus\{0\}$,
$1<q<p<r<\infty$, $\tau\in(1,\infty)$,
and $s\in[1,\min\{q,\,p,\,r,\,\tau\})$,
then $f\in\dot{W}^{1,\mathcal{M}\dot{B}^{p,\tau}_{q,r}}(\Omega)$
if and only if
$f\in L_{{\mathrm{loc}}}^1(\Omega)$ and
$$
\sup_{\lambda\in(0,\infty)}\lambda
\left\|\left[\int_{\Omega}
\mathbf{1}_{E_{\lambda,\frac{\gamma}{s}}[f]}(\cdot,y)
\left|\cdot-y\right|^{\gamma-n}\,dy\right]^\frac{1}{s}
\right\|_{\mathcal{M}\dot{B}^{p,\tau}_{q,r}(\Omega)}<\infty,
$$
where $E_{\lambda,\frac{\gamma}{s}}[f]$
for any $\lambda\in(0,\infty)$
is the same as in \eqref{Elambda};
moreover, for such $f$,
$$
\sup_{\lambda\in(0,\infty)}\lambda
\left\|\left[\int_{\Omega}
\mathbf{1}_{E_{\lambda,\frac{\gamma}{s}}[f]}(\cdot,y)
\left|\cdot-y\right|^{\gamma-n}\,dy\right]^\frac{1}{s}
\right\|_{\mathcal{M}\dot{B}^{p,\tau}_{q,r}(\Omega)}
\sim\left\|\,|\nabla f|\,\right\|_{\mathcal{M}
\dot{B}^{p,\tau}_{q,r}(\Omega)},
$$
where the positive equivalence constants are independent of $f$.
\end{theorem}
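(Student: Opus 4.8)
The plan is to derive Theorem~\ref{BBM} from Theorems~\ref{1931} and~\ref{1039} applied to the ball Banach function space $X(\mathbb{R}^n):=\mathcal{M}\dot{B}^{p,\tau}_{q,r}(\mathbb{R}^n)$, with the index called ``$p$'' in those theorems taken to be $s$ and with $\gamma$ unchanged; since throughout this section $\Omega=\mathbb{R}^n$ or $\Omega$ is an $(\varepsilon,\infty)$-domain, the case $\Omega=\mathbb{R}^n$ is handled by Theorem~\ref{1931} and the other case by Theorem~\ref{1039}. Thus the entire argument reduces to verifying the four hypotheses~(i)--(iv) of Theorem~\ref{1931} for this $X$ and this $s$.

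First I would compute the $s$-convexification: a direct manipulation of the defining quasi-norm in Definition~\ref{2209}(i), using the identity $1/(p/s)-1/(q/s)=s(1/p-1/q)$, shows that $[\mathcal{M}\dot{B}^{p,\tau}_{q,r}(\mathbb{R}^n)]^{1/s}=\mathcal{M}\dot{B}^{p/s,\tau/s}_{q/s,r/s}(\mathbb{R}^n)$ with equal norms. Since $s\in[1,\min\{q,p,r,\tau\})$, the rescaled parameters satisfy $1\le q/s<p/s<r/s<\infty$ and $\tau/s\in[1,\infty)$, so Lemma~\ref{BBFS} (applied both to $(q,p,r,\tau)$ and to $(q/s,p/s,r/s,\tau/s)$) gives that $X(\mathbb{R}^n)$ and $X^{1/s}(\mathbb{R}^n)$ are ball Banach function spaces; this is hypothesis~(i).

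For hypothesis~(ii), Lemma~\ref{duals}(i) and~(ii) together show that $\mathcal{M}\dot{B}^{p,\tau}_{q,r}(\mathbb{R}^n)$ and $\mathcal{B}^{p',\tau'}_{q',r'}(\mathbb{R}^n)$ are each other's topological dual under the integral pairing, whence $X(\mathbb{R}^n)$ is reflexive; Lemma~\ref{reflexive} then yields that both $X(\mathbb{R}^n)$ and $X'(\mathbb{R}^n)=\mathcal{B}^{p',\tau'}_{q',r'}(\mathbb{R}^n)$ have absolutely continuous norms. For hypothesis~(iii), the duality results for Besov--Bourgain--Morrey spaces (Lemma~\ref{duals}, now with $(q,p,r,\tau)$ replaced by $(q/s,p/s,r/s,\tau/s)$) identify $[X^{1/s}(\mathbb{R}^n)]'$ with the block space $\mathcal{B}^{(p/s)',(\tau/s)'}_{(q/s)',(r/s)'}(\mathbb{R}^n)$, and Proposition~\ref{2243}, again with $(q,p,r,\tau)$ replaced by $(q/s,p/s,r/s,\tau/s)$ --- which is legitimate precisely because $s<\min\{q,p,r,\tau\}$ forces $1<q/s<p/s<r/s<\infty$ and $\tau/s\in(1,\infty)$ --- shows that $\mathcal{M}$ is bounded on that block space, that is, on $[X^{1/s}(\mathbb{R}^n)]'$. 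Finally, hypothesis~(iv) is only needed when $s=1$, and there the boundedness of $\mathcal{M}$ on $X(\mathbb{R}^n)=\mathcal{M}\dot{B}^{p,\tau}_{q,r}(\mathbb{R}^n)$ follows from \cite[Corollary~4.7]{zstyy2022} under the standing assumptions $1<q<p<r<\infty$ and $\tau\in(1,\infty)$. With~(i)--(iv) in hand, Theorems~\ref{1931} and~\ref{1039} yield both the claimed characterization of $\dot{W}^{1,\mathcal{M}\dot{B}^{p,\tau}_{q,r}}(\Omega)$ and the asserted norm equivalence.

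The step requiring the most care will be the bookkeeping around the convexification $X^{1/s}$ and its associate space $[X^{1/s}]'$: one must check that the rescaled exponents $q/s,p/s,r/s,\tau/s$ together with their conjugates $(q/s)',\dots,(\tau/s)'$ satisfy the exact numerical ranges demanded by Lemma~\ref{BBFS}, Lemma~\ref{duals}, and Proposition~\ref{2243}, which is precisely where the hypothesis $s\in[1,\min\{q,p,r,\tau\})$ with the strict inequality is genuinely used. Beyond that, the only mildly delicate point is the absolute continuity of the abstractly defined block space $\mathcal{B}^{p',\tau'}_{q',r'}(\mathbb{R}^n)$, which is settled cleanly via reflexivity (Lemma~\ref{reflexive}) rather than by a direct computation.
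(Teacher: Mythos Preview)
Your proposal is correct and follows essentially the same approach as the paper's own proof: verify hypotheses~(i)--(iv) of Theorem~\ref{1931} for $X=\mathcal{M}\dot{B}^{p,\tau}_{q,r}(\mathbb{R}^n)$ with the convexification index taken to be~$s$, using Lemma~\ref{BBFS} for~(i), Lemmas~\ref{duals} and~\ref{reflexive} for~(ii) via reflexivity, Proposition~\ref{2243} (together with the duality identification of $[X^{1/s}]'$ as a block space) for~(iii), and \cite[Corollary~4.7]{zstyy2022} for~(iv), then invoke Theorems~\ref{1931} and~\ref{1039}. Your treatment is in fact slightly more explicit than the paper's in writing down the convexification identity $[\mathcal{M}\dot{B}^{p,\tau}_{q,r}]^{1/s}=\mathcal{M}\dot{B}^{p/s,\tau/s}_{q/s,r/s}$ and in noting that~(iv) is only needed when $s=1$.
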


\begin{proof}
By Lemma~\ref{BBFS},
we find that both $\mathcal{M}\dot{B}^{p,\tau}_{q,r}(\mathbb{R}^n)$
and $[\mathcal{M}\dot{B}^{p,\tau}_{q,r}(\mathbb{R}^n)]^\frac{1}{s}$
are ball Banach function spaces.
From Lemma~\ref{duals},
we infer that $\mathcal{M}\dot{B}^{p,\tau}_{q,r}(\mathbb{R}^n)$
is reflexive,
which together with Lemma~\ref{reflexive},
further implies that both $\mathcal{M}\dot{B}^{p,\tau}_{q,r}(\mathbb{R}^n)$
and $[\mathcal{M}\dot{B}^{p,\tau}_{q,r}(\mathbb{R}^n)]'$
have absolutely continuous norms.
Using \cite[Corollary~4.7]{zstyy2022}, we conclude that
the Hardy--Littlewood maximal operator $\mathcal{M}$
is bounded on $\mathcal{M}\dot{B}^{p,\tau}_{q,r}(\mathbb{R}^n)$.
Moreover, from $s\in[1,\min\{q,\,p,\,r,\,\tau\})$
and Proposition~\ref{2243},
we deduce that $\mathcal{M}$
is bounded on
$$
\left(\left[\mathcal{M}\dot{B}^{p,\tau}_{q,r}
(\mathbb{R}^n)\right]^\frac{1}{s}\right)'
=\mathcal{B}^{(\frac{p}{s})',(\frac{\tau}{s})'}_{
(\frac{q}{s})',(\frac{r}{s})'}(\mathbb{R}^n).
$$
By these and Theorems~\ref{1931} and~\ref{1039},
we then obtain the desired conclusion and hence complete
the proof of Theorem~\ref{BBM}.
\end{proof}

\begin{remark}
\begin{enumerate}
\item[\textup{(i)}]
To the best of our knowledge,
Theorem~\ref{BBM} is completely new.
\item[\textup{(ii)}]
Let $\tau\in[1,\infty)$ and $1=q<p<r<\infty$ or let
$\tau=1\leq q<p<r<\infty$.
In both cases, since the associate space
of $\mathcal{M}\dot{B}^{p,\tau}_{q,r}(\Omega)$ is unknown,
it is still unclear whether or not Theorem~\ref{BBM} when $s=1$
in both cases holds true.
\end{enumerate}
\end{remark}

\subsection{Local and Global Generalized Herz Spaces}
\label{Herz}

We begin with recalling several basic concepts
related to local and global generalized Herz spaces.
A nonnegative function $\omega$ on $\mathbb{R}_+$
is said to be \emph{almost increasing}
(resp. \emph{almost decreasing})
if there exists a constant $C\in[1,\infty)$ such that,
for any $0<t\leq\tau<\infty$ (resp. $0<\tau\leq t<\infty$),
$$
\omega(t)\leq C\omega(\tau).
$$
The \emph{function class} $M(\mathbb{R}_+)$ is defined to be
the set of all the positive functions $\omega$ on $\mathbb{R}_+$
such that, for any $0<\delta<N<\infty$,
$$
0<\inf_{t\in(\delta,N)}\omega(t)
\leq\sup_{t\in(\delta,N)}\omega(t)<\infty
$$
and that there exist four constants
$\alpha_0,\beta_0,\alpha_\infty,
\beta_\infty\in\mathbb{R}$ such that
\begin{enumerate}
\item[(i)]
for any $t\in(0,1]$,
$t^{-\alpha_0}\omega(t)$ is almost increasing
and $t^{-\beta_0}\omega(t)$ is almost decreasing;
\item[(ii)]
for any $t\in[1,\infty)$,
$t^{-\alpha_\infty}\omega(t)$ is almost increasing
and $t^{-\beta_\infty}\omega(t)$ is almost decreasing.
\end{enumerate}
The \emph{Matuszewska--Orlicz indices} $m_0(\omega)$, $M_0(\omega)$,
$m_\infty(\omega)$, and $M_\infty(\omega)$
of a positive function $\omega$ on $\mathbb{R}_+$
are defined, respectively, by setting
\begin{align*}
m_0(\omega):=\sup_{t\in(0,1)}\frac{\ln\,[\limsup\limits_{h\to0^+}
\frac{\omega(ht)}{\omega(h)}]}{\ln t},\
M_0(\omega):=\inf_{t\in(0,1)}\frac{
\ln\,[\liminf\limits_{h\to0^+}
\frac{\omega(ht)}{\omega(h)}]}{\ln t},
\end{align*}
\begin{align*}
m_\infty(\omega):=\sup_{t\in(1,\infty)}\frac{
\ln\,[\liminf\limits_{h\to\infty}
\frac{\omega(ht)}{\omega(h)}]}{\ln t},
\end{align*}
and
\begin{align*}
M_\infty(\omega):=\inf_{t\in(1,\infty)}
\frac{\ln\,[\limsup\limits_{h\to\infty}
\frac{\omega(ht)}{\omega(h)}]}{\ln t}.
\end{align*}
Next, we recall the concepts of both the local generalized Herz space
and the global generalized Herz space,
which can be found in \cite[Definitions~1.2.1
and  1.7.1]{lyh2320}
(see also \cite[Definition~2.2]{rs2020}).
For any $\xi\in\mathbb{R}^n$,
$L_{\mathrm{loc}}^p(\mathbb{R}^n\setminus\{\xi\})$ denotes the set
of all the locally $p$-integrable functions
on $\mathbb{R}^n\setminus\{\xi\}$.
For any $\xi\in\mathbb{R}^n$ and $k\in\mathbb{Z}$,
let
$R_{\xi,k}:=B(\xi,2^k)\setminus B(\xi,2^{k-1})$.

\begin{definition}\label{4.8}
Let $p,q\in(0,\infty]$ and $\omega\in M(\mathbb{R}_+)$.
\begin{enumerate}
\item[(i)]
The \emph{local generalized Herz space}
$\dot{\mathcal{K}}^{p,q}_{\omega,\xi}(\mathbb{R}^n)$,
with a given $\xi\in\mathbb{R}^n$,
is defined to be the set of all the $f\in L_{\mathrm{loc}}^p
(\mathbb{R}^n\setminus\{\xi\})$ having the following finite quasi-norm
\begin{align*}
\|f\|_{\dot{\mathcal{K}}^{p,q}_{\omega,\xi}(\mathbb{R}^n)}
:=\left\{\sum_{k\in\mathbb{Z}}
\left[\omega(2^k)\right]^q
\left\|f\right\|_{L^p(R_{\xi,k})}^q
\right\}^\frac{1}{q}.
\end{align*}
\item[(ii)]
The \emph{global generalized Herz space}
$\dot{\mathcal{K}}^{p,q}_{\omega}(\mathbb{R}^n)$
is defined to be the set of all the $f\in L_{\mathrm{loc}}^p
(\mathbb{R}^n)$ having the following finite quasi-norm
\begin{align*}
\|f\|_{\dot{\mathcal{K}}^{p,q}_{\omega}(\mathbb{R}^n)}
:=\sup_{\xi\in\mathbb{R}^n}
\|f\|_{\dot{\mathcal{K}}^{p,q}_{\omega,\xi}(\mathbb{R}^n)}.
\end{align*}
\end{enumerate}
\end{definition}
Recall that the classical Herz space was originally
introduced by Herz \cite{herz} to
study the Bernstein theorem on absolutely
convergent Fourier transforms.
The local generalized Herz space
and the global generalized Herz space
in Definition~\ref{4.8}, introduced by
Rafeiro and Samko \cite{rs2020},
are the generalization of classical homogeneous
Herz spaces and connect with generalized
Morrey type spaces.
We refer the reader to \cite{gly1998,hy1999,hwyy2023,ly1996,
lyh2320,rs2020,zyz2022}
for more studies on Herz spaces.
As was pointed out in \cite[Theorem~1.2.46]{lyh2320},
for any $\xi\in\mathbb{R}^n$,
when $p,q\in[1,\infty]$ and $\omega\in M(\mathbb{R}_+)$
satisfy $-\frac{n}{p}<m_0(\omega)\leq M_0(\omega)<n-\frac{n}{p}$,
the local generalized Herz space
$\dot{\mathcal{K}}^{p,q}_{\omega,\xi}(\mathbb{R}^n)$
is a ball Banach function space.
Moreover, as was pointed out in \cite[Theorem~1.2.48]{lyh2320},
when $p,q\in[1,\infty]$ and
$\omega\in M(\mathbb{R}_+)$ satisfy both
$m_0(\omega)\in(-\frac{n}{p},\infty)$
and $M_\infty(\omega)\in(-\infty,0)$,
the global generalized Herz space
$\dot{\mathcal{K}}^{p,q}_{\omega}(\mathbb{R}^n)$ is
a ball Banach function space.

Using Theorems~\ref{1931} and~\ref{1039},
we obtain the following conclusion.

\begin{theorem}\label{localHerz}
Let $\gamma\in\mathbb{R}\setminus\{0\}$, $\xi\in\mathbb{R}^n$,
$p,q\in(1,\infty)$, $s\in[1,\min\{p,\,q\}]$, and
$\omega\in M(\mathbb{R}_+)$
satisfy
\begin{align}\label{858}
-\frac{n}{p}<m_0(\omega)\leq
M_0(\omega)<n\left(\frac{1}{s}-\frac{1}{p}\right)
\end{align}
and
\begin{align}\label{859}
-\frac{n}{p}<m_\infty(\omega)\leq M_\infty(\omega)
<n\left(\frac{1}{s}-\frac{1}{p}\right).
\end{align}
Then $f\in\dot{W}^{1,\dot{\mathcal{K}}^{p,q}_{\omega,
\xi}}(\Omega)$ if and only if
$f\in L_{{\mathrm{loc}}}^1(\Omega)$ and
$$
\sup_{\lambda\in(0,\infty)}\lambda
\left\|\left[\int_{\Omega}
\mathbf{1}_{E_{\lambda,\frac{\gamma}{p}}[f]}(\cdot,y)
\left|\cdot-y\right|^{\gamma-n}\,dy\right]^\frac{1}{p}
\right\|_{\dot{\mathcal{K}}^{p,q}_{\omega,\xi}
(\Omega)}<\infty,
$$
where $E_{\lambda,\frac{\gamma}{p}}[f]$
for any $\lambda\in(0,\infty)$
is the same as in \eqref{Elambda};
moreover, for such $f$,
\begin{align}\label{852}
\sup_{\lambda\in(0,\infty)}\lambda
\left\|\left[\int_{\Omega}
\mathbf{1}_{E_{\lambda,\frac{\gamma}{p}}[f]}(\cdot,y)
\left|\cdot-y\right|^{\gamma-n}\,dy\right]^\frac{1}{p}
\right\|_{\dot{\mathcal{K}}^{p,q}_{\omega,\xi}(\Omega)}
\sim\left\|\,|\nabla f|\,\right\|_{\dot{\mathcal{K}}^{
p,q}_{\omega,\xi}(\Omega)},
\end{align}
where the positive equivalence constants are independent of both $f$
and $\xi$.
\end{theorem}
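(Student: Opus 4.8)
The plan is to obtain Theorem~\ref{localHerz} as a direct consequence of Theorems~\ref{1931} and~\ref{1039}, applied with $X(\mathbb{R}^n):=\dot{\mathcal{K}}^{p,q}_{\omega,\xi}(\mathbb{R}^n)$ and with the exponent $s$ playing the role of the parameter denoted ``$p$'' in those two theorems. Thus the proof reduces to verifying the four structural hypotheses (i)--(iv) of Theorem~\ref{1931} for this space; once this is done, Theorem~\ref{1931} handles the case $\Omega=\mathbb{R}^n$ and Theorem~\ref{1039} the case when $\Omega$ is an $(\varepsilon,\infty)$-domain, which together yield both the stated characterization and the equivalence \eqref{852}. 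Since $\dot{\mathcal{K}}^{p,q}_{\omega,\xi}(\mathbb{R}^n)$ is, up to the translation carrying $\xi$ to the origin, independent of $\xi$, and since both the functional in \eqref{852} and $\|\,|\nabla f|\,\|$ transform covariantly under translations, it suffices to treat $\xi=\mathbf{0}$; tracking that every constant produced below depends only on $n$, $p$, $q$, $s$, $\gamma$, and $\omega$ then gives the asserted independence of the equivalence constants of $\xi$.

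First I would identify the $\tfrac1s$-convexification and its associate space. Unravelling Definition~\ref{tuhua} together with Definition~\ref{4.8}(i) shows that $[\dot{\mathcal{K}}^{p,q}_{\omega,\xi}(\mathbb{R}^n)]^{1/s}=\dot{\mathcal{K}}^{p/s,q/s}_{\omega^{s},\xi}(\mathbb{R}^n)$ with equal quasi-norms, and the known formula for the associate space of local generalized Herz spaces (see \cite{lyh2320}) gives $([\dot{\mathcal{K}}^{p,q}_{\omega,\xi}(\mathbb{R}^n)]^{1/s})'=\dot{\mathcal{K}}^{(p/s)',(q/s)'}_{\omega^{-s},\xi}(\mathbb{R}^n)$. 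Since $s\in[1,\min\{p,q\}]$, one has $p/s,q/s\in[1,\infty)$, and, using the elementary scaling identities $m_0(\omega^{s})=s\,m_0(\omega)$, $M_0(\omega^{s})=s\,M_0(\omega)$, $m_\infty(\omega^{s})=s\,m_\infty(\omega)$, $M_\infty(\omega^{s})=s\,M_\infty(\omega)$ (together with the flip $m_0(\omega^{-1})=-M_0(\omega)$, $M_0(\omega^{-1})=-m_0(\omega)$ and its $\infty$-analogue), one checks that \eqref{858} is exactly the condition $-\tfrac{n}{p/s}<m_0(\omega^{s})\le M_0(\omega^{s})<n-\tfrac{n}{p/s}$ and \eqref{859} its $\infty$-counterpart. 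Feeding the $0$-index version into \cite[Theorem~1.2.46]{lyh2320} yields hypothesis~(i); since $s\ge1$, the same index windows are a fortiori satisfied by $\omega$ relative to the exponent $p$, so $X(\mathbb{R}^n)=\dot{\mathcal{K}}^{p,q}_{\omega,\xi}(\mathbb{R}^n)$ is itself a ball Banach function space.

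Next I would verify (ii)--(iv). For (ii), under \eqref{858} and \eqref{859} the absolute continuity of the norms of $\dot{\mathcal{K}}^{p,q}_{\omega,\xi}(\mathbb{R}^n)$ and of its associate space $\dot{\mathcal{K}}^{p',q'}_{\omega^{-1},\xi}(\mathbb{R}^n)$ follows from the corresponding results in \cite{lyh2320}; here it is crucial that $q\in(1,\infty)$, so that both $q$ and $q'$ are finite. For (iii), I would invoke the boundedness of the Hardy--Littlewood maximal operator on local generalized Herz spaces (see \cite{lyh2320}), applied to $[X^{1/s}]'=\dot{\mathcal{K}}^{(p/s)',(q/s)'}_{\omega^{-s},\xi}(\mathbb{R}^n)$; via the scaling and flip identities above, the required index restrictions on $\omega^{-s}$ translate back into exactly \eqref{858} and \eqref{859}. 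Finally, for (iv), when $s=1$ the required boundedness of $\mathcal{M}$ on $X(\mathbb{R}^n)=\dot{\mathcal{K}}^{p,q}_{\omega,\xi}(\mathbb{R}^n)$ again comes from \cite{lyh2320} (using $p\in(1,\infty)$ together with \eqref{858} and \eqref{859}), whereas for $s>1$ it is not needed since Lemma~\ref{2005}(i) supplies it automatically. Having checked (i)--(iv), I would then apply Theorems~\ref{1931} and~\ref{1039} to conclude.

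The main obstacle I anticipate is bookkeeping rather than conceptual: one must make sure that the two-sided index windows \eqref{858} and \eqref{859} survive, with the correct direction of every inequality, all three operations that intervene---raising $\omega$ to the power $s$, passing to an associate space (which inverts both the Lebesgue exponent and $\omega$, hence swaps the lower and upper Matuszewska--Orlicz indices), and restricting to an $(\varepsilon,\infty)$-domain---so that the hypotheses of the cited statements in \cite{lyh2320} are met at each stage. The mildly delicate point is the endpoint $s=p$ (or $s=q$), where $p/s=1$ (or $q/s=1$) and $[X^{1/s}]'$ degenerates to a borderline generalized Herz space (with an $L^\infty$ inner norm when $s=p$, or an $\ell^\infty$ outer sum when $s=q$); there one must check that the ball-Banach-function-space and maximal-operator statements in \cite{lyh2320} still apply, which they do precisely because the strict inequalities $M_0(\omega)<n(\tfrac1s-\tfrac1p)$ and $M_\infty(\omega)<n(\tfrac1s-\tfrac1p)$ in \eqref{858} and \eqref{859} collapse to $M_0(\omega)<0$ and $M_\infty(\omega)<0$, which is the correct constraint in that regime.
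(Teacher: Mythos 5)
Your proposal follows essentially the same route as the paper: apply Theorems~\ref{1931} and~\ref{1039} with $X(\mathbb{R}^n):=\dot{\mathcal{K}}^{p,q}_{\omega,\xi}(\mathbb{R}^n)$ and with $s$ as the convexification parameter, then verify the four hypotheses by invoking the structural facts about local generalized Herz spaces in \cite{lyh2320} (the identity $[\dot{\mathcal{K}}^{p,q}_{\omega,\xi}]^{1/s}=\dot{\mathcal{K}}^{p/s,q/s}_{\omega^s,\xi}$, the associate-space formula, the Matuszewska--Orlicz index conditions for the ball-Banach-function-space property, absolute continuity, and boundedness of $\mathcal M$); the paper handles the $\xi$-independence of the constants by citing that the operator-norm bounds in \cite{lyh2320} are uniform in $\xi$, whereas you use a translation argument, but both routes are fine.

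One point you should have flagged: applying Theorem~\ref{1931} with its parameter equal to $s$ necessarily produces the functional with $E_{\lambda,\gamma/s}[f]$ inside and the outer power $1/s$, not $E_{\lambda,\gamma/p}[f]$ and $1/p$ as written in the statement. In fact the version with $1/p$ cannot be obtained this way at all when $q<p$ (since then $\dot{\mathcal K}^{1,q/p}_{\omega^p,\xi}=[\dot{\mathcal{K}}^{p,q}_{\omega,\xi}]^{1/p}$ would require $q\geq p$ to be a ball Banach function space), so the printed $\gamma/p$, $1/p$ is a typo carried over from earlier statements: compare Theorem~\ref{BBM}, which correctly uses $\gamma/s$ and $1/s$. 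Your proof, like the paper's, actually establishes the $s$-version; you should have noticed and mentioned the mismatch with the statement rather than silently proving a different (correct) formula.
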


\begin{proof}
By \cite[Lemma~1.1.6]{lyh2320} and $\omega\in M(\mathbb{R}_+)$,
we find that
\begin{align}\label{2149}
\omega^{-1},\omega^s,\omega^{-s}\in M(\mathbb{R}_+).
\end{align}
From \cite[Lemma~1.3.1]{lyh2320},
we infer that
$[\dot{\mathcal{K}}^{p,q}_{\omega,\xi}(\mathbb{R}^n)
]^\frac{1}{s}=\dot{\mathcal{K}}^{\frac{p}{s},
\frac{q}{s}}_{\omega^s,\xi}(\mathbb{R}^n)$.
Using this, $p,q\in(1,\infty)$, $s\in[1,\min\{p,\,q\}]$, \eqref{858},
\eqref{2149} [here, we need $\omega^s\in M(\mathbb{R}_+)$],
and \cite[Theorem~1.2.46]{lyh2320},
we find that both $\dot{\mathcal{K}}^{
p,q}_{\omega,\xi}(\mathbb{R}^n)$ and
$[\dot{\mathcal{K}}^{p,q}_{\omega,\xi
}(\mathbb{R}^n)]^\frac{1}{s}$
are ball Banach function spaces.
From $s\in[1,\min\{p,\,q\}]$, \eqref{858},
and \cite[Theorem~1.7.9]{lyh2320},
we deduce that
$$
\left(\left[\dot{\mathcal{K}}^{p,q}_{
\omega,\xi}(\mathbb{R}^n)
\right]^\frac{1}{s}\right)'
=\dot{\mathcal{K}}^{(\frac{p}{s})',(\frac{q}{s})'}_{
\omega^{-s},\xi}(\mathbb{R}^n),
$$
which, together with \eqref{859}, \eqref{2149}
[here, we need $\omega^{-s}\in M(\mathbb{R}_+)$], and
\cite[Lemma~1.1.6 and Corollary~1.5.4]{lyh2320},
further implies that the
Hardy--Littlewood maximal operator $\mathcal{M}$ is bounded
on both $\dot{\mathcal{K}}^{p,q}_{\omega,\xi}(\mathbb{R}^n)$ and
$([\dot{\mathcal{K}}^{p,q}_{
\omega,\xi}(\mathbb{R}^n)]^\frac{1}{s})'$
with both $\|\mathcal{M}\|_{
\dot{\mathcal{K}}^{p,q}_{\omega,\xi}(\mathbb{R}^n)
\to\dot{\mathcal{K}}^{p,q}_{\omega,\xi}(\mathbb{R}^n)}$
and $\|\mathcal{M}\|_{([\dot{\mathcal{K}}^{p,q}_{
\omega,\xi}(\mathbb{R}^n)]^\frac{1}{s})'
\to([\dot{\mathcal{K}}^{p,q}_{
\omega,\xi}(\mathbb{R}^n)]^\frac{1}{s})'}$
independent of $\xi$.
On the other hand, by $p,q\in(1,\infty)$,
\eqref{2149}
[here, we need $\omega^{-1}\in M(\mathbb{R}_+)$],
and \cite[Theorem~1.4.1]{lyh2320},
we find that both
$\dot{\mathcal{K}}^{p,q}_{\omega,\xi}(\mathbb{R}^n)$
and $[\dot{\mathcal{K}}^{p,q}_{\omega,\xi}(\mathbb{R}^n)]'
=\dot{\mathcal{K}}^{p',q'}_{\omega^{-1},\xi}(\mathbb{R}^n)$ have
absolutely continuous norms.
By these and Theorems~\ref{1931} and~\ref{1039},
we obtain the desired conclusions and hence
complete the proof of Theorem~\ref{localHerz}.
\end{proof}

\begin{remark}
\begin{enumerate}
\item[(i)]
To the best of our knowledge,
Theorem~\ref{localHerz} is completely new.
\item[(ii)]
Let $p,q\in[1,\infty)$ satisfy $p=1$ or $q=1$
and let $\gamma$, $s$, and $\omega$ be
the same as in Theorem~\ref{localHerz}.
In this case,
since $[\dot{\mathcal{K}}^{p,q}_{\omega,\xi}
(\Omega)]'$
does not have an absolutely continuous norm,
it is still unclear whether or not
Theorem~\ref{localHerz} with $p=1$ or $q=1$ holds true.
\end{enumerate}
\end{remark}

Since the global generalized Herz space does not
have an absolutely continuous norm,
Theorems~\ref{1931} and~\ref{1039} seem to be inapplicable
in this setting. However,
using Theorem~\ref{localHerz},
we obtain the following characterization
similar to Theorems~\ref{1931} and~\ref{1039} for
global generalized Herz--Sobolev spaces.

\begin{theorem}\label{globalHerz}
If $\gamma$, $p$, $q$, $s$, and $\omega$
are the same as in Theorem~\ref{localHerz},
then $f\in\dot{W}^{1,\dot{\mathcal{K}}^{p,q}_{\omega}}(\Omega)$
if and only if
$f\in L_{{\mathrm{loc}}}^1(\Omega)$ and
$$
\sup_{\lambda\in(0,\infty)}\lambda
\left\|\left[\int_{\Omega}
\mathbf{1}_{E_{\lambda,\frac{\gamma}{p}}[f]}(\cdot,y)
\left|\cdot-y\right|^{\gamma-n}\,dy\right]^\frac{1}{p}
\right\|_{\dot{\mathcal{K}}^{p,q}_{\omega}
(\Omega)}<\infty,
$$
where $E_{\lambda,\frac{\gamma}{p}}[f]$
for any $\lambda\in(0,\infty)$
is the same as in \eqref{Elambda};
moreover, for such $f$,
$$
\sup_{\lambda\in(0,\infty)}\lambda
\left\|\left[\int_{\Omega}
\mathbf{1}_{E_{\lambda,\frac{\gamma}{p}}[f]}(\cdot,y)
\left|\cdot-y\right|^{\gamma-n}\,dy\right]^\frac{1}{p}
\right\|_{\dot{\mathcal{K}}^{p,q}_{\omega}(\Omega)}
\sim\left\|\,|\nabla f|\,\right\|_{\dot{\mathcal{K}}^{
p,q}_{\omega}(\Omega)},
$$
where the positive equivalence constants are independent of $f$.
\end{theorem}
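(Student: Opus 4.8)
The plan is to deduce Theorem~\ref{globalHerz} from the local result Theorem~\ref{localHerz}, following exactly the pattern by which Theorem~\ref{Morrey} was obtained from Theorem~\ref{2044}. The structural fact that makes this work is that the global generalized Herz norm is a supremum of local generalized Herz norms: by Definition~\ref{4.8}(ii) together with Remark~\ref{norm}(i) applied to each ball Banach function space $\dot{\mathcal{K}}^{p,q}_{\omega,\xi}(\mathbb{R}^n)$, for every $g\in\mathscr{M}(\Omega)$ one has
$$
\|g\|_{\dot{\mathcal{K}}^{p,q}_{\omega}(\Omega)}
=\|\widetilde{g}\|_{\dot{\mathcal{K}}^{p,q}_{\omega}(\mathbb{R}^n)}
=\sup_{\xi\in\mathbb{R}^n}\|\widetilde{g}\|_{\dot{\mathcal{K}}^{p,q}_{\omega,\xi}(\mathbb{R}^n)}
=\sup_{\xi\in\mathbb{R}^n}\|g\|_{\dot{\mathcal{K}}^{p,q}_{\omega,\xi}(\Omega)},
$$
where $\widetilde{g}$ is the zero extension of $g$ defined as in \eqref{1448}; in particular, $g\in\dot{\mathcal{K}}^{p,q}_{\omega}(\Omega)$ if and only if $g\in\dot{\mathcal{K}}^{p,q}_{\omega,\xi}(\Omega)$ for every $\xi\in\mathbb{R}^n$ with $\sup_{\xi\in\mathbb{R}^n}\|g\|_{\dot{\mathcal{K}}^{p,q}_{\omega,\xi}(\Omega)}<\infty$. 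Observe that $\gamma,p,q,s,\omega$ satisfy the hypotheses of Theorem~\ref{localHerz} for every $\xi\in\mathbb{R}^n$, since conditions \eqref{858} and \eqref{859} do not involve $\xi$.

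For the necessity, let $f\in\dot{W}^{1,\dot{\mathcal{K}}^{p,q}_{\omega}}(\Omega)$, so that $f\in L_{\mathrm{loc}}^1(\Omega)$ and $|\nabla f|\in\dot{\mathcal{K}}^{p,q}_{\omega}(\Omega)\subset\dot{\mathcal{K}}^{p,q}_{\omega,\xi}(\Omega)$ for every $\xi$; thus $f\in\dot{W}^{1,\dot{\mathcal{K}}^{p,q}_{\omega,\xi}}(\Omega)$ and, by \eqref{852},
$$
\sup_{\lambda\in(0,\infty)}\lambda\left\|\left[\int_{\Omega}
\mathbf{1}_{E_{\lambda,\frac{\gamma}{p}}[f]}(\cdot,y)
\left|\cdot-y\right|^{\gamma-n}\,dy\right]^\frac{1}{p}\right\|_{\dot{\mathcal{K}}^{p,q}_{\omega,\xi}(\Omega)}
\sim\left\|\,|\nabla f|\,\right\|_{\dot{\mathcal{K}}^{p,q}_{\omega,\xi}(\Omega)}
\leq\left\|\,|\nabla f|\,\right\|_{\dot{\mathcal{K}}^{p,q}_{\omega}(\Omega)},
$$
with positive equivalence constants independent of both $f$ and $\xi$ --- this $\xi$-uniformity, supplied by the last assertion of Theorem~\ref{localHerz}, is the crux of the argument. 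Taking $\sup_{\xi\in\mathbb{R}^n}$ on the left, interchanging it with $\sup_{\lambda\in(0,\infty)}$, and invoking the norm identity above yields the finiteness of the functional in the statement and the bound $\lesssim\|\,|\nabla f|\,\|_{\dot{\mathcal{K}}^{p,q}_{\omega}(\Omega)}$.

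For the sufficiency, let $f\in L_{\mathrm{loc}}^1(\Omega)$ with the functional in the statement finite; since $\|g\|_{\dot{\mathcal{K}}^{p,q}_{\omega,\xi}(\Omega)}\leq\|g\|_{\dot{\mathcal{K}}^{p,q}_{\omega}(\Omega)}$, the corresponding local functional (for each fixed $\xi$) is also finite, so Theorem~\ref{localHerz} gives $f\in\dot{W}^{1,\dot{\mathcal{K}}^{p,q}_{\omega,\xi}}(\Omega)$ together with the full equivalence \eqref{852}, uniformly in $\xi$. Taking $\sup_{\xi\in\mathbb{R}^n}$ of the bound $\|\,|\nabla f|\,\|_{\dot{\mathcal{K}}^{p,q}_{\omega,\xi}(\Omega)}\lesssim\sup_{\lambda}\lambda\|\cdots\|_{\dot{\mathcal{K}}^{p,q}_{\omega,\xi}(\Omega)}\le\sup_{\lambda}\lambda\|\cdots\|_{\dot{\mathcal{K}}^{p,q}_{\omega}(\Omega)}$ and using the norm identity shows $|\nabla f|\in\dot{\mathcal{K}}^{p,q}_{\omega}(\Omega)$, hence $f\in\dot{W}^{1,\dot{\mathcal{K}}^{p,q}_{\omega}}(\Omega)$; taking $\sup_{\xi}$ of both inequalities in \eqref{852} (and swapping suprema) then upgrades the two one-sided bounds to the claimed equivalence.

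I expect no serious obstacle: all the analytic substance --- the Brezis--Seeger--Van Schaftingen--Yung formula on $X(\mathbb{R}^n)$, the extrapolation, the extension theorem, and the Bourgain--Brezis--Mironescu-type characterization --- is already packaged inside Theorem~\ref{localHerz}, and the global statement follows by a soft supremum argument. The only point that genuinely requires care is the legitimacy of pushing $\sup_{\xi}$ through the equivalences and through $\sup_{\lambda}$, which hinges precisely on the $\xi$-uniformity of the implicit constants in Theorem~\ref{localHerz}; this is also the sole reason the global generalized Herz space can be treated at all, even though it has no absolutely continuous norm and thus lies outside the direct reach of Theorems~\ref{1931} and~\ref{1039}.
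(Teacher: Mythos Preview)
Your proposal is correct and follows essentially the same approach as the paper: reduce the global Herz space to a supremum of local ones via Definition~\ref{4.8}(ii) and Remark~\ref{norm}, apply Theorem~\ref{localHerz} pointwise in $\xi$, and push the supremum through using the $\xi$-uniformity of the constants in \eqref{852}. The only cosmetic difference is that the paper compresses necessity and sufficiency into a single chain of equivalences for arbitrary $f\in L^1_{\mathrm{loc}}(\Omega)$, whereas you treat the two directions separately.
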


\begin{proof}
By Remark~\ref{norm}(ii),
Definition~\ref{4.8}(ii), Theorem~\ref{localHerz},
and the fact that the positive equivalence constants
in \eqref{852} are independent of $\xi\in\mathbb{R}^n$,
we conclude that,
for any $f\in L_{{\mathrm{loc}}}^1(\Omega)$,
\begin{align*}
\left\|\,|\nabla f|\,\right\|_{\dot{\mathcal{K}}^{p,q}_{\omega}
(\Omega)}
&=\left\|\widetilde{|\nabla f|}
\right\|_{\dot{\mathcal{K}}^{p,q}_{\omega}
(\mathbb{R}^n)}\\
&=\sup_{\xi\in\mathbb{R}^n}
\left\|\widetilde{|\nabla f|}\right\|_{
\dot{\mathcal{K}}^{p,q}_{\omega,
\xi}(\mathbb{R}^n)}
=\sup_{\xi\in\mathbb{R}^n}
\left\|\,\left|\nabla f\right|\,\right\|_{
\dot{\mathcal{K}}^{p,q}_{\omega,
\xi}(\Omega)}\\
&\sim\sup_{\xi\in\mathbb{R}^n}
\sup_{\lambda\in(0,\infty)}\lambda
\left\|\left[\int_{\Omega}
\mathbf{1}_{E_{\lambda,\frac{\gamma}{p}}[f]}(\cdot,y)
\left|\cdot-y\right|^{\gamma-n}\,dy\right]^\frac{1}{p}
\right\|_{\dot{\mathcal{K}}^{p,q}_{\omega,
\xi}(\Omega)}\\
&=\sup_{\lambda\in(0,\infty)}\lambda
\left\|\left[\int_{\Omega}
\mathbf{1}_{E_{\lambda,\frac{\gamma}{p}}[f]}(\cdot,y)
\left|\cdot-y\right|^{\gamma-n}\,dy\right]^\frac{1}{p}
\right\|_{\dot{\mathcal{K}}^{p,q}_{\omega}
(\Omega)},
\end{align*}
where $\widetilde{|\nabla f|}$
is defined the same as in \eqref{1448}
with $f$ replaced by $|\nabla f|$.
This finishes the proof of Theorem~\ref{globalHerz}.
\end{proof}

\begin{remark}
To the best of our knowledge,
Theorem~\ref{globalHerz} is completely new.
\end{remark}

\subsection{Mixed-Norm Lebesgue Spaces}\label{5.2}

Let $\vec{r}:=(r_1,\ldots,r_n)
\in(0,\infty]^n$ and
$r_-:=\min\{r_1, \ldots , r_n\}$.
The \emph{mixed-norm Lebesgue
space $L^{\vec{r}}(\mathbb{R}^n)$} is defined to be the
set of all the $f\in\mathscr{M}(\mathbb{R}^n)$
having the following finite quasi-norm
\begin{equation*}
\|f\|_{L^{\vec{r}}(\mathbb{R}^n)}:=\left\{\int_{\mathbb{R}}
\cdots\left[\int_{\mathbb{R}}\left|f(x_1,\ldots,
x_n)\right|^{r_1}\,dx_1\right]^{\frac{r_2}{r_1}}
\cdots\,dx_n\right\}^{\frac{1}{r_n}}
\end{equation*}
with the usual modifications made when $r_i=
\infty$ for some $i\in\{1,\ldots,n\}$.
The study of mixed-norm Lebesgue spaces
can be traced back to H\"ormander \cite{h1960}
and Benedek and Panzone \cite{bp1961}.
For more studies on mixed-norm Lebesgue spaces,
we refer the reader
to \cite{cgn2017,cgn2019,gjn2017,gn2016,
hlyy2019,hlyy2019b,hy2021,n2019,noss2021,zyz2022}.
When $\vec{r}\in(0,\infty)^n$,
from the definition of $L^{\vec{r}}(\mathbb{R}^n)$,
we easily deduce that
$L^{\vec{r}}(\mathbb{R}^n)$
is a ball quasi-Banach function space.
But $L^{\vec{r}}(\mathbb{R}^n)$
may not be a quasi-Banach function space
in the sense of Bennett and Sharpley \cite{bs1988}
(see, for instance, \cite[Remark 7.20]{zwyy2021}).
When $X:=L^{\vec{r}}$, we
simply write $\dot{W}^{1,\vec{r}}(\Omega)
:=\dot{W}^{1,X}(\Omega)$.

Using Theorems~\ref{1931} and~\ref{1039},
we obtain the following conclusion.

\begin{theorem}\label{2116}
Let $\gamma\in\mathbb{R}\setminus\{0\}$,
$\vec{r}:=(r_1,\ldots,r_n)\in(1,\infty)^n$,
and $p\in[1,r_-)$.
Then $f\in\dot{W}^{1,\vec{r}}(\Omega)$ if and only if
$f\in L_{{\mathrm{loc}}}^1(\Omega)$ and
$$
\sup_{\lambda\in(0,\infty)}\lambda
\left\|\left[\int_{\Omega}
\mathbf{1}_{E_{\lambda,\frac{\gamma}{p}}[f]}(\cdot,y)
\left|\cdot-y\right|^{\gamma-n}\,dy\right]^\frac{1}{p}
\right\|_{L^{\vec{r}}(\Omega)}<\infty,
$$
where $E_{\lambda,\frac{\gamma}{p}}[f]$
for any $\lambda\in(0,\infty)$
is the same as in \eqref{Elambda};
moreover, for such $f$,
$$
\sup_{\lambda\in(0,\infty)}\lambda
\left\|\left[\int_{\Omega}
\mathbf{1}_{E_{\lambda,\frac{\gamma}{p}}[f]}(\cdot,y)
\left|\cdot-y\right|^{\gamma-n}\,dy\right]^\frac{1}{p}
\right\|_{L^{\vec{r}}(\Omega)}
\sim\left\|\,|\nabla f|\,\right\|_{L^{\vec{r}}(\Omega)},
$$
where the positive equivalence constants are independent of $f$.
\end{theorem}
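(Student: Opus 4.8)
The plan is to verify that, under the present hypotheses on $\gamma$, $\vec{r}$, and $p$, the mixed-norm Lebesgue space $X(\mathbb{R}^n):=L^{\vec{r}}(\mathbb{R}^n)$ satisfies all four assumptions (i)--(iv) of Theorem~\ref{1931} (equivalently, the hypotheses of Theorem~\ref{1039}), and then to apply these two theorems directly. Thus the argument is purely a verification of hypotheses, resting on the explicit descriptions of the convexification and the associate space of a mixed-norm Lebesgue space together with the boundedness of the Hardy--Littlewood maximal operator on such spaces when all exponents lie in $(1,\infty)$.

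First I would record, directly from Definition~\ref{tuhua} and the definition of $\|\cdot\|_{L^{\vec{r}}(\mathbb{R}^n)}$, that for any $p\in[1,\infty)$ one has $[L^{\vec{r}}(\mathbb{R}^n)]^{\frac1p}=L^{\vec{r}/p}(\mathbb{R}^n)$, where $\vec{r}/p:=(r_1/p,\ldots,r_n/p)$; since $p\in[1,r_-)$ forces $\vec{r}/p\in(1,\infty)^n$, this space is again a ball Banach function space, which is assumption (i). Next, from the duality of mixed-norm Lebesgue spaces (see, e.g., \cite{bp1961}), $[L^{\vec{r}}(\mathbb{R}^n)]'=L^{\vec{r}'}(\mathbb{R}^n)$ with $\vec{r}':=(r_1',\ldots,r_n')\in(1,\infty)^n$, and likewise $([L^{\vec{r}}(\mathbb{R}^n)]^{\frac1p})'=L^{(\vec{r}/p)'}(\mathbb{R}^n)$ with $(\vec{r}/p)'\in(1,\infty)^n$; since every exponent appearing here is finite, both $L^{\vec{r}}(\mathbb{R}^n)$ and $L^{\vec{r}'}(\mathbb{R}^n)$ have absolutely continuous norms, which is assumption (ii).

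For assumptions (iii) and (iv) I would use that the Hardy--Littlewood maximal operator $\mathcal{M}$ is bounded on $L^{\vec{s}}(\mathbb{R}^n)$ whenever $\vec{s}\in(1,\infty)^n$. One way to see this is to reduce ball (cube) averages to the strong maximal operator $\mathcal{M}_{\mathrm{s}}$---the composition of the one-dimensional Hardy--Littlewood maximal operators in the successive coordinates---via the pointwise bound $\mathcal{M}(f)\lesssim\mathcal{M}_{\mathrm{s}}(f)$, and then to bound $\mathcal{M}_{\mathrm{s}}$ on $L^{\vec{s}}(\mathbb{R}^n)$ by iterating the classical one-dimensional maximal inequality together with Fubini's theorem; alternatively one may invoke a known reference on mixed-norm spaces. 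Applying this with $\vec{s}:=(\vec{r}/p)'$ gives (iii), and, when $p=1$, with $\vec{s}:=\vec{r}$ gives (iv). Having verified (i)--(iv), I would invoke Theorems~\ref{1931} and~\ref{1039} to obtain exactly the asserted characterization of $\dot{W}^{1,\vec{r}}(\Omega)$, together with the displayed equivalence to $\|\,|\nabla f|\,\|_{L^{\vec{r}}(\Omega)}$, which completes the proof.

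The main obstacle is, frankly, slight: the whole proof reduces to assembling standard facts about mixed-norm Lebesgue spaces. The only delicate point is the boundedness of $\mathcal{M}$ on the associate space $L^{(\vec{r}/p)'}(\mathbb{R}^n)$ of the convexification, and it is precisely here that the \emph{strict} inequality $p<r_-$ is needed, so that $(\vec{r}/p)'\in(1,\infty)^n$; were $p=r_-$, one exponent of $\vec{r}/p$ would equal $1$ and the present machinery would break down. Everything else is routine bookkeeping with the identities for convexifications and associate spaces of mixed-norm spaces and with absolute continuity of norms.
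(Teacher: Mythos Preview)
Your proof is correct and follows essentially the same route as the paper's own argument: verify that $L^{\vec{r}}(\mathbb{R}^n)$ satisfies assumptions (i)--(iv) of Theorems~\ref{1931} and~\ref{1039} via the identities $[L^{\vec r}]^{1/p}=L^{\vec r/p}$ and $[L^{\vec r}]'=L^{\vec r'}$, the absolute continuity of mixed-norm Lebesgue norms with finite exponents, and the boundedness of $\mathcal{M}$ on $L^{\vec s}(\mathbb{R}^n)$ for $\vec s\in(1,\infty)^n$, and then invoke those two theorems. The paper simply cites references (\cite{bp1961}, \cite[Lemma~3.5]{hlyy2019}, \cite[Lemma~4.1]{gp1965}) where you sketch the strong-maximal-operator argument, but the structure and content are the same.
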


\begin{proof}
Notice that $[L^{\vec{r}}(\mathbb{R}^n)]^{\frac{1}{p}}
=L^{\frac{\vec{r}}{p}}(\mathbb{R}^n)$.
By this, $p\in[1,r_-)$,
and \cite[p.\,304, Theorem 1.b)]{bp1961}
(see also \cite[Remark~2.8(ii)]{hlyy2019}),
we find that both $L^{\vec{r}}(\mathbb{R}^n)$ and
$[L^{\vec{r}}(\mathbb{R}^n)]^{\frac{1}{p}}$
are ball Banach function spaces.
From \cite[p.\,304, Theorem~1.a)]{bp1961}
and $1<\frac{r_-}{p}\leq r_+<\infty$,
we infer that
$([L^{\vec{r}}(\mathbb{R}^n)]^{\frac{1}{p}})'
=L^{(\frac{\vec{r}}{p})'}(\mathbb{R}^n)$,
where $(\frac{\vec{r}}{p})'
:=((\frac{r_1}{p})',\ldots,(\frac{r_n}{p})')$,
which, combined with \cite[Lemma~3.5]{hlyy2019},
further implies that the
Hardy--Littlewood maximal operator $\mathcal{M}$ is bounded
on both $L^{\vec{r}}(\mathbb{R}^n)$ and
$([L^{\vec{r}}(\mathbb{R}^n)]^{\frac{1}{p}})'$.
On the other hand, by \cite[Lemma~4.1]{gp1965},
we find that both $L^{\vec{r}}(\mathbb{R}^n)$
and $[L^{\vec{r}}(\mathbb{R}^n)]'$ have
absolutely continuous norms.
From these and Theorems~\ref{1931} and~\ref{1039},
we deduce the desired conclusions and hence
complete the proof of Theorem~\ref{2116}.
\end{proof}

\begin{remark}
\begin{enumerate}
\item[(i)]
To the best of our knowledge,
Theorem~\ref{2116} is completely new.
\item[(ii)]
Let $\vec{r}:=(r_1,\ldots,r_n)\in[1,\infty)^n$
with $r_-=1$. In this case,
since the Hardy--Littlewood maximal operator
may not be bounded on $[L^{\vec{r}}(\Omega)]'$
and since $[L^{\vec{r}}(\Omega)]'$
does not have an absolutely continuous norm,
it is still unclear whether or not
Theorem~\ref{2116} with $r_-=1$ holds true.
\end{enumerate}
\end{remark}

\subsection{Variable Lebesgue Spaces}\label{5.3}

Let $r\in\mathscr{M}(\mathbb{R}^n)$ be a nonnegative function
and let
\begin{equation*}
\widetilde{r}_-:=\underset{x\in\mathbb{R}^n}{
\mathop\mathrm{\,ess\,inf\,}}\,r(x)\ \text{and}\
\widetilde{r}_+:=\underset{x\in\mathbb{R}^n}{
\mathop\mathrm{\,ess\,sup\,}}\,r(x).
\end{equation*}
A nonnegative measurable function $r$
is said to be \emph{globally
log-H\"older continuous} if there exist
$r_{\infty}\in\mathbb{R}$ and a positive
constant $C$ such that, for any
$x,y\in\mathbb{R}^n$,
\begin{equation*}
|r(x)-r(y)|\le\frac{C}{\log(e+\frac{1}{|x-y|})}\ \ \text{and}\ \
|r(x)-r_\infty|\le \frac{C}{\log(e+|x|)}.
\end{equation*}
Recall that the \emph{variable Lebesgue space
$L^{r(\cdot)}(\mathbb{R}^n)$},
associated with a nonnegative measurable function
$r$, is defined to be the set
of all the $f\in\mathscr{M}(\mathbb{R}^n)$
having the following finite quasi-norm
\begin{equation*}
\|f\|_{L^{r(\cdot)}(\mathbb{R}^n)}:=\inf\left\{\lambda
\in(0,\infty):\ \int_{\mathbb{R}^n}\left[\frac{|f(x)|}
{\lambda}\right]^{r(x)}\,dx\le1\right\}.
\end{equation*}
The variable Lebesgue space
was introduced and studied
by Kov\'a$\check{\mathrm{c}}$ik and R\'akosn\'ik
\cite{kr1991}. In \cite{kr1991},
the variable Lebesgue space
was applied to study both the mapping properties
of Nemytskii operators and the related nonlinear elliptic
boundary value problems.
We refer the reader to
\cite{b2018,bbd2021,cf2013,cw2014,dhr2009,ns2012,n1950,n1951}
for more studies on variable Lebesgue spaces.
By the definition of $L^{r(\cdot)}(\mathbb{R}^n)$,
it is easy to prove that $L^{r(\cdot)}(\mathbb{R}^n)$
is a ball quasi-Banach function space
(see, for instance, \cite[Section~7.8]{shyy2017}).
When $1\leq\widetilde r_-\le \widetilde r_+<\infty$,
$(L^{r(\cdot)}(\mathbb{R}^n), \|\cdot\|_{
L^{r(\cdot)}(\mathbb{R}^n)})$ is
a Banach function space
in the sense of Bennett and Sharpley \cite{bs1988}
and hence also a ball Banach function space.
When $X:=L^{{r}(\cdot)}$,
we simply write
$\dot{W}^{1,r(\cdot)}(\Omega):=\dot{W}^{1,X}(\Omega)$.

Using Theorems~\ref{1931} and~\ref{1039},
we obtain the following conclusion.

\begin{theorem}\label{2041}
Let $\gamma\in\mathbb{R}\setminus\{0\}$
and $r:\ \mathbb{R}^n\to(0,\infty)$ be globally
log-H\"older continuous.
Let $1<\widetilde{r}_-\leq\widetilde{r}_+<\infty$
and $p\in[1,\widetilde{r}_-)$.
Then $f\in\dot{W}^{1,r(\cdot)}(\Omega)$ if and only if
$f\in L_{{\mathrm{loc}}}^1(\Omega)$ and
$$
\sup_{\lambda\in(0,\infty)}\lambda
\left\|\left[\int_{\Omega}
\mathbf{1}_{E_{\lambda,\frac{\gamma}{p}}[f]}(\cdot,y)
\left|\cdot-y\right|^{\gamma-n}\,dy\right]^\frac{1}{p}
\right\|_{L^{r(\cdot)}(\Omega)}<\infty,
$$
where $E_{\lambda,\frac{\gamma}{p}}[f]$
for any $\lambda\in(0,\infty)$
is the same as in \eqref{Elambda};
moreover, for such $f$,
$$
\sup_{\lambda\in(0,\infty)}\lambda
\left\|\left[\int_{\Omega}
\mathbf{1}_{E_{\lambda,\frac{\gamma}{p}}[f]}(\cdot,y)
\left|\cdot-y\right|^{\gamma-n}\,dy\right]^\frac{1}{p}
\right\|_{L^{r(\cdot)}(\Omega)}
\sim\left\|\,|\nabla f|\,\right\|_{L^{r(\cdot)}(\Omega)},
$$
where the positive equivalence constants are independent of $f$.
\end{theorem}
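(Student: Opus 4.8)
The plan is to proceed exactly as in the proofs of Theorems~\ref{2044} and~\ref{2116}: we check that $X(\mathbb{R}^n):=L^{r(\cdot)}(\mathbb{R}^n)$ satisfies all the hypotheses of Theorems~\ref{1931} and~\ref{1039} and then invoke these theorems (the former when $\Omega=\mathbb{R}^n$ and the latter when $\Omega$ is an $(\varepsilon,\infty)$-domain).

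First I would identify the $\frac1p$-convexification: a direct computation with the Luxemburg norm shows $[L^{r(\cdot)}(\mathbb{R}^n)]^{\frac1p}=L^{r(\cdot)/p}(\mathbb{R}^n)$. Since $p\in[1,\widetilde{r}_-)$, the exponent $r(\cdot)/p$ obeys $1<\widetilde{r}_-/p=\widetilde{(r/p)}_-\le\widetilde{(r/p)}_+=\widetilde{r}_+/p<\infty$, so, by the cited properties of variable Lebesgue spaces (see, for instance, \cite{kr1991} and \cite{dhhr2011}), both $L^{r(\cdot)}(\mathbb{R}^n)$ and $[L^{r(\cdot)}(\mathbb{R}^n)]^{\frac1p}$ are ball Banach function spaces; this is assumption~(i) of Theorem~\ref{1931}.

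Next, the Kov\'a\v{c}ik--R\'akosn\'ik duality \cite{kr1991} gives $[L^{r(\cdot)}(\mathbb{R}^n)]'=L^{r'(\cdot)}(\mathbb{R}^n)$ with equivalent norms, where $\frac1{r(x)}+\frac1{r'(x)}=1$; from $1<\widetilde{r}_-\le\widetilde{r}_+<\infty$ we get $1<\widetilde{r'}_-\le\widetilde{r'}_+<\infty$, so both $L^{r(\cdot)}(\mathbb{R}^n)$ and its associate space are Banach function spaces with exponents bounded and bounded away from $1$, hence both have absolutely continuous norms; this is assumption~(ii). For assumption~(iii), note that $([L^{r(\cdot)}(\mathbb{R}^n)]^{\frac1p})'=[L^{r(\cdot)/p}(\mathbb{R}^n)]'=L^{(r(\cdot)/p)'}(\mathbb{R}^n)$; the exponent $(r(\cdot)/p)'$ is again globally log-H\"older continuous (scaling an exponent by a positive constant and passing to the conjugate exponent both preserve global log-H\"older continuity) and has essential infimum strictly larger than $1$, so the boundedness theorem for the Hardy--Littlewood maximal operator on variable Lebesgue spaces (see, for instance, \cite{cf2013,dhhr2011}) yields that $\mathcal{M}$ is bounded on $([L^{r(\cdot)}(\mathbb{R}^n)]^{\frac1p})'$. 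Finally, assumption~(iv), which is needed only when $p=1$, follows from the same maximal-function theorem applied directly to $r$ (globally log-H\"older with $\widetilde{r}_->1$), which gives the boundedness of $\mathcal{M}$ on $L^{r(\cdot)}(\mathbb{R}^n)$.

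Having verified all four assumptions of Theorem~\ref{1931} (and thus also those required by Theorem~\ref{1039}), the claimed equivalence and the norm identity follow at once. The only point demanding some care is the stability of both the global log-H\"older condition and the two-sided bound $1<\widetilde{r}_-\le\widetilde{r}_+<\infty$ under the operations $r\mapsto r/p$ and $r\mapsto r'$; once this bookkeeping is in place, the argument is a routine specialization of the general theorems, and I expect no essential obstacle beyond locating the precise statements in the variable-exponent literature.
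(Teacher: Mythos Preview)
Your proposal is correct and follows essentially the same approach as the paper: verify that $X=L^{r(\cdot)}$ satisfies the hypotheses (i)--(iv) of Theorems~\ref{1931} and~\ref{1039} by identifying the convexification $[L^{r(\cdot)}]^{1/p}=L^{r(\cdot)/p}$, its associate $L^{(r(\cdot)/p)'}$, checking absolute continuity of the norms of $L^{r(\cdot)}$ and $L^{r'(\cdot)}$, and invoking the standard boundedness of $\mathcal{M}$ on variable Lebesgue spaces with globally log-H\"older exponent bounded away from~$1$. The paper does exactly this (citing \cite{cf2013} and \cite{ahh2015} rather than \cite{kr1991} and \cite{dhhr2011}); your explicit remark that scaling and conjugation preserve the log-H\"older condition is a welcome clarification that the paper leaves implicit.
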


\begin{proof}
Notice that $[L^{r(\cdot)}(\mathbb{R}^n)]^{\frac{1}{p}}
=L^{\frac{r(\cdot)}{p}}(\mathbb{R}^n)$.
By this, $p\in[1,\widetilde{r}_-)$,
and \cite[Theorems~2.17 and~2.71]{cf2013},
we conclude that both
$L^{r(\cdot)}(\mathbb{R}^n)$ and
$L^{\frac{r(\cdot)}{p}}(\mathbb{R}^n)$
are ball Banach function spaces.
From \cite[Theorem~2.80]{cf2013} and
$1<\frac{\widetilde{r}_-}{p}\leq\widetilde{r}_+<\infty$,
we infer that
$([L^{r(\cdot)}(\mathbb{R}^n)]^{\frac{1}{p}})'
=L^{(\frac{r(\cdot)}{p})'}(\mathbb{R}^n)$,
where $(\frac{r(x)}{p})'$ is the conjugate index
of $\frac{r(x)}{p}$ for any $x\in\mathbb{R}^n$,
which, together with \cite[Theorem~1.7]{ahh2015},
further implies that the Hardy--Littlewood
maximal operator $\mathcal{M}$
is bounded on both
$L^{r(\cdot)}(\mathbb{R}^n)$ and
$([L^{r(\cdot)}(\mathbb{R}^n)]^{\frac{1}{p}})'$.
Moreover,
by \cite[p.\,73]{cf2013},
we conclude that both $L^{r(\cdot)}(\mathbb{R}^n)$
and $[L^{r(\cdot)}(\mathbb{R}^n)]'$
have absolutely continuous norms.
From these and Theorems~\ref{1931},
we deduce the desired conclusions and hence
complete the proof of Theorem~\ref{2041}.
\end{proof}

\begin{remark}
\begin{enumerate}
\item[(i)]
To the best of our knowledge,
Theorem~\ref{2041} is completely new.
\item[(ii)]
Let $r:\ \mathbb{R}^n\to(0,\infty)$ be globally
log-H\"older continuous with
$1=\widetilde{r}_-\leq\widetilde{r}_+<\infty$.
In this case, since $[L^{r(\cdot)}(\Omega)]'$
does not have an absolutely continuous
norm, it is still unclear whether or not
Theorem~\ref{2041} with $\widetilde{r}_-=1$
holds true.
\end{enumerate}
\end{remark}

\subsection{Lorentz Spaces}\label{5.7}

Recall that, for any $r,\tau\in(0,\infty)$,
the \emph{Lorentz space $L^{r,\tau}(\mathbb{R}^n)$}
is defined to be the set of all the
$f\in\mathscr{M}(\mathbb{R}^n)$ having the following finite quasi-norm
\begin{equation*}
\|f\|_{L^{r,\tau}(\mathbb{R}^n)}
:=\left\{\int_0^{\infty}
\left[t^{\frac{1}{r}}f^*(t)\right]^\tau
\frac{\,dt}{t}\right\}^{\frac{1}{\tau}},
\end{equation*}
where $f^*$ denotes the \emph{decreasing rearrangement of $f$},
defined by setting, for any $t\in[0,\infty)$,
\begin{equation*}
f^*(t):=\inf\left\{s\in(0,\infty):\ \left|
\left\{x\in\mathbb{R}^n:\ |f(x)|>s\right\}\right|\leq t\right\}
\end{equation*}
with the convention $\inf \emptyset = \infty$.
As a generalization of Lebesgue spaces,
Lorentz spaces were originally studied by Lorentz \cite{l1950,l1951},
which prove the intermediate spaces of Lebesgue spaces in the real interpolation method
(see, for instance, \cite{c1964}).
We refer the reader to \cite{CF1,CF2,Hunt,osttw2012,st2001}
for more studies on Lorentz spaces
and to \cite{lwyy2019,lyy2016,lyy2017,
lyy2018,zhy2020}
for more studies on Hardy--Lorentz spaces.
When $r,\tau\in(0,\infty)$,
$L^{r,\tau}(\mathbb{R}^n)$ is a quasi-Banach function space
in the sense of Bennett and Sharpley \cite{bs1988}
and hence a ball quasi-Banach function space
(see, for instance, \cite[Theorem 1.4.11]{g2014});
when $r,\tau\in(1,\infty)$,
$L^{r,\tau}(\mathbb{R}^n)$ is a Banach function space
and hence a ball Banach function space
(see, for instance, \cite[p.\,87]{shyy2017}
and \cite[p.\,74]{g2014}).

Using Theorems~\ref{1931} and~\ref{1039},
we obtain the following conclusion.

\begin{theorem}\label{2047}
Let $\gamma\in\mathbb{R}\setminus\{0\}$,
$r,\tau\in(1,\infty)$, and $p\in[1,\min\{r,\tau\})$.
Then $f\in\dot{W}^{1,L^{r,\tau}}(\Omega)$ if and only if
$f\in L_{{\mathrm{loc}}}^1(\Omega)$ and
$$
\sup_{\lambda\in(0,\infty)}\lambda
\left\|\left[\int_{\Omega}
\mathbf{1}_{E_{\lambda,\frac{\gamma}{p}}[f]}(\cdot,y)
\left|\cdot-y\right|^{\gamma-n}\,dy\right]^\frac{1}{p}
\right\|_{L^{r,\tau}(\Omega)}<\infty,
$$
where $E_{\lambda,\frac{\gamma}{p}}[f]$
for any $\lambda\in(0,\infty)$
is the same as in \eqref{Elambda};
moreover, for such $f$,
$$
\sup_{\lambda\in(0,\infty)}\lambda
\left\|\left[\int_{\Omega}
\mathbf{1}_{E_{\lambda,\frac{\gamma}{p}}[f]}(\cdot,y)
\left|\cdot-y\right|^{\gamma-n}\,dy\right]^\frac{1}{p}
\right\|_{L^{r,\tau}(\Omega)}
\sim\left\|\,|\nabla f|\,\right\|_{L^{r,\tau}(\Omega)},
$$
where the positive equivalence constants are independent of $f$.
\end{theorem}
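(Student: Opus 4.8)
The plan is to obtain Theorem~\ref{2047} as a direct application of Theorems~\ref{1931} and~\ref{1039} with $X(\mathbb{R}^n):=L^{r,\tau}(\mathbb{R}^n)$; thus the whole task reduces to verifying, under the hypotheses $r,\tau\in(1,\infty)$ and $p\in[1,\min\{r,\tau\})$, that $L^{r,\tau}(\mathbb{R}^n)$ satisfies assumptions (i)--(iv) of Theorem~\ref{1931}. The starting point is the identification of the $\frac1p$-convexification: a direct computation with the decreasing rearrangement (using $(|f|^{1/p})^*=(f^*)^{1/p}$ and the substitution $t\mapsto t^{1/p}$ in the defining integral) gives $[L^{r,\tau}(\mathbb{R}^n)]^\frac1p=L^{\frac rp,\frac\tau p}(\mathbb{R}^n)$ up to equivalent norms; since $p<\min\{r,\tau\}$ forces $\frac rp,\frac\tau p\in(1,\infty)$, the space $L^{\frac rp,\frac\tau p}(\mathbb{R}^n)$ is again a ball Banach function space by the same structural fact, recalled in the text above through \cite[p.\,87]{shyy2017} and \cite[p.\,74]{g2014}, that gives $L^{r,\tau}(\mathbb{R}^n)$ this property. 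This settles assumption (i).

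For assumption (ii), I would invoke Hunt's identification of the K\"othe dual of a Lorentz space: for $a\in(1,\infty)$ and $b\in[1,\infty)$ one has $[L^{a,b}(\mathbb{R}^n)]'=L^{a',b'}(\mathbb{R}^n)$ with equivalent norms (see \cite{Hunt}). Taking $(a,b)=(r,\tau)$ shows $[L^{r,\tau}(\mathbb{R}^n)]'=L^{r',\tau'}(\mathbb{R}^n)$ with $r',\tau'\in(1,\infty)$, so that both $L^{r,\tau}(\mathbb{R}^n)$ and its associate space are Lorentz spaces with a finite second exponent and therefore both have absolutely continuous norms (finiteness of the second index being precisely what guarantees this; see \cite[Chapter~1]{bs1988}). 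For assumption (iii), combining the convexification identity with Hunt's theorem gives $([L^{r,\tau}(\mathbb{R}^n)]^\frac1p)'=L^{(\frac rp)',(\frac\tau p)'}(\mathbb{R}^n)$ with $(\frac rp)'\in(1,\infty)$; since the Hardy--Littlewood maximal operator $\mathcal{M}$ is bounded on every $L^{c,d}(\mathbb{R}^n)$ with $c\in(1,\infty)$ and $d\in(0,\infty]$ (by Marcinkiewicz interpolation with change of indices between the weak $(1,1)$ and the $L^\infty$ bounds of $\mathcal{M}$; see \cite{g2014}), $\mathcal{M}$ is bounded on $([L^{r,\tau}(\mathbb{R}^n)]^\frac1p)'$. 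Finally, assumption (iv) is vacuous unless $p=1$, in which case $\min\{r,\tau\}>1$ forces $r\in(1,\infty)$, and the same interpolation result shows $\mathcal{M}$ is bounded on $L^{r,\tau}(\mathbb{R}^n)$ itself.

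With assumptions (i)--(iv) in hand, Theorem~\ref{1931} yields the claimed characterization and norm equivalence when $\Omega=\mathbb{R}^n$, while Theorem~\ref{1039} yields them when $\Omega$ is an $(\varepsilon,\infty)$-domain; together these exhaust all admissible $\Omega$ in this section, and the proof is complete. I do not anticipate any genuine obstacle: the argument is purely a matter of quoting the right structural properties of Lorentz spaces. The only delicate point is that several of these identities---the convexification formula and Hunt's duality---hold only up to equivalence of norms (exact equality requiring the $f^{**}$-normalization when the second index exceeds the first); but the notion of a ball Banach function space as well as all the assertions of Theorems~\ref{1931} and~\ref{1039} are stable under replacing a norm by an equivalent one, so this causes no difficulty.
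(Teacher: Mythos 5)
Your proof is correct and follows essentially the same route as the paper: one simply verifies that $L^{r,\tau}(\mathbb{R}^n)$ satisfies assumptions (i)--(iv) of Theorem~\ref{1931} via the convexification identity $[L^{r,\tau}(\mathbb{R}^n)]^{1/p}=L^{r/p,\tau/p}(\mathbb{R}^n)$, the duality $[L^{a,b}(\mathbb{R}^n)]'=L^{a',b'}(\mathbb{R}^n)$, absolute continuity for finite second index, and boundedness of $\mathcal M$ on Lorentz spaces with first index in $(1,\infty)$, then applies Theorems~\ref{1931} and~\ref{1039}. The paper supports these intermediate facts by direct citations to the literature rather than the brief first-principles sketches you give, but the substance is identical, including your closing caveat that the Lorentz quasi-norm only satisfies the ball-Banach-function-space axioms up to an equivalent (renormalized) norm, which is indeed harmless here.
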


\begin{proof}
Notice that
$[L^{r,\tau}(\mathbb{R}^n)]^\frac{1}{p}
=L^{\frac{r}{p},\frac{\tau}{p}}(\mathbb{R}^n)$.
By this, $p\in[1,\min\{r,\tau\})$,
and the conclusion in \cite[p.\,87]{shyy2017},
we conclude that both $L^{r,\tau}(\mathbb{R}^n)$
and $[L^{r,\tau}(\mathbb{R}^n)]^\frac{1}{p}$
are ball Banach function spaces.
From \cite[Theorem~1.4.16(vi)]{g2014}, we infer that
$([L^{r,\tau}(\mathbb{R}^n)]^\frac{1}{p})'
=L^{(\frac{r}{p})',(\frac{\tau}{p})'}(\mathbb{R}^n)$,
which, combined with \cite[Lemma~3.5]{zhy2020},
further implies that
the Hardy--Littlewood maximal operator $\mathcal{M}$
is bounded on both $L^{r,\tau}(\mathbb{R}^n)$ and
$([L^{r,\tau}(\mathbb{R}^n)]^\frac{1}{p})'$.
Moreover, by \cite[Remark~3.4(iii)]{wyy2020},
we find that both $L^{r,\tau}(\mathbb{R}^n)$
and $[L^{r,\tau}(\mathbb{R}^n)]'$ have
absolutely continuous norms.
From these and Theorems~\ref{1931} and~\ref{1039},
we deduce the desired conclusions and hence
complete the proof of Theorem~\ref{2047}.
\end{proof}

\begin{remark}
\begin{enumerate}
\item[\textup{(i)}]
To the best of our knowledge,
Theorem~\ref{2047} is completely new.
\item[\textup{(ii)}]
Let $r,\tau\in[1,\infty)$ with $r=1$ or $\tau=1$.
In this case, since $L^{r,\tau}(\Omega)$ might not be
a ball Banach function space and since
$[L^{r,\tau}(\Omega)]'$ does not have an absolutely
continuous norm, it is still unclear whether or not
Theorem~\ref{2047} in this case holds true.
\end{enumerate}
\end{remark}

\subsection{Orlicz Spaces}\label{5.5}

A non-decreasing function $\Phi:\ [0,\infty)
\ \to\ [0,\infty)$ is called an \emph{Orlicz function} if
\begin{enumerate}
\item[(i)]
$\Phi(0)= 0$;
\item[(ii)]
for any $t\in(0,\infty)$,
$\Phi(t)\in(0,\infty)$;
\item[(iii)]
$\lim_{t\to\infty}\Phi(t)=\infty$.
\end{enumerate}
An Orlicz
function $\Phi$ is said to be of \emph{lower}
(resp. \emph{upper}) \emph{type} $r$ for some
$r\in\mathbb{R}$ if there exists 
$C_{(r)}\in(0,\infty)$ such that,
for any $t\in[0,\infty)$ and
$s\in(0,1)$ [resp. $s\in[1,\infty)$],
$\Phi(st)\le C_{(r)} s^r\Phi(t)$.
Throughout this subsection, we always assume that
$\Phi:\ [0,\infty)\ \to\ [0,\infty)$
is an Orlicz function with both positive lower
type $r_{\Phi}^-$ and positive upper type
$r_{\Phi}^+$.
The \emph{Orlicz space $L^\Phi(\mathbb{R}^n)$}
is defined to be the set of all the
$f\in\mathscr{M}(\mathbb{R}^n)$
having the following finite quasi-norm
\begin{equation*}
\|f\|_{L^\Phi(\mathbb{R}^n)}:=\inf\left\{\lambda\in
(0,\infty):\ \int_{\mathbb{R}^n}\Phi\left(\frac{|f(x)|}
{\lambda}\right)\,dx\le1\right\}.
\end{equation*}
The Orlicz space was introduced by Birnbaum and Orlicz \cite{bo1931}
and Orlicz \cite{o}, which is widely used in
various branches of analysis.
We refer the reader to \cite{b2021,dfmn2021,ns2014,rr2002,ylk2017}
for more studies on Orlicz spaces.
It is easy to show that $L^\Phi(\mathbb{R}^n)$
is a quasi-Banach function space
in the sense of Bennett and Sharpley \cite{bs1988}
(see \cite[Section~7.6]{shyy2017})
and hence a ball quasi-Banach function space.
In particular,
when $1\leq r^-_\Phi\leq r^+_\Phi<\infty$,
$L^\Phi(\mathbb{R}^n)$ is a Banach function space
and hence a ball Banach function space.
When $X:=L^{\Phi}$,
we simply write
$\dot{W}^{1,\Phi}(\Omega):=\dot{W}^{1,X}(\Omega)$.

Using Theorems~\ref{1931} and~\ref{1039},
we obtain the following conclusion.

\begin{theorem}\label{2051}
Let $\gamma\in\mathbb{R}\setminus\{0\}$ and
$\Phi$ be an Orlicz function with both
lower type $r^-_{\Phi}$
and upper type $r^+_\Phi$.
Let $1<r^-_{\Phi}\leq r^+_{\Phi}<\infty$
and $p\in[1,r^-_{\Phi})$.
Then $f\in\dot{W}^{1,\Phi}(\Omega)$ if and only if
$f\in L_{{\mathrm{loc}}}^1(\Omega)$ and
$$
\sup_{\lambda\in(0,\infty)}\lambda
\left\|\left[\int_{\Omega}
\mathbf{1}_{E_{\lambda,\frac{\gamma}{p}}[f]}(\cdot,y)
\left|\cdot-y\right|^{\gamma-n}\,dy\right]^\frac{1}{p}
\right\|_{L^{\Phi}(\Omega)}<\infty,
$$
where $E_{\lambda,\frac{\gamma}{p}}[f]$
for any $\lambda\in(0,\infty)$
is the same as in \eqref{Elambda};
moreover, for such $f$,
$$
\sup_{\lambda\in(0,\infty)}\lambda
\left\|\left[\int_{\Omega}
\mathbf{1}_{E_{\lambda,\frac{\gamma}{p}}[f]}(\cdot,y)
\left|\cdot-y\right|^{\gamma-n}\,dy\right]^\frac{1}{p}
\right\|_{L^{\Phi}(\Omega)}
\sim\left\|\,|\nabla f|\,\right\|_{L^{\Phi}(\Omega)},
$$
where the positive equivalence constants are independent of $f$.
\end{theorem}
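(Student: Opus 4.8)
The plan is to deduce Theorem~\ref{2051} from Theorems~\ref{1931} and~\ref{1039} applied with $X:=L^\Phi$, exactly along the lines of the proofs of Theorems~\ref{2044}, \ref{2116}, \ref{2041}, and~\ref{2047}. Thus the entire task is to verify that, under the standing hypotheses $1<r^-_{\Phi}\le r^+_{\Phi}<\infty$ and $p\in[1,r^-_{\Phi})$, the Orlicz space $L^\Phi(\mathbb{R}^n)$ meets the four assumptions of Theorem~\ref{1931}.

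First I would pin down the $p$-convexification. Setting $\Phi_p(t):=\Phi(t^{1/p})$ for $t\in[0,\infty)$, one checks that $\Phi_p$ is again an Orlicz function with lower type $r^-_{\Phi}/p$ and upper type $r^+_{\Phi}/p$, and that $[L^\Phi(\mathbb{R}^n)]^{1/p}=L^{\Phi_p}(\mathbb{R}^n)$; since $p<r^-_{\Phi}\le r^+_{\Phi}$, both $r^-_{\Phi}/p$ and $r^+_{\Phi}/p$ lie in $(1,\infty)$, so by the facts recalled before Theorem~\ref{2051} (see also \cite[Section~7.6]{shyy2017}) both $L^\Phi(\mathbb{R}^n)$ and $[L^\Phi(\mathbb{R}^n)]^{1/p}$ are ball Banach function spaces, which is assumption~(i). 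Next I would invoke the classical identification of the associate space of an Orlicz space with the Orlicz space generated by the complementary Orlicz function (see, for instance, \cite{rr2002}), so that $[L^\Phi(\mathbb{R}^n)]'=L^{\widetilde{\Phi}}(\mathbb{R}^n)$ and $([L^\Phi(\mathbb{R}^n)]^{1/p})'=[L^{\Phi_p}(\mathbb{R}^n)]'=L^{\widetilde{\Phi_p}}(\mathbb{R}^n)$, where $\widetilde{\Phi}$ and $\widetilde{\Phi_p}$ denote the respective complementary functions. Since complementation turns lower type $a$ and upper type $b$ (with $1<a\le b<\infty$) into upper type $a'$ and lower type $b'$, the function $\widetilde{\Phi}$ has finite positive upper type $(r^-_{\Phi})'$, while $\widetilde{\Phi_p}$ has finite positive upper type $(r^-_{\Phi}/p)'$ and lower type $(r^+_{\Phi}/p)'>1$.

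Then I would dispatch the remaining hypotheses. For~(ii): an Orlicz space $L^\Psi(\mathbb{R}^n)$ has an absolutely continuous norm as soon as $\Psi$ has finite upper type (that is, $\Psi\in\Delta_2$), so both $L^\Phi(\mathbb{R}^n)$ (upper type $r^+_{\Phi}<\infty$) and $[L^\Phi(\mathbb{R}^n)]'=L^{\widetilde{\Phi}}(\mathbb{R}^n)$ (upper type $(r^-_{\Phi})'<\infty$) have absolutely continuous norms. For~(iii): the Hardy--Littlewood maximal operator $\mathcal{M}$ is bounded on an Orlicz space $L^\Psi(\mathbb{R}^n)$ whenever $\Psi$ has lower type strictly greater than $1$ and finite upper type; applied to $\widetilde{\Phi_p}$ (lower type $(r^+_{\Phi}/p)'>1$, upper type $(r^-_{\Phi}/p)'<\infty$) this gives the boundedness of $\mathcal{M}$ on $([L^\Phi(\mathbb{R}^n)]^{1/p})'$. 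For~(iv): when $p=1$ the same criterion applied to $\Phi$ itself (lower type $r^-_{\Phi}>1$, upper type $r^+_{\Phi}<\infty$) shows that $\mathcal{M}$ is bounded on $L^\Phi(\mathbb{R}^n)$, and when $p\in(1,\infty)$ this also follows from Lemma~\ref{2005}(i). Having checked~(i)--(iv), I would apply Theorems~\ref{1931} and~\ref{1039} with $X:=L^\Phi$ to obtain the asserted characterization together with the norm equivalence, finishing the proof.

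The hardest part will be the bookkeeping with the lower and upper type indices: one must make sure that forming the $p$-convexification $\Phi_p$ and then the complementary functions $\widetilde{\Phi}$ and $\widetilde{\Phi_p}$ keeps every relevant lower type strictly above $1$ and every relevant upper type finite, since the ball-Banach-function-space property, the absolute continuity of the norms, and the boundedness of $\mathcal{M}$ all rest on precisely these inequalities. The restriction $p<r^-_{\Phi}$ is exactly what forces $\Phi_p$ to have lower type greater than $1$ --- hence $L^{\Phi_p}$ to be a genuine ball Banach function space and $\widetilde{\Phi_p}$ to have finite upper type --- which, together with the finiteness of $r^+_{\Phi}$, makes $\mathcal{M}$ bounded on $([L^\Phi(\mathbb{R}^n)]^{1/p})'$; beyond this, the argument is a routine invocation of standard properties of Orlicz spaces.
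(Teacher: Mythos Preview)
Your proposal is correct and follows essentially the same approach as the paper: verify that $L^\Phi(\mathbb{R}^n)$ satisfies assumptions (i)--(iv) of Theorem~\ref{1931} via the identification $[L^\Phi(\mathbb{R}^n)]^{1/p}=L^{\Phi_p}(\mathbb{R}^n)$ with $\Phi_p(t)=\Phi(t^{1/p})$, and then apply Theorems~\ref{1931} and~\ref{1039}. The paper's proof is terser --- it dispatches the boundedness of $\mathcal{M}$ and the absolute continuity by direct citations to \cite{kk1991}, \cite{rr2002}, and \cite{zyyw2019} rather than tracking the type indices of the complementary functions explicitly --- but the substance is the same.
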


\begin{proof}
Let $\Phi_p(t):=\Phi(t^{\frac{1}{p}})$ for any $t\in[0,\infty)$.
Notice that
$[L^\Phi(\mathbb{R}^n)]^\frac{1}{p}=L^{\Phi_p}(\mathbb{R}^n)$.
From this and $p\in[1,r^-_{\Phi})$,
it is easy to prove that both $L^\Phi(\mathbb{R}^n)$
and $[L^\Phi(\mathbb{R}^n)]^\frac{1}{p}$
are ball Banach function spaces.
By \cite[Theorem~1.2.1]{kk1991} and \cite[Theorem~13]{rr2002},
we conclude that
$\mathcal{M}$ is bounded on both $L^\Phi(\mathbb{R}^n)$ and
$([L^\Phi(\mathbb{R}^n)]^{\frac{1}{p}})'$.
In addition, from the proof of \cite[Lemma~4.5]{zyyw2019},
we infer that both $L^\Phi(\mathbb{R}^n)$
and $[L^\Phi(\mathbb{R}^n)]'$
have absolutely continuous norms.
By these and Theorems~\ref{1931} and~\ref{1039},
we obtain the desired conclusions and hence
complete the proof Theorem~\ref{2051}.
\end{proof}

\begin{remark}
\begin{enumerate}
\item[\textup{(i)}]
To the best of our knowledge,
Theorem~\ref{2051} is completely new.
\item[\textup{(ii)}]
Let $\Phi$ be an Orlicz function with both
positive lower type $r^-_{\Phi}$
and positive upper type $r^+_\Phi$
such that $1=r^-_{\Phi}\leq r^+_{\Phi}<\infty$.
In this case, since $[L^\Phi(\Omega)]'$
does not have an absolutely continuous norm,
it is still
unclear whether or not Theorem~\ref{2051}
in this case holds true.
\end{enumerate}
\end{remark}

\subsection{Orlicz-Slice Spaces}\label{5.6}

Throughout this subsection, we always assume
that $\Phi: [0,\infty)\to [0,\infty)$
is an Orlicz function with both positive
lower type $r_{\Phi}^-$ and positive upper
type $r_{\Phi}^+$. For any given $t,r\in(0,\infty)$,
the \emph{Orlicz-slice space}
$(E_\Phi^r)_t(\mathbb{R}^n)$ is defined to be the set of all
the $f\in\mathscr{M}(\mathbb{R}^n)$ having the following finite quasi-norm
\begin{equation*}
\|f\|_{(E_\Phi^r)_t(\mathbb{R}^n)} :=\left\{\int_{\mathbb{R}^n}
\left[\frac{\|f\mathbf{1}_{B(x,t)}\|_{L^\Phi(\mathbb{R}^n)}}
{\|\mathbf{1}_{B(x,t)}\|_{L^\Phi(\mathbb{R}^n)}}\right]
^r\,dx\right\}^{\frac{1}{r}}.
\end{equation*}
These spaces were originally introduced in
\cite{zyyw2019} as a generalization of both
the slice space of Auscher and Mourgoglou
\cite{am2019,ap2017} and the Wiener amalgam space
in \cite{h2019,h1975,kntyy2007}.
We refer the reader to \cite{hkp2021,hkp2022,zyy2022,zyyw2019}
for more studies on Orlicz-slice spaces.
By \cite[Lemma 2.28]{zyyw2019} and \cite[Remark 7.41(i)]{zwyy2021},
we find that the Orlicz-slice
space $(E_\Phi^r)_t(\mathbb{R}^n)$ is a
ball Banach function space, but in general is not a
Banach function space
in the sense of Bennett and Sharpley \cite{bs1988}.

Using Theorems~\ref{1931} and~\ref{1039},
we obtain the following conclusion.

\begin{theorem}\label{2055}
Let $\gamma\in\mathbb{R}\setminus\{0\}$
and $\Phi$ be an Orlicz function with
both positive lower type $r^-_{\Phi}$
and positive upper type $r^+_\Phi$.
Let $t\in(0,\infty)$, $r\in(1,\infty)$,
$1<r^-_{\Phi}\leq r^+_{\Phi}<\infty$,
and $p\in[1,\min\{r^-_{\Phi},\,r\})$.
Then $f\in\dot{W}^{1,(E_\Phi^r)_t}(\Omega)$
if and only if
$f\in L_{{\mathrm{loc}}}^1(\Omega)$ and
$$
\sup_{\lambda\in(0,\infty)}\lambda
\left\|\left[\int_{\Omega}
\mathbf{1}_{E_{\lambda,\frac{\gamma}{p}}[f]}(\cdot,y)
\left|\cdot-y\right|^{\gamma-n}\,dy\right]^\frac{1}{p}
\right\|_{(E_\Phi^r)_t(\Omega)}<\infty,
$$
where $E_{\lambda,\frac{\gamma}{p}}[f]$
for any $\lambda\in(0,\infty)$
is the same as in \eqref{Elambda};
moreover, for such $f$,
$$
\sup_{\lambda\in(0,\infty)}\lambda
\left\|\left[\int_{\Omega}
\mathbf{1}_{E_{\lambda,\frac{\gamma}{p}}[f]}(\cdot,y)
\left|\cdot-y\right|^{\gamma-n}\,dy\right]^\frac{1}{p}
\right\|_{(E_\Phi^r)_t(\Omega)}
\sim\left\|\,|\nabla f|\,\right\|_{(E_\Phi^r)_t(\Omega)},
$$
where the positive equivalence constants are independent of $f$.
\end{theorem}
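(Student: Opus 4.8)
The plan is to verify that the Orlicz-slice space $(E_\Phi^r)_t(\mathbb{R}^n)$ satisfies all the hypotheses of Theorems~\ref{1931} and~\ref{1039} and then simply invoke those theorems. First I would record the elementary identity $[(E_\Phi^r)_t(\mathbb{R}^n)]^{\frac{1}{p}}=(E_{\Phi_p}^{r/p})_t(\mathbb{R}^n)$, where $\Phi_p(\tau):=\Phi(\tau^{\frac{1}{p}})$ for any $\tau\in[0,\infty)$; since $\Phi$ has positive lower type $r^-_\Phi$ and positive upper type $r^+_\Phi$, the function $\Phi_p$ is an Orlicz function with positive lower type $r^-_\Phi/p$ and positive upper type $r^+_\Phi/p$, and, because $p\in[1,\min\{r^-_\Phi,\,r\})$, we have $r^-_\Phi/p>1$ and $r/p>1$. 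Hence, by \cite[Lemma~2.28]{zyyw2019} applied to the pairs $(\Phi,r)$ and $(\Phi_p,r/p)$, both $(E_\Phi^r)_t(\mathbb{R}^n)$ and $[(E_\Phi^r)_t(\mathbb{R}^n)]^{\frac{1}{p}}$ are ball Banach function spaces, which gives assumption~(i) of Theorem~\ref{1931}.

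Next I would address the boundedness of the Hardy--Littlewood maximal operator. From the corresponding result for Orlicz-slice spaces in \cite{zyyw2019}, using $1<r^-_\Phi\le r^+_\Phi<\infty$ and $r\in(1,\infty)$, one obtains that $\mathcal{M}$ is bounded on $(E_\Phi^r)_t(\mathbb{R}^n)$; this will also cover assumption~(iv), which is needed only when $p=1$. To check that $\mathcal{M}$ is bounded on $([(E_\Phi^r)_t(\mathbb{R}^n)]^{\frac{1}{p}})'$, I would first identify this associate space via the description of the Köthe dual of an Orlicz-slice space (as in \cite[Remark~7.41]{zwyy2021}) and then apply the maximal-operator boundedness known there; alternatively, since by Lemma~\ref{2005} the boundedness of $\mathcal{M}$ on $[(E_\Phi^r)_t(\mathbb{R}^n)]^{\frac{1}{p}}$ already follows from its boundedness on the associate space of its convexification, one can route the whole argument through the extrapolation in Lemma~\ref{4.6} and never compute the dual norm explicitly. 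This yields assumption~(iii).

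It then remains to verify assumption~(ii): both $(E_\Phi^r)_t(\mathbb{R}^n)$ and its associate space have absolutely continuous norms. For $(E_\Phi^r)_t(\mathbb{R}^n)$ this follows from the argument in \cite{zyyw2019} (essentially because $\Phi$ has finite upper type and $r<\infty$, so the outer integral has an integrable tail), and for $[(E_\Phi^r)_t(\mathbb{R}^n)]'$ one uses the identification of the Köthe dual with an Orlicz-slice-type space built from the complementary Orlicz function and the conjugate exponent $r'$, together with the same absolute-continuity criterion; here $1<r^-_\Phi\le r^+_\Phi<\infty$ and $1<r<\infty$ guarantee that all the conjugate parameters are again finite and strictly larger than $1$. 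With all four assumptions of Theorem~\ref{1931} in hand, and noting that, for $\Omega$ an $(\varepsilon,\infty)$-domain, these are precisely the assumptions required by Theorem~\ref{1039}, the claimed characterization and the norm equivalence follow at once from Theorems~\ref{1931} and~\ref{1039}.

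The step I expect to be the main obstacle is the precise identification of $[(E_\Phi^r)_t(\mathbb{R}^n)]'$ (and of the associate space of its $\frac{1}{p}$-convexification) in a form to which a known boundedness theorem for $\mathcal{M}$ applies: unlike the weighted, mixed-norm, or variable Lebesgue cases, the Orlicz-slice norm has no closed form and its Köthe dual is not transparently another Orlicz-slice space, so one must either quote the dual description from \cite{zwyy2021} with care or, preferably, bypass it entirely by combining Lemma~\ref{4.6} with Lemma~\ref{2005} so that only the boundedness of $\mathcal{M}$ on $(E_\Phi^r)_t(\mathbb{R}^n)$ and on the associate space of its convexification is actually used.
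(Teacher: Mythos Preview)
Your proposal is correct and follows essentially the same approach as the paper: verify assumptions (i)--(iv) of Theorems~\ref{1931} and~\ref{1039} for $X=(E_\Phi^r)_t$ using the structural results of \cite{zyyw2019}, then invoke those theorems. The paper cites \cite[Lemma~2.28]{zyyw2019} and \cite[Remark~7.41(i)]{zwyy2021} for the ball Banach property, \cite[Theorem~2.26 and Lemma~4.4]{zyyw2019} for the boundedness of $\mathcal{M}$ on both $(E_\Phi^r)_t(\mathbb{R}^n)$ and $([(E_\Phi^r)_t(\mathbb{R}^n)]^{1/p})'$, and \cite[Lemma~4.5]{zyyw2019} for absolute continuity of both norms---exactly the verifications you outline.

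Two small remarks. First, the ``main obstacle'' you anticipate is not one: \cite[Lemma~4.4]{zyyw2019} gives the boundedness of $\mathcal{M}$ on $([(E_\Phi^r)_t(\mathbb{R}^n)]^{1/p})'$ directly, so no explicit identification of the associate space is needed. Second, your proposed alternative via Lemma~\ref{2005} is circular: that lemma \emph{assumes} $\mathcal{M}$ is bounded on $[X^{1/p}]'$ and deduces boundedness on $X$, so it cannot be used to bypass assumption~(iii). This is only a side remark in your write-up, but you should drop it.
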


\begin{proof}
Notice that
$[(E_\Phi^r)_t(\mathbb{R}^n)]^\frac{1}{p}
=(E_{\Phi_p}^\frac{r}{p})_t(\mathbb{R}^n)$,
where $\Phi_p(t):=\Phi(t^{\frac{1}{p}})$
for any $t\in[0,\infty)$.
From this, $p\in[1,\min\{r^-_{\Phi},\,r\})$,
\cite[Lemma~2.28]{zyyw2019},
and \cite[Remark~7.41(i)]{zwyy2021},
we deduce that both $(E_\Phi^r)_t(\mathbb{R}^n)$
and $[(E_\Phi^r)_t(\mathbb{R}^n)]^\frac{1}{p}$
are ball Banach function spaces.
By \cite[Theorem~2.26 and Lemma~4.4]{zyyw2019},
we conclude that
the Hardy--Littlewood maximal operator $\mathcal{M}$ is
bounded on both $(E_\Phi^r)_t(\mathbb{R}^n)$ and
$([(E_\Phi^r)_t(\mathbb{R}^n)]^\frac{1}{p})'$.
From \cite[Lemma~4.5]{zyyw2019},
we infer that both $(E_\Phi^r)_t(\mathbb{R}^n)$
and $[(E_\Phi^r)_t(\mathbb{R}^n)]'$
have absolutely continuous norms.
Using these and Theorems~\ref{1931} and~\ref{1039},
we obtain the desired conclusions and hence
complete the proof of Theorem~\ref{2055}.
\end{proof}

\begin{remark}
\begin{enumerate}
\item[\textup{(i)}]
To the best of our knowledge,
Theorem~\ref{2055} is completely new.
\item[\textup{(ii)}]
Let $\Phi$ be an Orlicz function with both
positive lower type $r^-_{\Phi}$
and positive upper type $r^+_\Phi$
such that $1=r^-_{\Phi}\leq r^+_{\Phi}<\infty$.
In this case, since $[(E_\Phi^r)_t(\Omega)]'$
does not have an absolutely continuous norm,
it is still
unclear whether or not Theorem~\ref{2055}
in this case holds true.
\end{enumerate}
\end{remark}

\noindent\textbf{Acknowledgements}\quad
Chenfeng Zhu would like to thank Yangyang Zhang
and Yinqin Li for some useful discussions and suggestions
on the subject of this article. Dachun Yang would
like to thank Professor Emiel Lorist and Professor
Zoe Nieraeth for some helpful discussions
on Banach function spaces.

\bigskip

\noindent  Chenfeng Zhu, Dachun Yang (Corresponding author) and Wen Yuan

\medskip

\noindent Laboratory of Mathematics
and Complex Systems (Ministry of Education of China),
School of Mathematical Sciences,
Beijing Normal University,
Beijing 100875, The People's Republic of China

\smallskip

\noindent{\it E-mails:} \texttt{cfzhu@mail.bnu.edu.cn} (C. Zhu)

\noindent\phantom{{\it E-mails:} }\texttt{dcyang@bnu.edu.cn} (D. Yang)

\noindent\phantom{{\it E-mails:} }\texttt{wenyuan@bnu.edu.cn} (W. Yuan)

\end{document}